\documentclass{article}
\usepackage[utf8]{inputenc}
\usepackage{amssymb}
\usepackage{amsmath}
\usepackage{enumerate}
\usepackage{amsmath,amssymb}
\usepackage[utf8]{inputenc}
\usepackage{fourier, heuristica}
\usepackage{mathtools, nccmath}
\usepackage{mathtools}
\usepackage[english]{babel}
\usepackage{amsthm}
\usepackage{systeme}
\usepackage{geometry}[margin=1in]
\usepackage{authblk}
\newtheorem{theorem}{Theorem}[section]
\newtheorem{corollary}{Corollary}[theorem]
\newtheorem{lemma}[theorem]{Lemma}
\newtheorem{definition}[theorem]{Definition}
\newtheorem{ques}[theorem]{Question}
\newtheorem{prop}[theorem]{Proposition}

\newtheorem*{theorem*}{Theorem}

\usepackage{graphicx}
\usepackage{relsize}
\usepackage{tcolorbox}
\usepackage{tikz-cd}
\usepackage[all]{xy}
\usepackage{url}
\numberwithin{equation}{section}
\usepackage{eqparbox}

\usepackage{hyperref, cleveref}

\usepackage{hyperref}
\hypersetup{
    colorlinks=true,
    linkcolor=blue,
    filecolor=magenta,      
    urlcolor=cyan,
    pdftitle={Overleaf Example},
    pdfpagemode=FullScreen,
    }

\urlstyle{same}

\makeatletter
\newcommand{\address}[1]{\gdef\@address{#1}}
\newcommand{\email}[1]{\gdef\@email{\url{#1}}}
\newcommand{\@endstuff}{\par\vspace{\baselineskip}\noindent\small
\begin{tabular}{@{}l}\scshape\@address\\\textit{E-mail address:} \@email\end{tabular}}
\AtEndDocument{\@endstuff}
\makeatother
\title{Separation of horocycle orbits on moduli space in genus 2}
\author{John Rached}
\date{}
\address{Department of Mathematics and Statistics,
Binghamton University. Binghamton, New York, United States}
\email{jabourached@math.binghmaton.edu}

\begin{document}
\maketitle

\begin{abstract}
We prove a quantitative closing lemma for the horocycle flow induced by the $\mathrm{SL}(2,\mathbb{R})$-action on the moduli space of Abelian differentials with a double-order zero on surfaces of genus 2. The proof proceeds via construction of a Margulis function measuring the discretized fractal dimension of separation of a horocycle orbit of a point from itself, in a direction transverse to the $\mathrm{SL}(2,\mathbb{R})$-orbit. From this, we deduce that small transversal separation guarantees the existence of a nearby point with a pseudo-Anosov in its Veech group. This is reminiscent of the initial dimension phases in Bourgain-Gamburd for random walks on compact groups, Bourgain-Lindenstrauss-Furman-Mozes for quantitative equidistribution in tori, and quantitative equidistribution of horocycle flow for a product of $\mathrm{SL}(2,\mathbb{R})$ with itself due to Lindenstrauss-Mohammadi-Wang, and multiple other works. 
\end{abstract}

\section{Introduction}

Dynamical closing lemmas have been investigated for several decades, in a broad variety of contexts. In general terms, "closing lemma" refers to perturbing an initial point in a dynamical system, whose orbit is recurrent to a compact set, to find a point of small distance from the initial point that lies on a periodic orbit. "Quantitative closing lemma" generally indicates a bound in terms of the time parameter on the period of the closed orbit achieved. Two results that have inspired substantial research activity are Pugh's closing lemma for $C^1$-diffeomorphisms of compact smooth manifolds \cite{fc49ac35-5adf-3bc9-b971-cec3f7c4e0a5}, and Anosov's closing lemma \cite{katok1995introduction} for hyperbolic sets of a flow. An open line of research in the direction of the former is to achieve such a closing lemma for higher orders of smoothness than $C^{1}$ \cite{article}. This was resolved for the class of Hamiltonian diffeomorphisms of closed surfaces by Asaoka and Irie \cite{article2}. A quantitative result in this spirit, for Reeb orbits on contact 3-manifolds, was recently shown by Hutchings \cite{hutchings2024elementary}.\\

In Teichm{\"u}ller dynamics, the diagonal flow induced by the $\mathrm{SL}(2,\mathbb{R})$-action on the Hodge bundle over moduli space projects to geodesics for the Teichm{\"u}ller metric on moduli space. This action is uniformly hyperbolic (Anosov) on compact sets; Hamendst{\"a}dt \cite{article3} and Eskin-Mirzakhani-Rafi \cite{eskin2019counting} have independently proven closing lemmas for this flow. A version of this lemma for affine invariant submanifolds of strata in the Hodge bundle is used by Wright in \cite{wright2014field}, to prove that such manifolds are defined over number fields. A similar version of this lemma is invoked in \cite{Filip2014ZeroLE} to show that the Zariski closure of the monodromy of the Kontsevich-Zorich cocycle restricted to the tangent bundle of an affine invariant manifold is the full group of endomorphisms preserving the symplectic form on the manifold. \\

Our concern in this article is with the action of a different one-parameter subgroup of $\mathrm{SL}(2,\mathbb{R})$ acting on a stratum of the Hodge bundle, namely, the action of the unipotent $U = \left\{u_s \ | \ s \in \mathbb{R}\right\}$, where $u_s = \begin{pmatrix}
1 & s \\
0 & 1
\end{pmatrix}$. The $u_s$-action remains poorly understood, in contrast to the full $\mathrm{SL}(2,\mathbb{R})$-action, where a complete classification of invariant and stationary measures was obtained by Eskin-Mirzkakhani \cite{eskin2018invariant}. Full $\mathrm{SL}(2,\mathbb{R})$-orbit closures are known to be affine invariant submanifolds of a stratum of the Hodge bundle \cite{eskin2015isolation}, and are in fact algebraic varieties \cite{filip2016splitting}. On the other hand, $u_s$-orbit closures can have fractional Hausdorff dimensions \cite{Chaika2020TremorsAH} and exhibit other features distinctive from the homogeneous theory \cite{chaika2023space}. Nevertheless, we show that in the single double-zero setting in genus 2, small transversal separation of a geodesic push of a horocycle orbit guarantees proximity to a periodic orbit, establishing a resonance in moduli space for the "initial dimension phase" in Bourgain-Furman-Lindenstrauss-Mozes \cite{bourgain2011stationary}, Lindenstrauss-Margulis \cite{lindenstrauss2014effective}, Yang \cite{Yang} and Lindenstrauss- \\ Mohammadi-Wang \cite{lindenstrauss2022effectiveunipotent}. The dichotomy between large dimension of transversal separation and nearly lying on a periodic orbit was used in the aforementioned works to establish quantitative equidistribution results in those settings; this is our original motivation as well.

\subsection{Overview}

For $G$ a connected Lie group, and $\Gamma$ a lattice, Ratner's seminal theorems \cite{Ratner}, \cite{Ratner2}, \cite{Ratner3} provide a classification for orbits of all points in $G/\Gamma$ under the action of a subgroup of $G$ generated by unipotent elements. Ratner's theorems rely on the pointwise ergodic theorem, making these results difficult to effectivize in most cases. Green and Tao prove equidistribution for nilflows with polynomial error rates \cite{green2012quantitative}. When the unipotent subgroup is horospherical, as in the case of the horocycle flow on $\mathrm{SL}(2,\mathbb{R})/\mathrm{SL}(2,\mathbb{Z})$, effective equidistribution for long unipotent orbits is known, and in the case of $\mathrm{SL}(2,\mathbb{R})/\mathrm{SL}(2,\mathbb{Z})$ can be deduced from effective ergodicity of the horocycle flow as discussed in an original work of Ratner \cite{Ratner4}. Recently, there has been activity in search of effective accounts of equidistribution of both flows and closed unipotent orbits beyond the horospherical and nilflow settings. One starting point of this was with the work of Einsiedler, Margulis and Venkatesh who achieve effective equidistribution of closed unipotent orbits with polynomial error rate in a general semisimple homogeneous (arithmetic) setting \cite{einsiedler2009effective}. The case of effective equidistribution with polynomial error rates for unipotent orbits on arithmetic quotients $\mathrm{SL}(2,\mathbb{R}) \times \mathrm{SL}(2,\mathbb{R})$ was also done recently by Lindenstrauss, Mohammadi and Wang \cite{lindenstrauss2022polynomial}. Using different methods, Str$\"{o}$mbergsson obtained polynomial effective equidistribution for a nonhorospherical action on the quotient of $\mathrm{SL}(2,\mathbb{R})\ltimes \mathbb{R}^2$ by $\mathrm{SL}(2,\mathbb{Z}) \ltimes \mathbb{Z}^2$ \cite{strombergsson2015effective}. \\

The results above leverage arithmeticity of the lattice $\Gamma$ in a crucial way. The overarching idea is to establish analogs of Liouville's theorem, which states roughly that irrational algebraic numbers are badly approximable by rational numbers. We provide some details to illustrate this concept at work in the setting of Bourgain-Furman-Lindenstrauss-Mozes. Let $\Gamma$ be a semigroup of $d \times d$ non-singular integer matrices. If the natural action on the torus $\mathbb{T}^d$ is strongly irreducible, then the action of $\Gamma$ is ergodic with respect to Haar measure. In the case $d=1$ and $\Gamma$ is Abelian, Furstenberg \cite{Furstenberg1} showed that provided $\Gamma$ contains no finite index cyclic groups, $\Gamma \cdot x$ is dense for all $x \in \mathbb{R} \setminus \mathbb{Q}$.  Bourgain, Furman, Lindenstrauss and Mozes \cite{bourgain2011stationary} considered the situation, where, along with other technical assumptions, the natural action of $\Gamma$ on $\mathbb{R}^d$ is assumed to be strongly irreducible (this is strictly stronger than the assumption that $\Gamma$ acting on $\mathbb{T}^d$ is strongly irreducible). When $\Gamma$ acts strongly irreducibly on $\mathbb{R}^d$, and $\nu$ is a probability measure on $\Gamma$, the top Lyapunov exponent $\lambda_1(\nu) = \lim\limits_{n \mapsto \infty} \frac{1}{n}\mathrm{log} \ \lvert\lvert g_1g_2 \cdot \cdot \cdot g_n \rvert\rvert$ is $\nu$-almost surely positive. Given a probability measure $\nu$ on $\Gamma$ and a probability measure $\mu$ on $\mathbb{T}^d$, one can define a probability measure on $\mathbb{T}^d$ by $\nu \ast \mu = \sum\limits_{g \in \Gamma} \nu(g) g_{*} \mu$. Under some additional mild assumptions, including a moment condition on $\nu$, Bourgain-Furman-Lindenstrauss-Mozes \cite{bourgain2011stationary} prove that if $\nu$ is a fixed probability measure supported on $\Gamma < \mathrm{SL}_d(\mathbb{R})$, then for any $0 < \lambda < \lambda_1$, there is a constant $C = C(\nu,\lambda)$, so that if for a point $x \in \mathbb{T}^d$ the measure $\mu_n = \nu^{(n)}\ast \delta_x$ satisfies that for some $a \in \mathbb{Z}^d \setminus \left\{0\right\}$

\begin{displaymath}
\bigl \lvert \hat{\mu}_n(a) > t > 0 \bigr \rvert \ \ \textrm{with} \ \ n > C \cdot \mathrm{log}\frac{2\lvert\lvert a \rvert \rvert}{t},
\end{displaymath}
then $x$ admits a rational approximation $\frac{p}{q}$ with $p \in \mathbb{Z}^d$ and $q \in \mathbb{Z}^{+}$ satisfying

\begin{displaymath}
    \Bigl \lvert \Bigl \lvert x - \frac{p}{q} \Bigr \rvert \Bigr \rvert < e^{-\lambda n} \ \ \mathrm{and} \ \lvert q \rvert < \left(\frac{2\lvert\lvert a \rvert \rvert}{t}\right)^C.
\end{displaymath}
A key step in the proof of this theorem is to start with an absolute lower bound on a single Fourier coefficient of the measure $\mu_n = \nu^{\ast(n)}\ast \delta_x$ and further conclude for a chosen $m < n$, $\mu_{n-m}$ has a "rich" set of Fourier coefficients larger than a polynomial in $t$. Starting with some $a_0 \in \mathbb{Z}^d \big\backslash \left\{0\right\}$ with $\lvert \hat{\mu}_n(a_0) > t_0$ for large enough $n$, the idea is to prove that for $t = t_0^p$, the set 

\begin{displaymath}
A_{n-m_1,t} = \left\{a \in \mathbb{Z}^d: \ \bigl \lvert \hat{\mu}_{n-m_1}(a) \bigr \rvert > t \right\}
\end{displaymath}
is a thick set in $\mathbb{Z}^d$. For a subset $E \subset \mathbb{Z}^d$, let $\mathcal{N}(E;R)$ denote the covering number of $E$ by balls of radius $R$. The following is the crucial step.

\begin{lemma}[B-F-L-M \cite{bourgain2011stationary}]
\label{lemma1.1}
There exists a large $N$ and an exponentially smaller $R$, so that the number of $R$-balls needed to cover the intersection of a rich set of Fourier coefficients with a large box is greater than a polynomial. Namely, $\mathcal{N} \left( A_{m-n_1,t_0^{p}} \cap [-N,N]^{d} ; R\right) > t_0^{p}\left(N/R\right)^{d}$.
\end{lemma}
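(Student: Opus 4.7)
The plan is to propagate the single-coefficient lower bound $|\hat\mu_n(a_0)| > t_0$ downward through the random walk using the convolution identity, then convert the resulting mass of large Fourier coefficients into a covering-number lower bound via quantitative non-concentration of the random walk on $\mathbb{R}^d$ and on $\mathbb{P}^{d-1}$.

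First I would write $\mu_n = \nu^{*(m_1)} * \mu_{n-m_1}$ and Fourier transform to get
$$\hat\mu_n(a_0) = \sum_{g}\nu^{*(m_1)}(g)\,\hat\mu_{n-m_1}(g^{T}a_0).$$
Because $|\hat\mu_n(a_0)| > t_0$, a dyadic pigeonhole on the magnitudes $|\hat\mu_{n-m_1}(g^{T}a_0)|$ produces, for a suitable polynomial exponent $p$, a subset $G\subset \mathrm{supp}(\nu^{*(m_1)})$ of $\nu^{*(m_1)}$-mass at least $\gtrsim t_0^{p}$ on which $|\hat\mu_{n-m_1}(g^{T}a_0)|>t_0^{p}$. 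Every such $g^{T}a_0$ is then automatically an element of $A_{n-m_1,t_0^{p}}$, so the problem reduces to showing that the image set $\{g^{T}a_0:g\in G\}$ fills $[-N,N]^{d}$ at scale $R$ with at least $t_0^{p}(N/R)^{d}$ distinct $R$-balls touched.

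Next I would calibrate $m_1$ so that $e^{\lambda_1 m_1}\sim N$, and invoke two standard properties of products of random matrices under the moment condition and strong irreducibility hypotheses carried over from \cite{bourgain2011stationary}. Large deviations for $\log\|g^{T}\|$ ensures that outside a $\nu^{*(m_1)}$-polynomially small exceptional set, $\|g^{T}a_0\|\approx N$, so the images $g^{T}a_0$ lie essentially inside $[-N,N]^{d}$. Guivarc'h-type projective regularity controls the distribution of the directions $g^{T}a_0/\|g^{T}a_0\|$ on $\mathbb{P}^{d-1}$: the $\nu^{*(m_1)}$-mass lying in any ball of radius $R/N$ on $\mathbb{P}^{d-1}$ is bounded by a polynomial in $R/N$ with only polynomial loss in $t_0$. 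Transferring this to $\mathbb{R}^d$, the mass of $G$ landing in any single $R$-ball inside $[-N,N]^{d}$ is at most $\lesssim (R/N)^{d}$.

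Combining the two previous steps: $G$ carries $\nu^{*(m_1)}$-mass $\gtrsim t_0^{p}$, and each $R$-ball in $[-N,N]^{d}$ receives at most $\lesssim (R/N)^{d}$ of this mass. Therefore the number of $R$-balls needed to cover $\{g^{T}a_0:g\in G\}\subset A_{n-m_1,t_0^{p}}\cap [-N,N]^{d}$ is at least a constant multiple of $t_0^{p}(N/R)^{d}$, yielding the claimed bound after adjusting the polynomial exponent $p$.

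The main obstacle is the quantitative projective non-concentration at the small scale $R/N$, which is the technical heart of \cite{bourgain2011stationary}; it requires Guivarc'h's regularity of the stationary measure on $\mathbb{P}^{d-1}$ upgraded to finite $n$ with only polynomial loss, and it is here that strong irreducibility and proximality of the action on $\mathbb{R}^{d}$, together with the moment hypothesis on $\nu$, are used essentially. Once that estimate is available at scale $R/N$, steps one and three are essentially pigeonhole and a union bound.
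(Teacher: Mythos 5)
The paper does not actually prove Lemma~\ref{lemma1.1}; it is stated with a citation to \cite{bourgain2011stationary} and serves only as motivation for the strategy pursued in the rest of the article. So there is no in-paper proof to compare against; one can only ask whether your reconstruction plausibly tracks the actual BFLM argument.

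Your proposal correctly identifies the skeleton of the BFLM granulation step: the convolution identity $\hat\mu_n(a_0)=\sum_g \nu^{*(m_1)}(g)\,\hat\mu_{n-m_1}(g^{T}a_0)$, the dyadic pigeonhole producing a mass-$\gtrsim t_0^{p}$ subset $G$ with $|\hat\mu_{n-m_1}(g^Ta_0)|>t_0^p$, the calibration $e^{\lambda_1 m_1}\sim N$, and the use of large deviations and Guivarc'h-type projective regularity. These are indeed the tools in \cite{bourgain2011stationary}.

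However, there is a genuine gap in the step where you ``transfer'' projective non-concentration on $\mathbb{P}^{d-1}$ to the mass bound $\lesssim (R/N)^{d}$ in a Euclidean $R$-ball. The pushforward of $\nu^{*(m_1)}$ under $g\mapsto g^{T}a_0$ is not spread out in $d$ dimensions at the scale $N$: by the large deviation estimate you invoke, the norms $\|g^T a_0\|$ concentrate near $N$ up to lower-order multiplicative fluctuations (any attempt to widen the radial spread to order $N$ costs super-polynomially in $t_0$). The image set therefore lives, up to polynomial mass loss, in a thin shell around a sphere of radius $\approx N$, and Guivarc'h regularity at best controls the angular distribution on $\mathbb{P}^{d-1}$ with some exponent $\alpha$ that is strictly less than $d$ (and in general less than $d-1$, since the Furstenberg measure is typically singular). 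From a single convolution step one can only deduce a covering number lower bound of the form $t_0^{p}(N/R)^{\alpha}$ with $\alpha<d$ --- which is precisely the ``small-dimensional separation'' the paper alludes to immediately after the lemma. Reaching the full exponent $d$ in the covering bound requires the subsequent bootstrapping (iterating the decomposition across scales and dimensions), which is outside a single application of the argument you sketch. As stated, then, your argument does not yield the conclusion with exponent $d$; you would either need to relax the claimed exponent or incorporate the recursion.
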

By an analog of Wiener's lemma, Lemma $\ref{lemma1.1}$, gives a "small-dimensional" separation of points on the torus. This information is bootstrapped to reach a dimension of separation equal to the dimension of the torus.\\

Returning to the Lie group setting, this general scheme of achieving a small initial dimension of separation and boostrapping the dimension is successfully employed by Lindenstrauss-Mohammadi and Lindenstrauss-Mohammadi-Wang to achieve polynomially effective density and polynomially effective equidistribution of one-parameter unipotent flows in arithmetic quotients of $\mathrm{SL}(2,\mathbb{C})$ and $\mathrm{SL}(2,\mathbb{R}) \times \mathrm{SL}(2,\mathbb{R})$, and polynomial rates for the Oppenheim conjecture \cite{lindenstrauss2022effectiveunipotent}, \cite{lindenstrauss2022polynomial1}, \cite{lindenstrauss2023effective}. In these works, the authors prove quantitative closing lemmas (for instance, Proposition 4.8 in \cite{lindenstrauss2022effectiveunipotent}) which are themselves similar to Lemma 5.2 in Lindenstrauss-Margulis \cite{lindenstrauss2014effective}. In \cite{lindenstrauss2022effectiveunipotent}, \cite{lindenstrauss2022polynomial1}, \cite{lindenstrauss2023effective}, these closing lemmas are used to produce a small transversal dimension of separation in the unipotent direction, for unipotent orbits not too close to a periodic orbit. This serves as the analogue of Lemma \ref{lemma1.1}, and this dimension is bootstrapped to achieve dimension close to that of a horospherical subgroup, for which there exist methods to deduce effective density and effective equidistribution. We proceed to state our main theorem, which is a direct analog of Proposition 4.8 in \cite{lindenstrauss2022effectiveunipotent}, a direct analog of Proposition 13.1 in \cite{einsiedler2009effective} and similar in spirit to Lemma $\ref{lemma1.1}$. \\

Let $S$ = $S_{g}$ be a surface of genus $g$, and let $\mathcal{M}_{g}$ be the moduli space of Riemann surfaces homeomorphic to $S$. Let $\Omega\mathcal{M}_{g}$ be the space of Abelian differentials (holomorphic 1-forms $\omega$). Let $\Omega_1\mathcal{M}_g \subset \Omega\mathcal{M}_g$ be the unit-area locus of $\Omega\mathcal{M}_g$. There is an $\mathrm{SL}(2,\mathbb{R})$-action on $\Omega_1\mathcal{M}_{g}$ given by the action of $\mathrm{SL}(2,\mathbb{R})$ on $\mathbb{R}^2$ corresponding to "multiplication by $A \in \mathrm{SL}(2,\mathbb{R})$" on the atlas of charts to $\mathbb{R}^2$ determined by the quadratic differential. By Teichm{\"u}ller theory, the orbits of the diagonal flow $a_t = \begin{bmatrix}
e^t & 0\\
0 & e^{-t}
\end{bmatrix}$ project to Teichm{\"u}ller geodesics under the natural projection map $\pi:\Omega_1\mathcal{M}_{g} \rightarrow \mathcal{M}_{g}$.  $\Omega\mathcal{M}_{g}$ has a natural stratification: we say $(X,\omega) \in \Omega\mathcal{M}_{g}$ is of type $ \sigma =\left(p_1,...,p_k\right)$ if $\omega$ has zeroes of order $\left\{p_i\right\}$. The space of all Abelian differentials of type $\sigma = \left(p_1,...,p_k\right)$ in $\Omega\mathcal{M}_{g}$ will be denoted by $\Omega\mathcal{M}_{g}\left(\sigma\right)$. This is a complex-analytic orbifold of real dimension $4g + 2k - 1$. It also has the structure of a complex algebraic variety. Let $\mathcal{T}_g$ be the Teichm{\"u}ller space, the orbifold universal cover of $\mathcal{M}_g$. Let $\Omega\mathcal{T}_g$ and $\Omega_1\mathcal{T}_g$ be defined similarly as above. We have an orbifold covering map:

\begin{displaymath}
\pi:\Omega_1\mathcal{T}_g\left(\sigma\right) \rightarrow \Omega_1\mathcal{M}_g\left(\sigma\right).
\end{displaymath}
Moreover, if $V \subset \Omega_1\mathcal{M}_g\left(\sigma\right)$ is a properly immersed manifold, the pre-image $\pi^{-1}(V)$ is a properly immersed manifold of the same dimension. The lowest dimensional $\mathrm{SL}(2,\mathbb{R})$-invariant manifolds that can be cut out locally by real-linear equations are of particular relevance to us. Specifically, we are interested in the orbit closures of $\pi\left(\mathrm{SL}(2,\mathbb{R})\cdot \omega\right)$ for special choices of $(X,\omega) \in \Omega_1\mathcal{T}_g\left(\sigma\right)$. For $\mathrm{Mod}(S)$ the mapping class group of $S = S_g$, it follows from a classical result of of Smillie-Weiss \cite{smillie2004minimal} that if $\Gamma_{\omega} \leq \mathrm{Mod}(S)$ is the set-wise stabilizer of the lift of $\pi\left(\mathrm{SL}(2,\mathbb{R})\cdot \omega\right)$ to the connected component $\widetilde{{\pi^{-1} \circ {\pi\left(\mathrm{SL}(2,\mathbb{R})\cdot \omega\right)}}}$ of
$\Omega_1\mathcal{T}_g\left(\sigma\right)$ containing $(X,\omega)$, then

\begin{displaymath}
\widetilde{{\pi^{-1}\circ{\pi\left(\mathrm{SL}(2,\mathbb{R})\cdot \omega\right)}}} =  \Gamma_w \cdot \left(\overline{\mathrm{SL}(2,\mathbb{R})\cdot \omega}\right)
\end{displaymath}
when $(X,\omega)$ has lattice-stabilizer. In this case, $(X,\omega)$ is referred to as a $\textit{Veech}$ surface. Veech surfaces have special dynamical properties. In particular, for such $\omega$, $\pi\left(\mathrm{SL}(2,\mathbb{R})\cdot \omega\right)$ is a closed orbit. Furthermore, the image of $\mathrm{SL}(2,\mathbb{R})\cdot \omega$ under

\begin{displaymath}
\Omega_1\mathcal{T}_g\left(\sigma\right) \xrightarrow{\pi} \Omega_1\mathcal{M}_g\left(\sigma\right) \rightarrow \mathcal{M}_g
\end{displaymath}
is an algebraic curve $V$ in $\mathcal{M}_g(S)$, called a Teichm{\"u}ller curve. Let $e^{-0.01t} < \beta = e^{-\kappa t} < 1$ for a fixed $\kappa = 1/D > 0$  where $D$ will be specified below. For every $t \geq 0$, define

\begin{displaymath}
E_{t} = B_{\beta} \cdot a_{t} \cdot \left\{ u_r: r \in [0,1] \right\} \subset \mathrm{SL}(2,\mathbb{R})
\end{displaymath}
where $B_{\beta}:= \left\{u_s^{\intercal}: |s| \leq \beta \right\} \cdot \left\{a_t: |t| \leq \beta \right\}$ and $u_s^{\intercal}$ is the transpose of $u_s$. In words, this is a "smearing" by a small amount of $\mathrm{SL}(2,\mathbb{R})$ of a geodesic push of length $t$ of a length-1 horocycle segment. The principal reason for stating our separation theorem with a smearing approximation of geodesic translates of unit-length horocycle orbits is for potential application to quantitative equidistribution theorems. For a concise discussion of this, see Section 6 of $\cite{lindenstrauss2022polynomial}$.\\

The following Theorem shows that, up to removing an exceptional set of very small measure, geodesic pushes of length-1 horocycle segments are separated in a direction tranverse to the $\mathrm{SL}(2,\mathbb{R})$-orbit of a point (up to "smearing"), so long as the orbit of this point doesn't come too close to a Teichm{\"u}ller curve, with a quantitative relationship between the proximity to the Teichm{\"u}ller curve and the size of a hyperbolic element the curve contains in its Veech group. We state our Main Theorem.

\begin{theorem}
\label{main_theorem}
    Let $\Omega_1\mathcal{M}_2(2)$ be the stratum of unit-area Abelian differentials of genus 2 with a single zero. There exist $D_0 \geq 0, \alpha$ depending only on the stratum, satisfying the following. Suppose $D \geq D_0 + 1$. For any $x = (X,\omega) \in \Omega_1\mathcal{M}_2(2)$, there exists $t'$ depending only on the injectivity radius of $x$ such that for all $t \geq t'$, there exists a function $f_t: E_{t}\cdot x \rightarrow (0,\infty)$ so that at least one of the following is true.

    \begin{enumerate}
    \item There is a subset $I \subset [0,1] $ with $  \bigl \lvert [0,1] \big \backslash I \bigr \rvert \ll \beta^{\alpha}$ such that for all $s \in I$ we have the following
    \begin{enumerate}
    \item $\mathrm{inj}(a_{8t}u_sx) \geq \beta$,
    \item $h \rightarrow h \cdot a_{8t} u_s x$ is an injective map $E_t \mapsto \Omega_1\mathcal{M}_2(2)$,
    \item for all $z \in E_t \cdot a_{8t}u_sx$, we have
    \begin{displaymath}
    f_{t}(z) \leq e^{Dt}.
    \end{displaymath}
    \end{enumerate}
    \item There is $x'=(X',\omega') \in \Omega_1\mathcal{M}_2(2)$ such that $x'$ lies on a Teichm{\"u}ller curve $V$ and \\ $d_{\mathcal{T}}\left(z',x'\right) \leq e^{(-D+D_0)t}$ for some $z' \in a_{8t}u_{s'}\cdot x$, where $d_{\mathcal{T}}(\cdot,\cdot)$ is the Teichm{\"u}ller distance and $s' \in [0,1]$. The Veech group of $x'$ contains a hyperbolic element $\gamma$ with $\mathrm{log}(l(\gamma)) \leq e^{Dt}$, where $l(\gamma)$ is the translation length.
    \end{enumerate}
\end{theorem}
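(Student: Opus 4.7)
The strategy follows the Margulis-function template of Lindenstrauss-Margulis \cite{lindenstrauss2014effective} and Lindenstrauss-Mohammadi-Wang \cite{lindenstrauss2022effectiveunipotent}, adapted to the non-homogeneous setting of the stratum $\Omega_1\mathcal{M}_2(2)$. The plan is to construct a function $f_t$ that counts, with polynomial weights, self-encounters of the smeared geodesic-pushed horocycle segment in directions transverse to the $\mathrm{SL}(2,\mathbb{R})$-orbit, prove a drift-type inequality for $f_t$ under the flow $u_s$, and deduce a dichotomy: either $f_t$ is polynomially bounded (yielding case (1) via Chebyshev), or the drift inequality is obstructed by a nearby Teichm\"uller curve (yielding case (2)).

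Concretely, for $z$ in a neighborhood of the horocycle push and the scale $\rho = e^{-Dt}$, I would define
\[
f_t(z) = \sum_{h} \frac{1}{\|h\|^{\alpha}},
\]
the sum running over nontrivial $h$ in a transverse slice to $\mathrm{SL}(2,\mathbb{R})$ with $\|h\| \leq \rho$ for which $h \cdot z$ again lies on the orbit $E_t \cdot x$. The exponent $\alpha$, depending only on the stratum, is fixed so that the analog of the Margulis inequality closes: applying $a_t u_s$ conjugates the transverse slice via its Jacobian, the hyperbolicity of $a_t$ contracts transversal distances exponentially, and the unipotent flow distributes mass. Combining this with Eskin-Mirzakhani measure-classification input and Eskin-Masur/Athreya non-escape of mass on the stratum, one expects an integral bound of the form
\[
\int_{0}^{1} f_t(a_{8t} u_s x)\, ds \leq e^{(D-\varepsilon)t}
\]
for some $\varepsilon > 0$, conditional on the orbit not coming $e^{(-D+D_0)t}$-close to a Teichm\"uller curve.

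Chebyshev's inequality applied to this integral produces a set $I \subset [0,1]$ of complementary measure $\ll \beta^{\alpha}$ on which $f_t(z) \leq e^{Dt}$ for every $z \in E_t \cdot a_{8t} u_s x$. The injectivity-radius bound (1a) and the injectivity of $h \mapsto h \cdot a_{8t} u_s x$ in (1b) follow from the same construction, since failure of either is witnessed by a near-coincidence counted in $f_t$ at scale $\beta$ or smaller, together with standard non-divergence for the unipotent flow. This yields case (1) whenever the conditional integral bound is available. When the drift inequality fails, the obstruction must be a genuine transversal coincidence $h \cdot z_1 = z_2$ with $\|h\| \leq e^{-Dt}$ not damped by the $a_{8t}$-contraction; in period coordinates on $\Omega_1\mathcal{M}_2(2)$, such a coincidence corresponds to a small element of relative cohomology fixed, up to a hyperbolic element of $\mathrm{SL}(2,\mathbb{R})$, by an element $\gamma$ of the mapping class group. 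The fixed locus of $\gamma$ is an affine invariant submanifold, which in the genus-2, single-zero setting is, by McMullen's classification, either the full stratum, an eigenform locus, or a Teichm\"uller curve. Only Teichm\"uller curves produce the sharp transversal rigidity demanded by the coincidence, yielding a nearby $x' = (X',\omega')$ within Teichm\"uller distance $e^{(-D+D_0)t}$ whose Veech group contains $\gamma$; the translation length bound $\log \ell(\gamma) \leq e^{Dt}$ follows from the norm of $h$ via standard translation-length-versus-displacement estimates on the hyperbolic plane stabilized by the Veech group of $(X',\omega')$.

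The main obstacle will be the extraction of a controlled hyperbolic element $\gamma$ from the failure of the drift inequality. In the homogeneous setting of \cite{lindenstrauss2022effectiveunipotent} the Zassenhaus lemma and the commutator structure of $\mathrm{SL}(2,\mathbb{R})\times\mathrm{SL}(2,\mathbb{R})$ accomplish this directly, but here no such global group structure is available; the argument must instead proceed via the local affine geometry of period coordinates, McMullen-type rigidity for $g=2$ eigenform and Teichm\"uller loci, and quantitative volume estimates for small balls in the transverse slice. Pinning down $D_0$ uniformly over the stratum (including in the thin part) and verifying that the element $\gamma$ extracted is genuinely hyperbolic rather than parabolic is expected to be the most delicate technical step.
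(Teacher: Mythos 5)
Your plan captures the right global shape — a Margulis function detecting transversal self-encounters, a dichotomy between polynomial control and proximity to a Teichm\"uller curve, and a genus-2-specific McMullen input at the end — but both the mechanism of the first branch and the extraction of the hyperbolic element diverge from the paper and, in the latter case, leave a genuine gap.

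On the first branch: you propose proving an averaged drift inequality $\int_0^1 f_t(a_{8t}u_s x)\,ds \leq e^{(D-\varepsilon)t}$ conditional on staying away from Teichm\"uller curves, then applying Chebyshev. The paper never proves any such averaged or drift inequality for $f_t$. Instead, it controls (1a) and (1b) directly via Minsky--Weiss quantitative non-divergence (Theorem 3.2 / Lemma 3.3) and the injectivity-radius estimate (Proposition 4.3), and then proves a pointwise sheet-counting bound, $\#I_{bal,t}(z) \ll e^{6\kappa_4 t}$ (Lemma 4.10), by a volume-packing argument in $\mathrm{SL}(2,\mathbb{R})$ combined with Avila--Gou\"ezel--Yoccoz pushforward estimates and the Kahn--Wright / Su--Zhang norm comparisons. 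With that bound in hand, $f_t(z) > e^{Dt}$ with $D = 6\kappa_4 + 100$ forces a single summand with $\|w\| \ll e^{-100t}$, which is the only pigeonhole the paper needs. Your claim that failure of (1a) or (1b) is ``witnessed by a near-coincidence counted in $f_t$'' is not how the paper argues and does not obviously work: small injectivity radius or non-injectivity of the $E_t$-map is a non-divergence phenomenon, not a transversal self-encounter. Invoking Eskin--Mirzakhani measure classification for the drift bound would be a much heavier hammer than the paper uses and does not by itself give pointwise or averaged bounds conditional on distance to Teichm\"uller curves.

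The substantive gap is the hyperbolic-element extraction, which you yourself flag as ``the most delicate technical step'' without proposing a route. The paper's Section 4.4 does the following: (i) it shows that distinct $s, s'$ in the bad set yield distinct conjugacy classes $[\gamma_s]$ of mapping class group elements, using the explicit matrix computation $u_\tau \mathbf{h}_s u_{-\tau}$ together with the polynomial-sublevel-set bound (Proposition 4.13) to exclude $[\gamma_s]=[\gamma_{s'}]$ except on a set of measure $\ll \beta^{-2\alpha}e^{-3t}$; (ii) this produces $\gg e^{2.9t}$ distinct elements $(\mathbf{p}_s)^{-1}\mathbf{h}_s\mathbf{p}_s$ with norm $\ll e^{9t}$; (iii) a counting argument shows a conjugate of the unipotent subgroup $U$ cannot contain that many elements of bounded norm (comparing $e^{2.9t}$ against the $e^{8t/3}$ bound available in a unipotent group), a contradiction worked out by matching entries of $g \begin{bmatrix}1 & r\\ 0 & 1\end{bmatrix} g^{-1}$; (iv) since elliptics are excluded via the level-structure reduction of Subsection 2.3, ping-pong on two transverse parabolics produces a hyperbolic element; (v) the Anosov-type closing lemma (Theorem 4.9) and Theorem 4.10 (McMullen: a hyperbolic Veech group element in $\Omega_1\mathcal{M}_2(2)$ forces a Teichm\"uller curve) close the argument. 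You instead invoke McMullen's classification of orbit closures / eigenform loci and speak of ``the fixed locus of $\gamma$ is an affine invariant submanifold,'' which conflates the single mapping class group element with the $\mathrm{SL}(2,\mathbb{R})$-invariant set it might stabilize and does not produce a hyperbolic element from a transversal near-coincidence. Without steps (i)--(iv) or a genuine substitute, the second branch of the dichotomy is not reached.
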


We expect Theorem $\ref{main_theorem}$ to fail for all other strata $\Omega_1\mathcal{M}_g(\sigma)$, though we supply no proof in this article. Essentially, this is due to the existence of non-lattice Veech groups in other strata which are infinitely generated, constructed by Hubert and Schmidt \cite{Veech_groups}, and McMullen \cite{infinite_complexity_McMullen}. However, we make the following conjecture, which we believe would serve as a substitute for the "initial dimension phase" for all strata of moduli space, opening the door to adopting the schema described above to establish quantitative equidistribution results for horocycle flow in all strata. In particular, combining Theorem $\ref{main_theorem}$ with results of Sanchez on effective equidistribution of large dimensional measures \cite{sanchez2023effective} would leave only the 'bootstrapping' step of \cite{lindenstrauss2022effectiveunipotent}. Since \cite{sanchez2023effective} applies to all strata, a similar obstacle would remain were the following conjecture to be established.

\begin{ques}
\label{crit_exp}
Does there exist a sequence of Veech groups \ $\Gamma_k \in \Omega_1\mathcal{M}_g(\sigma)$ in any stratum with critical exponents approaching one?
\end{ques}

An affirmative answer to the analogous statement for Kleinian groups has recently been established by \cite{solan2024critical}, with powerful techniques that may be applicable to Teichm{\"u}ller dynamics. \\

The paper is organized as follows. In Section $\ref{prelims}$, we review fundamental facts on the geometry of moduli space, which we rely on to adapt the proofs of homogeneous closing lemmas to this context. In Section \ref{Section3}, we use results of Minsky-Weiss to state quantitative-nondivergence that establishes enough recurrence to compact sets to reduce the statement of Theorem $\ref{main_theorem}$ to a dichotomy. In Section $\ref{Section4}$, we construct a Margulis function that measures the transversal (to the $\mathrm{SL}(2,\mathbb{R})$ direction) separation of an orbit from itself, and show the group of elements in $\mathrm{SL}(2,\mathbb{R})$ bringing an orbit close back to itself in the tranverse direction cannot be unipotent. We conclude there must be a hyperbolic element of $\mathrm{SL}(2,\mathbb{R})$ stabilizing a nearby element, and since we are working in $\Omega_1\mathcal{M}_2(2)$, this element must lie on a Teichm{\"u}ller curve.

\section*{Acknowledgements}
The author thanks Asaf Katz for helpful discussions on homogeneous dynamics, and Alex Eskin, Paul Apisa, Jon Chaika, Alex Wright and Jenya Sapir for insightful comments.

\section{Preliminaries}
\label{prelims}

\subsection{Constants and matrix norms}
\label{subsection_matrix_norm}

We will use the notation $B \ll C$ to mean $B \leq kC$, where $k$ is only allowed to depend on the stratum. For a matrix $M \in \mathrm{SL}(2,\mathbb{R})$, we will denote by $||\cdot||$ the Frobenius norm. Recall the decomposition $ M = KAK$ where $A$ is the diagonal group and $K = SO(2)$ is the maximal compact rotation group. The $KAK$ decomposition coincides with the singular value decomposition $M = U \Sigma V^{\perp}$ up to changing signs of the diagonal entries of the matrices in the middle. We make extensive use of the following elementary facts. For $M_1,M_2 \in \mathrm{SL}(2,\mathbb{R})$, and using $\sigma_{min}(M)$ to denote the smallest singular value of a matrix:

\begin{enumerate}
\item $||M_1|| = ||M^{-1}_1||$
\item $||M_1M_2|| \leq ||M_1||\cdot ||M_2||$
\item $||M_1M_2|| \geq \sigma_{min}(M_1) \cdot ||M_2||$.
\end{enumerate}

\subsection{Teichm{\"u}ller metric and Teichm{\"u}ller curves}
\label{Section2.1}

In this subsection, we briefly review the metric structures we will be interested in, and the dynamics of Veech surfaces. All of the material here is standard, and we will sometimes follow $\cite{mcmullen2003billiards}$.
Let $S_{g}$ be a surface of genus $g$, satisfying $3g-3 > 0$. Let $\phi_1:S_g \rightarrow S_1$, $\phi_2:S_g \rightarrow S_2$, be two diffeomorphisms, where $S_1$ and $S_2$ have finite area. Denote by $\sim$ the equivalence relation
where $\phi_1 \sim \phi_2$ if $\phi_1 \circ \phi^{-1}_2$ is isotopic to a holomorphic diffeomorphism. Define

\begin{displaymath}
\mathcal{T}_{g} = \big\{\phi:S_{g} \rightarrow S \ | \ \text{S is finite area }, \phi \ \text{a diffeomorphism}\big\} \big/ \sim.
\end{displaymath}
Given $\psi \in \text{Diff}^{+}\left(S_{g}\right)$, the group of orientation-preserving diffeomorphisms of $S_{g}$, we obtain a map $\mathcal{T}_{g} \rightarrow \mathcal{T}_{g}$ by precomposition by $\psi$. By the definition of the equivalence
relation $\sim$, if $\psi$ is contained in the normal subgroup $\text{Diff}^{0}\left(S_{g}\right)$ of diffeomorphisms isotopic to a holomorphic diffeomorphism, this map defined by precomposition is trivial. Therefore, we have an action of the mapping class group of $S_{g}$ 

\begin{displaymath}
\text{Mod}\left(S_{g} \right):= \text{Diff}^{+}\left(S_{g}\right) \big/ \text{Diff}^{0}\left(S_{g}\right)
\end{displaymath}
on $\mathcal{T}_{g}(S)$. The moduli space $\mathcal{M}_{g}$ of $S_{g}$ is defined as 

\begin{displaymath}
\mathcal{M}_{g}:= \mathcal{T}_{g}\big/ \text{Mod}\left(S_{g}\right).
\end{displaymath}

$\mathcal{T}_{g}$ is a real-analytic space, and is homeomorphic to $\mathbb{R}^{6g-6}$. The cotangent space at $X \in \mathcal{T}_{g}$ can be identified with $Q(X)$, the space of holomorphic quadratic differentials on $X$. The dual space (tangent space at $X$) is naturally identified with the space of harmonic Beltrami differentials on $X$, ${B}(X)$. Suppose $\mu = \mu(z)d\bar{z}\otimes(dz)^{-1} \in B(X)$ and $q = q(z)dz^2 \in Q(X)$. There is a natural non-degenerate pairing on $\mathcal{T}_g$
\begin{displaymath}
\langle{\mu,q\rangle} = \int_X \mu(z)q(z)|dz|^2.
\end{displaymath}
The norm $||\mu||_{\mathcal{T}} = \mathrm{sup}\left\{|\langle\{\mu,q\rangle\}:q \in Q(X), ||q||_1 = \int_X|q| = 1\right\}$ gives the $\textit{Teichm{\"u}ller metric}$ \ on $\mathcal{T}_g$. This is the infinitesimal description of the Teichm{\"u}ller metric. The distance between two points in Teichm{\"u}ller space has a simple description in terms of dilatation. We have

\begin{displaymath}
d_{\mathcal{T}}\left(\left(\phi_1:S_g \rightarrow S_1\right),\left(\phi_2:S_g \rightarrow S_2\right)\right) = \frac{1}{2}\underset{\phi}{\mathrm{inf}} \; \mathrm{log} \; K_{\phi}
\end{displaymath}
where $\phi: X_1 \rightarrow X_2$ ranges over all quasi-conformal maps from $X_1$ to $X_2$ isotopic to $\phi_1\circ\phi_2^{-1}$ and $K_{\phi} \geq 1$ is the dilatation coefficient. Since the definition of the metric only depends on the holomorphic structure at $X$, the Teichm{\"u}ller metric on $\mathcal{T}_g$ descends to a metric on $\mathcal{M}_{g}$.  \\

It is well-known that a pair $(X,q)$ with $ q \in Q(X)$ and $q \neq 0$, generates a holomorphic embedding from the hyperbolic plane $\mathbb{H}$ with Kobayashi metric on $\mathbb{H}$ and Teichm{\"u}ller metric on $\mathcal{T}_g$

\begin{displaymath}
\tilde{f}:\mathbb{H} \rightarrow \mathcal{T}_g.
\end{displaymath}
$\tilde{f}$ is an isometry. Passing to $\mathcal{M}_g = \mathcal{T}_g \big/ \mathrm{Mod}(S_g)$, we obtain a $\textit{complex geodesic}$

\begin{displaymath}
f: \mathbb{H} \rightarrow \mathcal{M}_g.
\end{displaymath}
We summarize the constructions of $\tilde{f}$ and $f$, and the connection to dynamics. For $t$ a complex parameter, the Riemann surface $X_t = \tilde{f}(t)$ is characterized by the property that the complex dilatation $\mu_t$ of the extremal quasi-conformal map $\psi_t:X \rightarrow X_t$ is given by

\begin{equation}
\label{equation2.1}
\mu_t = \left(\frac{i-t}{i+t}\right) \cdot \frac{\bar{q}}{|q|}.
\end{equation}
Now, assume $q$ is the square of a holomorphic $1$-form $w$, $q = \omega^2$. Then, these complex geodesics have an interpretation in terms of the $\mathrm{SL}(2,\mathbb{R})$ action on $\Omega_1\mathcal{T}_g\left(\sigma\right)$. In particular, any holomorphic $1$ -form $\omega$ yields, away from its zeroes, a flat metric Euclidean metric $|\omega|$ and an atlas of charts $U_i \rightarrow \mathbb{C}$ whose transition functions are translations. The $\mathrm{SL}(2,\mathbb{R})$-action on $\Omega_1\mathcal{T}_g\left(\sigma\right)$ is defined by composition of these charts with matrices $A \in \mathrm{SL}(2,\mathbb{R})$ acting linearly on $\mathbb{C} \simeq \mathbb{R}^2$. $A \in \mathrm{SL}(2,\mathbb{R})$ can be interpreted as an affine map in these coordinate charts $A\cdot(X,\omega) = (X',\omega')$. This yields a quasi-conformal map $f: X \rightarrow X'$ with terminal holomorphic $1$-form $\omega'$. In the case that $A \in \mathrm{SO}(2,\mathbb{R})$, the map $f$ is conformal, so the action descends to a faithful action of $\mathrm{SL}(2,\mathbb{R})/\mathrm{SO}(2,\mathbb{R})$. One can show that this descended action induces an isometric injection

\begin{displaymath}
\mathrm{SL}(2,\mathbb{R})/\mathrm{SO}(2,\mathbb{R})(X,\omega) \rightarrow \mathcal{T}_g.
\end{displaymath}
Furthermore, composition with the projection $\pi:\mathcal{T}_g \rightarrow \mathcal{M}_g$ defines the complex geodesic

\begin{displaymath}
f: \mathbb{H} \rightarrow \mathcal{M}_g.
\end{displaymath}
One can check that if $t \in \mathbb{C}$ parametrizes the Riemann surfaces, the complex dilatation of the extremal quasi-conformal map $\psi_t: X \rightarrow X_t$ is given by

\begin{equation}
\mu_t = \left(\frac{i-t}{i+t}\right) \cdot \frac{\bar{\omega}}{|\omega|}
\end{equation}
which accords with Equation $\ref{equation2.1}$. Furthermore, $f$ factors through the quotient space $V = \mathbb{H} / \Gamma$, where $\Gamma = \left\{B \in \mathrm{Aut}(\mathbb{H}): f(B\cdot t) = f(t) \ \forall t \right\}$. In the case that $\Gamma$ is a lattice, we call the quotient map

\begin{displaymath}
f:V \rightarrow \mathcal{M}_g
\end{displaymath}
a Teichm{\"u}ller curve. $f$ is proper and generically injective. The image $f(V)$ is not a normal subvariety, but we will not distinguish between the Teichm{\"u}ller curve as a map $f$ and the normalization of $f(V)$. One can interpret $\Gamma$ above directly in terms of flat geometry. Let $\mathrm{Aff}^{+}(X,\omega)$ be the set of orientation-preserving affine diffeomorphisms of $(X,\omega)$. For such an affine diffeomorphism $\phi:(X,\omega) \mapsto (X,\omega)$, the derivative $D\phi$ is constant. Since these diffeomorphisms are assumed to be orientation preserving, and also preserve the area of $\omega$, the map $\phi \mapsto D\phi$ defines a map of $\mathrm{Aff}^{+}(X,\omega)$ into $\mathrm{SL}(2,\mathbb{R})$. We will call the image of this map the $\textit{Veech}$ group of $(X,\omega)$, and denote it by $\mathrm{SL}(X,\omega)$. By Proposition 3.2 in \cite{mcmullen2003billiards}, $\Gamma$ and $\mathrm{SL}(X,\omega)$ are conjugate.

\begin{subsection}{Period Coordinates and Holonomy Vectors}

For any $(X,\omega) \in \Omega\mathcal{M}_g\left(\sigma\right)$, there exists a triangulation $T$ of the underlying surface $X$ by saddle connections. Denote the zeroes of $\omega$ by $Z(\omega)$, where $|Z(\omega)|=k$. One can choose $h = 2g + |Z(\omega)| - 1$ directed edges $\left\{w_i\right\}_{i=1}^h$ of $T$, and an open neighborhood $U \subset \Omega\mathcal{M}_g(\sigma)$ of $(X,\omega)$ in its stratum, so that there exists an open analytic embedding:

\begin{displaymath}
\phi_{T,\omega}: U \rightarrow H^1(X,Z(\omega),\mathbb{C}) \cong \mathbb{C}^{2g + Z(\omega) -1}
\end{displaymath}
called the period map. It is defined by $(X,\omega)$ to be the relative class $[\omega] \in H^1(X,Z(\omega),\mathbb{C})$ which satisfies
$\left<[\omega],w_i\right> = \int_{w_i}\omega$ for all $w_i$ viewed as relative classes in $H_1(X,Z(\omega),\mathbb{C})$. Thus, we may think of $[\omega]$ as a local coordinate on $\Omega \mathcal{M}_g(\sigma).$ For any other geodesic triangulation $T'$, the map $\phi_{T',\omega}\circ \phi^{-1}_{T,\omega}$ is linear.
A consequence of the uniformization theorem is that every element $(X,\omega) \in \Omega\mathcal{M}_g\left(\sigma\right)$ can be presented in the form

\begin{displaymath}
(X,\omega) = (P,dz)\big/ \sim
\end{displaymath}
for a polygon $P \subset \mathbb{C}$, and the $1$-form $dz$ on $\mathbb{C}$. The reason for this is that one can construct a geodesic triangulation of the flat surface $(X,|\omega|)$, with $Z(\omega)$ among its vertices. $X$ can then be presented as a quotient $\sim$ of the edges of a collection of triangles. One can check that the periods $\int_{w_i} \omega$ defining $\phi_{T,\omega}$ correspond to the vectors $\left\{v_i\right\} \in \mathbb{C}$, the edges of the polygon $P$. 

\end{subsection}

\begin{subsection}{$\mathrm{SL}(2,\mathbb{R})$-action and Transversal Directions}
\label{transversal_separation}

For a pair $(X,\omega)$, one can define the area of $(X,\omega)$

\begin{displaymath}
\mathrm{area}(X,\omega) =  \frac{i}{2} \int_X \omega \wedge \overline{\omega}.
\end{displaymath}
We denote by $\Omega_1\mathcal{M}_g(\sigma) \subset \Omega \mathcal{M}_g(\sigma)$ the unit-area locus. Assume $\left\{w_i\right\}_{i=1}^h$ forms a symplectic $\mathbb{Z}$-basis of $H_1(X,Z(\omega),\mathbb{Z})$. Choose a fundamental domain $\mathcal{F}$ for the action of $\mathrm{Mod}\left(S_g\right)$ on $\Omega_1\mathcal{T}_g(\sigma)$. The fundamental domain $\mathcal{F}$ is then identified with the quotient space $\Omega_1\mathcal{M}_{g}(\sigma)$. In period coordinates, there is a linear $\mathrm{SL}(2,\mathbb{R})$-action on $\mathcal{F}$; in particular, for a $2\times h$ real-valued matrix $B$ representing $\phi_{T,\omega}$ at a point $ x =(X,\omega) \in \Omega\mathcal{M}_{g}(\sigma)$, and $g \in \mathrm{SL}(2,\mathbb{R})$

\begin{displaymath}
g \cdot x = gBA(g,x)
\end{displaymath}
where the right-hand side is just matrix multiplication and $A: \mathrm{SL}(2,\mathbb{R}) \times \Omega_1\mathcal{M}_g(\sigma) \rightarrow \mathrm{Sp}(2g,\mathbb{Z}) \ltimes \mathbb{R}^h$ is the change of basis required to return $g\cdot x$ to the fundamental domain. The map $A$ is called the Kontsevich-Zorich cocycle. The Kontsevich-Zorich cocycle is not quite a cocycle in the dynamical sense due to the fact that the automorphism group $\mathrm{Aut}(X,\omega)$ is non-trivial (i.e., $\Omega\mathcal{M}_g(\sigma)$ is an orbifold). However, by considering for example, a level-3 structure \cite{eskin2018invariant}, $\Omega_1\mathcal{M}_g(\sigma)$ has a finite cover which is a manifold instead of an orbifold. Henceforth, we will assume we are working in the finite cover and continue to use the notation $\Omega_1\mathcal{M}_g(\sigma)$. In particular, for a pair $(X,\omega) \in \Omega_1\mathcal{M}_g(\sigma)$, the underlying Riemann surface $X$ will have no non-trivial automorphisms. Since applying a rotation matrix to $(X,\omega)$ doesn't change the complex structure on $X$, we may assume the Veech groups $\mathrm{SL}(X,\omega)$ contain no elliptic elements. This will be relevant in Subsection $\ref{finding_elements}$.\\

For any subset $\mathcal{N} \subset \Omega_1\mathcal{M}_g(\sigma)$, define

\begin{displaymath}
\mathbb{R}\mathcal{N} = \left\{(X,t\omega) \ \vert \ (X,\omega) \in \mathcal{N}, t \in \mathbb{R} \right\} \subset \mathbb{R}\Omega_1 \mathcal{M}_g(\sigma).
\end{displaymath}
Denote by $H^1$  the complex flat vector bundle over $\mathbb{R}\Omega_1\mathcal{M}_g(\sigma)$ whose fiber over any $(X,\omega)$ is \ $H^1(X,\mathbb{C})$, and by $H^1_{rel}$ the complex flat vector bundle whose fiber over $(X,\omega)$ is $H^1(X,Z(\omega),\mathbb{C})$. We will denote by $p:H^1_{rel} \mapsto H^1$ the forgetful map from relative to absolute cohomology.

\begin{definition}\label{definition2.1}
We call a subset $\tilde{\mathcal{N}} = \mathbb{R}\mathcal{N} \subset \mathbb{R}\Omega_1\mathcal{M}_g(\sigma)$ an affine invariant submanifold of \newline $\Omega_1\mathcal{M}_g(\sigma)$ if $\mathcal{N}$ is an analytic submanifold of $\Omega_1\mathcal{M}_g(\sigma)$, and locally in period coordinates, $\tilde{\mathcal{N}}$ is a complex-linear subspace of $\mathbb{C}^{2g + |Z(\omega)| -1}$ given by linear equations with real coefficients.
\end{definition}
The following observation is helpful for our purposes.

\begin{prop}\label{prop2.2}
The tangent bundle of $\tilde{\mathcal{N}}$ is determined in local period coordinates by a subspace 

\begin{displaymath}
T_{\mathbb{C}}(\tilde{\mathcal{N}}) = T_{\mathbb{R}}(\tilde{\mathcal{N}}) \otimes_{\mathbb{R}} \mathbb{C}
\end{displaymath}
where $T_{\mathbb{R}}(\tilde{\mathcal{N}}) \subset H^1(X,Z(\omega),\mathbb{R})$. Further, the tangent space to the stratum at $(X,\omega) \in \mathbb{R}\Omega_1\mathcal{M}_g(\sigma)$ is identified with $H^1(X,Z(\omega),\mathbb{C}) \cong H^1(X,Z(\omega),\mathbb{R}) \otimes_{\mathbb{R}} \mathbb{C}$ and the $\mathrm{SL}(2,\mathbb{R})$-action is on $\mathbb{C} \cong \mathbb{R}^2$ under this identification. In addition, if in local period coordinates $(X,\omega)$ is written as $ x = u + iv$, for $u$ and $v$ column vectors, then the $\mathbb{R}$-linear span of $u$ and $v$, $\mathcal{H}(x)$, is a subspace of $H^1(X,Z(\omega),\mathbb{R})$ so that the tangent space to the $\mathrm{SL}(2,\mathbb{R}$-orbit of $(X,\omega)$ is contained in $\mathcal{H}(x) \otimes \mathbb{C}$.
\end{prop}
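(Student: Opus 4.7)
The proposition is essentially a structural unpacking of Definition~\ref{definition2.1} together with the description of the $\mathrm{SL}(2,\mathbb{R})$-action in period coordinates. My plan is to handle the three claims in sequence, since each uses only the preceding setup plus a direct computation in coordinates.

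For the first claim, I would start by fixing a basis $\{w_i\}_{i=1}^h$ of $H_1(X,Z(\omega),\mathbb{Z})$ so that the period map $\phi_{T,\omega}$ identifies an open neighborhood of $(X,\omega)$ in its stratum with an open set in $H^1(X,Z(\omega),\mathbb{C})\cong \mathbb{C}^h$. By Definition~\ref{definition2.1}, $\tilde{\mathcal{N}}$ is cut out locally in these coordinates by a finite collection of equations $L_j(z)=0$ with $L_j$ real-linear on $\mathbb{C}^h\cong \mathbb{R}^{2h}$; the hypothesis that the defining subspace is \emph{complex}-linear forces this system to be invariant under multiplication by $i$. Consequently the vanishing locus splits as the complexification of its real part: letting $T_{\mathbb{R}}(\tilde{\mathcal{N}}):=T_{\mathbb{C}}(\tilde{\mathcal{N}})\cap H^1(X,Z(\omega),\mathbb{R})$, the identity $T_{\mathbb{C}}(\tilde{\mathcal{N}})=T_{\mathbb{R}}(\tilde{\mathcal{N}})\otimes_{\mathbb{R}}\mathbb{C}$ follows from elementary linear algebra over the extension $\mathbb{R}\subset\mathbb{C}$.

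For the second claim, I would recall that the period map gives $T_{(X,\omega)}\Omega\mathcal{M}_g(\sigma)\cong H^1(X,Z(\omega),\mathbb{C})$ and then observe that the canonical decomposition $[\omega] = [\mathrm{Re}\,\omega]+i[\mathrm{Im}\,\omega]$ exhibits the natural real structure $H^1(X,Z(\omega),\mathbb{C})\cong H^1(X,Z(\omega),\mathbb{R})\otimes_{\mathbb{R}}\mathbb{C}$. The $\mathrm{SL}(2,\mathbb{R})$-action, defined by postcomposing the flat charts $U_i\to \mathbb{C}\cong \mathbb{R}^2$ with linear maps, acts period-by-period: for a saddle connection $w_i$, $\int_{w_i}g\cdot \omega = g\cdot \int_{w_i}\omega$, where on the right $\int_{w_i}\omega\in\mathbb{C}$ is regarded as a vector in $\mathbb{R}^2$. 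This is exactly the statement that $\mathrm{SL}(2,\mathbb{R})$ acts on the $\mathbb{C}\cong\mathbb{R}^2$ factor of $H^1(X,Z(\omega),\mathbb{R})\otimes_{\mathbb{R}}\mathbb{R}^2$.

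For the third claim, I would write $[\omega]=u+iv$ with $u,v\in H^1(X,Z(\omega),\mathbb{R})$ and apply the formula from the previous paragraph to a general $g=\bigl(\begin{smallmatrix}a & b\\ c & d\end{smallmatrix}\bigr)\in\mathrm{SL}(2,\mathbb{R})$, producing $g\cdot[\omega]=(au+bv)+i(cu+dv)$. Both real and imaginary parts lie in $\mathcal{H}(x)=\mathrm{span}_{\mathbb{R}}(u,v)$, so the entire $\mathrm{SL}(2,\mathbb{R})$-orbit sits inside $\mathcal{H}(x)\otimes_{\mathbb{R}}\mathbb{C}$; differentiating at the identity in $\mathrm{SL}(2,\mathbb{R})$ yields the stated tangent space containment. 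There is no real obstacle here beyond bookkeeping: the only mild subtlety is verifying that the linear equations defining $\tilde{\mathcal{N}}$ in the first claim are genuinely invariant under $i$, which is the content of the phrase ``complex-linear subspace $\ldots$ given by linear equations with real coefficients'' in Definition~\ref{definition2.1} and lets the complexification step go through cleanly.
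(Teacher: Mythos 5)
The paper states Proposition~\ref{prop2.2} without proof, presenting it as an observation that follows from Definition~\ref{definition2.1} and the preceding discussion of period coordinates and the $\mathrm{SL}(2,\mathbb{R})$-action. Your proposal correctly supplies the argument the paper leaves implicit, and it is sound: the complexification identity for $T_{\mathbb{C}}(\tilde{\mathcal{N}})$ follows precisely because the defining equations have real coefficients and the subspace is $i$-invariant; the identification $T_{(X,\omega)}\Omega\mathcal{M}_g(\sigma)\cong H^1(X,Z(\omega),\mathbb{R})\otimes_{\mathbb{R}}\mathbb{C}$ with the period-by-period $\mathrm{SL}(2,\mathbb{R})$-action is exactly the content of the paper's description $g\cdot x = gBA(g,x)$ from Subsection~\ref{transversal_separation}; and the computation $g\cdot[\omega]=(au+bv)+i(cu+dv)$ directly exhibits the orbit inside $\mathcal{H}(x)\otimes_{\mathbb{R}}\mathbb{C}$, from which the tangent-space containment follows by differentiation at the identity. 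This is the natural argument and accords with what the paper evidently intends; there is no gap.
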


\begin{definition}
\label{Hodge_inner_product}
The complex vector bundle $H^1$ has a symplectic intersection form given by 

\begin{displaymath}
\langle{\omega_1,\omega_2\rangle}:= \frac{i}{2}\int_X \omega_1 \wedge \overline{\omega_2}
\end{displaymath}
for $\omega_1, \omega_2$ in a fiber of $H^1.$ Henceforth, we will refer to a tangent vector $v \in H^1(X,Z(\omega),\mathbb{C})$ at a point $(X,\omega)$ as in a direction $\textit{tranversal}$ to the $\mathrm{SL}(2,\mathbb{R})$-orbit of $(X,\omega)$ if $p(v)$ is in the symplectic complement of the image under $p$ of the tangent space to the $\mathrm{SL}(2,\mathbb{R})$-orbit, $p(\mathbf{H}) \subset p(\mathcal{H}(x) \otimes_{\mathbb{R}} \mathbb{C})$. For any such $v \in H^1(X,Z(\omega),\mathbb{C})$ in this complement, we write $v \in p(\mathbf{H})^{\perp}$.

\end{definition}
By the preceeding discussion, we have a decomposition of the tangent bundle

\begin{equation}
\label{Equation2.3}
T_{\mathbb{C}}(\tilde{\mathcal{N}}) = T_{\mathbb{C}}^{st}(\tilde{N}) \oplus T_{\mathbb{C}}^{bal}(\tilde{\mathcal{N}})
\end{equation}
where $T_{\mathbb{C}}(\tilde{\mathcal{N}})^{bal}$ consists of the elements $v \in T_{\mathbb{C}}(\tilde{\mathcal{N}})$ so that $v \in p(\mathbf{H})^{\perp}$, using the notation in Definition $\ref{Hodge_inner_product}$. This decomposition is invariant under the action of $\mathrm{SL}(2,\mathbb{R})$, but is not invariant under parallel transport unless $\tilde{\mathcal{N}}$ is a Teichm{\"u}ller curve.

\begin{definition}[AGY-metric, \cite{AvilaExp}]
\label{defn_AGYnorm}
Let $x = (X,\omega) \in \Omega_1\mathcal{M}_g(\sigma)$. Identify the tangent space at $(X,\omega)$ with $H^1(X,Z(\omega),\mathbb{C})$. For a tangent vector $v \in H^1(X,Z(\omega),\mathbb{C})$, consider the norm

\begin{displaymath}
\lvert\lvert v \rvert\rvert_{x} :=\underset{\mathbf{s} \in \mathcal{S}} {\mathrm{sup}} \Big\lvert \frac{v(\mathbf{s})}{\mathrm{Hol}_{\omega}(\mathbf{s})}\Big\rvert
\end{displaymath}
where $\mathcal{S}$ denotes the set of saddle connections on $(X,\omega)$, and $\mathrm{Hol}_{\omega}(\mathbf{s}) \in \mathbb{C}$ is the holonomy vector associated to $\mathbf{s}$ for the form $\omega$. The set of holonomy vectors is always discrete. It is shown in \cite{AvilaExp} that this definition induces a complete Finsler metric on $\Omega_1\mathcal{M}_g(\sigma)$. Let $\kappa:[0,1] \rightarrow \Omega_1\mathcal{M}_g(\sigma)$ be a differentiable path and denote its AGY-length by

\begin{displaymath}
l(\kappa):= \int_0^1 \lvert \lvert \kappa'(t) \rvert \rvert_{\kappa(t)} \ dt.
\end{displaymath}
The distance function associated to this length will be denoted by $d_{AGY}$.

\end{definition}

\begin{lemma}[\cite{Avila2010SmallEO}]\label{lemma2.5}
 Let $\kappa:[0,1] \rightarrow \Omega_1\mathcal{M}_g(\sigma)$ be a differentiable path. Then for any tangent vector $v \in H^1(X,Z(\omega),\mathbb{C})$,
 \begin{displaymath}
 e^{-\mathrm{length}(\kappa)} \leq \frac{\lvert\lvert v \rvert\rvert_{\kappa(0)}}{\lvert\lvert v \rvert \rvert_{\kappa(1)}} \leq e^{\mathrm{length}(\kappa)}.
 \end{displaymath}
\end{lemma}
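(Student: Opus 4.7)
The statement is a Gronwall-type comparison, so the natural strategy is to show that the function $t \mapsto \log \lvert\lvert v \rvert\rvert_{\kappa(t)}$ is Lipschitz in $t$ with pointwise Lipschitz constant at most $\lvert\lvert \kappa'(t) \rvert\rvert_{\kappa(t)}$, and then integrate over $[0,1]$. Exponentiating the resulting estimate gives the two-sided bound in the lemma.

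To bound the logarithmic derivative pointwise, I would fix $t_0 \in [0,1]$ and work in a small neighborhood $U$ of $\kappa(t_0)$ in the stratum in which period coordinates are defined and in which the (discrete) set of saddle connections on $(X,\omega)$ persists and can be canonically continued along the path. The cohomology class $v \in H^1(X,Z(\omega),\mathbb{C})$ is parallel-transported by the flat Gauss-Manin connection, so that for each saddle connection $\mathbf{s}$ viewed as a relative homology class, the pairing $v(\mathbf{s})$ is locally constant in $t$. On the other hand, the holonomy $\mathrm{Hol}_{\omega_t}(\mathbf{s}) = \int_{\mathbf{s}} \omega_t$ is precisely the evaluation of the period coordinate at $\kappa(t)$ on the homology class $\mathbf{s}$, so its derivative in $t$ is $\kappa'(t)(\mathbf{s})$. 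Consequently
\[
\left| \frac{d}{dt} \log \bigl| \mathrm{Hol}_{\omega_t}(\mathbf{s}) \bigr| \right| \leq \left| \frac{\kappa'(t)(\mathbf{s})}{\mathrm{Hol}_{\omega_t}(\mathbf{s})} \right| \leq \lvert\lvert \kappa'(t) \rvert\rvert_{\kappa(t)}
\]
by the very definition of the AGY norm (the right-hand side is a sup that includes the ratio on the left). Combining with the fact that $v(\mathbf{s})$ is locally constant gives, for each such $\mathbf{s}$,
\[
\left| \frac{d}{dt} \log \left| \frac{v(\mathbf{s})}{\mathrm{Hol}_{\omega_t}(\mathbf{s})} \right| \right| \leq \lvert\lvert \kappa'(t) \rvert\rvert_{\kappa(t)}.
\]

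Now $\log \lvert\lvert v \rvert\rvert_{\kappa(t)}$ is the pointwise supremum of the family of functions $t \mapsto \log \bigl| v(\mathbf{s})/\mathrm{Hol}_{\omega_t}(\mathbf{s}) \bigr|$ as $\mathbf{s}$ ranges over saddle connections. Each member of this family is (locally) Lipschitz with constant $\lvert\lvert \kappa'(t) \rvert\rvert_{\kappa(t)}$, and the supremum of an equi-Lipschitz family is Lipschitz with the same constant. The only subtlety is that the indexing set of saddle connections can change along $\kappa$ as saddle connections are born and die; however, in a sufficiently small neighborhood of $\kappa(t_0)$ only finitely many saddle connections can realize, or come within a factor of $2$ of realizing, the supremum, and on such a neighborhood the bound above is uniform. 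Hence $t \mapsto \log \lvert\lvert v \rvert\rvert_{\kappa(t)}$ is absolutely continuous with a.e.\ derivative bounded in absolute value by $\lvert\lvert \kappa'(t) \rvert\rvert_{\kappa(t)}$.

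Integrating from $0$ to $1$ yields
\[
\Bigl| \log \lvert\lvert v \rvert\rvert_{\kappa(1)} - \log \lvert\lvert v \rvert\rvert_{\kappa(0)} \Bigr| \leq \int_0^1 \lvert\lvert \kappa'(t) \rvert\rvert_{\kappa(t)} \, dt = \mathrm{length}(\kappa),
\]
and exponentiating gives the claimed double inequality. The main technical point is the last one, namely the handling of the supremum as the set $\mathcal{S}$ of saddle connections varies along $\kappa$; this is not a deep difficulty but must be addressed, and is usually dispatched either by the equi-Lipschitz sup argument above or by invoking that the saddle-connection set is locally finite in any compact region of period coordinates and that $\mathrm{Hol}_\omega(\mathbf{s})$ depends continuously on $\omega$.
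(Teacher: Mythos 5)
The paper does not supply a proof of this lemma; it is cited directly from \cite{Avila2010SmallEO} and then used as a black box in proving Lemma~\ref{exponential_map}, so there is no in-paper argument to compare against. Your outline is the correct and standard Gronwall-type argument, and it is consistent with how the source proves this estimate: identify $v$ across the path via the flat (Gauss--Manin / period-coordinate) connection so that $v(\mathbf{s})$ is locally constant, differentiate $\log|\mathrm{Hol}_{\omega_t}(\mathbf{s})|$ in $t$ to get the pointwise bound $\bigl|\tfrac{d}{dt}\log|v(\mathbf{s})/\mathrm{Hol}_{\omega_t}(\mathbf{s})|\bigr|\le\|\kappa'(t)\|_{\kappa(t)}$ from the very definition of the AGY norm, pass the bound to the supremum, and integrate. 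One small point of rigor worth making explicit: the functions $g_{\mathbf{s}}(t)=\log|v(\mathbf{s})/\mathrm{Hol}_{\omega_t}(\mathbf{s})|$ are not equi-Lipschitz with a single constant (the bound $\|\kappa'(t)\|_{\kappa(t)}$ depends on $t$), so rather than ``a sup of equi-Lipschitz functions is Lipschitz'' it is cleaner to note that each $g_{\mathbf{s}}$ satisfies the two-sided integral inequality $|g_{\mathbf{s}}(t_2)-g_{\mathbf{s}}(t_1)|\le\int_{t_1}^{t_2}\|\kappa'(\tau)\|_{\kappa(\tau)}\,d\tau$, and this two-sided inequality passes directly to the supremum $M(t)=\sup_{\mathbf{s}}g_{\mathbf{s}}(t)$ by the usual $\epsilon$-extraction. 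Your handling of the varying saddle-connection set and of the local finiteness of near-maximizers is the right remark to make and matches what the source does; the essential content is that the holonomy set is discrete and finiteness of $\|v\|_x$ is built into the AGY framework.
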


The following and its proof are essentially Proposition 5.3 in \cite{Avila2010SmallEO}; there, it is stated for vectors in the unstable space, but the proof goes through for any choice of tangent vectors. We briefly recapitulate the proof ingredients for the general case, as we will make extensive use of the AGY-norm.

\begin{lemma}["Exponential map", \cite{Avila2010SmallEO}]
\label{exponential_map}
Let $x = (X,\omega) \in \Omega_1\mathcal{M}_g(\sigma)$ and $v \in H^1(X,Z(\omega),\mathbb{C})$, a tangent vector. Consider the path $\kappa:[0,1] \rightarrow \Omega_1\mathcal{M}_g(\sigma)$ starting from $x$ satisfying $\kappa'(t) =v$ for all $t \in [0,1]$. Implicitly, we assume $\kappa(t)$ is well-defined for all $t \in [0,1]$. Define $\mathrm{exp}_{x}(v):= \kappa(1) \in \Omega_1\mathcal{M}_g(\sigma)$. Let $B(0,r)$ denote the ball of radius $r$ centered at the origin of $H^1(X,Z(\omega),\mathbb{C})$ with respect to the norm $\lvert \lvert \cdot \rvert\rvert_{x}$. Then, there exists an absolute constant $C_1 > 0$ and $C_2 = (C_1 +1)^2C_1$ such that the following hold:

\begin{enumerate}
\item for the rescaling $\omega \mapsto s\omega$, $s\in \mathbb{R}$, $l(\kappa) = l(s\kappa)$. That is, the AGY-metric is invariant under real scalings.
\item The exponential map $\mathrm{exp}_x:H^1(X,Z(\omega),\mathbb{C}) \rightarrow \Omega_1\mathcal{M}_g(\sigma)$ is well-defined over $B(0,1/C_1)$.
\item For every $v \in B(0,1/C_1)$,
\begin{displaymath}
d_{AGY}(x, \mathrm{exp}_{x}(v)) \leq C_1\lvert\lvert v \rvert\rvert_{x}.
\end{displaymath}
\item For every $v \in B(0,1/C_1)$ and every $w \in H^1(X,Z(\omega),\mathbb{C})$,
\begin{displaymath}
1/C_1 \leq \frac{\lvert\lvert w \rvert\rvert_{x}}{\lvert\lvert w \rvert\rvert_{\mathrm{exp}_{x}(v)}} \leq C_1.
\end{displaymath}
\item For $v \in B(0,1/C_2)$,
\begin{displaymath}
d_{AGY}(x, \mathrm{exp}_x(v)) \geq \lvert\lvert v \rvert\rvert_{\omega}/C_1.
\end{displaymath}
\end{enumerate}

\end{lemma}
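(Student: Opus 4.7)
The overall strategy is to work in local period coordinates, where the condition $\kappa'(t)=v$ makes $\kappa(t)=x+tv$ a straight line, and to exploit the fact that the holonomy of any saddle connection $\mathbf{s}$ persisting along $\kappa$ is affine in $t$:
\[
\mathrm{Hol}_{\kappa(t)}(\mathbf{s})=\mathrm{Hol}_x(\mathbf{s})+t\,v(\mathbf{s}).
\]
Items $(1)$--$(4)$ follow fairly directly from this observation together with the definition of $\|\cdot\|_x$; item $(5)$ is the substantive one and requires a variational argument combined with Lemma \ref{lemma2.5}.

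For item $(1)$, both evaluations $v(\mathbf{s})$ and holonomies $\mathrm{Hol}_\omega(\mathbf{s})$ are linear in $\omega$, so the ratio defining $\|\cdot\|_x$ is invariant under $\omega\mapsto s\omega$ and hence $l(\kappa)=l(s\kappa)$. For items $(2)$ and $(4)$, choose $C_1$ large enough that $\|v\|_x<1/C_1$ implies $|v(\mathbf{s})|<|\mathrm{Hol}_x(\mathbf{s})|/C_1$ uniformly in $\mathbf{s}\in\mathcal{S}$; the affine formula then gives
\[
(1-1/C_1)|\mathrm{Hol}_x(\mathbf{s})|\leq|\mathrm{Hol}_{\kappa(t)}(\mathbf{s})|\leq(1+1/C_1)|\mathrm{Hol}_x(\mathbf{s})|
\]
for all $t\in[0,1]$, so no saddle connection degenerates along $\kappa$ and the path stays in the stratum, proving $(2)$. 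Substituting this two-sided bound into the numerator/denominator in the definition $\|w\|_{\kappa(t)}=\sup_{\mathbf{s}}|w(\mathbf{s})/\mathrm{Hol}_{\kappa(t)}(\mathbf{s})|$ gives the bi-Lipschitz bound in $(4)$ (after possibly enlarging $C_1$). Item $(3)$ then follows by integration: using $(4)$,
\[
d_{AGY}(x,\exp_x(v))\leq l(\kappa)=\int_0^1\|v\|_{\kappa(t)}\,dt\leq C_1\|v\|_x.
\]

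For item $(5)$, I would let $\gamma:[0,1]\to\Omega_1\mathcal{M}_g(\sigma)$ be an arbitrary differentiable path from $x$ to $\exp_x(v)$. For any saddle connection $\mathbf{s}$ persistent along $\gamma$, differentiating $t\mapsto\mathrm{Hol}_{\gamma(t)}(\mathbf{s})$ and invoking the definition of the AGY-norm gives the differential inequality
\[
\bigl|\tfrac{d}{dt}\mathrm{Hol}_{\gamma(t)}(\mathbf{s})\bigr|\leq\|\gamma'(t)\|_{\gamma(t)}\,|\mathrm{Hol}_{\gamma(t)}(\mathbf{s})|,
\]
and Gronwall yields $|\mathrm{Hol}_{\gamma(t)}(\mathbf{s})|\leq e^{l(\gamma)}|\mathrm{Hol}_x(\mathbf{s})|$. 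Integrating once more,
\[
|v(\mathbf{s})|=\bigl|\mathrm{Hol}_{\gamma(1)}(\mathbf{s})-\mathrm{Hol}_{\gamma(0)}(\mathbf{s})\bigr|\leq\int_0^1\|\gamma'(t)\|_{\gamma(t)}|\mathrm{Hol}_{\gamma(t)}(\mathbf{s})|\,dt\leq e^{l(\gamma)}\,l(\gamma)\,|\mathrm{Hol}_x(\mathbf{s})|,
\]
so $|v(\mathbf{s})|/|\mathrm{Hol}_x(\mathbf{s})|\leq e^{l(\gamma)}l(\gamma)$. By item $(3)$ it suffices to consider $\gamma$ with $l(\gamma)\leq 2C_1\|v\|_x$, and the restriction $v\in B(0,1/C_2)$ with $C_2=(C_1+1)^2 C_1$ is calibrated precisely so that the resulting $l(\gamma)$ is small enough to invert this inequality into $l(\gamma)\geq \|v\|_x/C_1$ after taking the supremum over $\mathbf{s}$. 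The principal obstacle is that the supremum in the definition of $\|v\|_x$ ranges over \emph{all} saddle connections on $x$, while the Gronwall argument applies only to those persistent along the entire path $\gamma$; handling saddle connections that cease to be geodesic somewhere along $\gamma$ requires expressing them as short integer combinations of persistent saddle connections (for instance via a geodesic triangulation of $(X,|\omega|)$ that survives in a neighborhood of $x$), paying only a multiplicative constant absorbed into $C_1$.
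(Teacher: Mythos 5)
Your route diverges from the paper's in a way worth understanding. The paper treats Lemma~\ref{lemma2.5} as a black box and runs a Gronwall-type integral inequality on the AGY-length functional $L(t)=\int_0^t\lvert\lvert\kappa'(r)\rvert\rvert_{\kappa(r)}\,dr$: Lemma~\ref{lemma2.5} gives $L'(t)\leq e^{L(t)}\lvert\lvert v\rvert\rvert_x$, whence $\lvert\lvert\kappa'(t)\rvert\rvert_{\kappa(t)}\leq\lvert\lvert v\rvert\rvert_x/(1-t\lvert\lvert v\rvert\rvert_x)$, and items 2--4 fall out immediately. You instead try to re-derive the content of Lemma~\ref{lemma2.5} directly from the affine holonomy formula, which is essentially what Avila--Gou\"{e}zel--Yoccoz do in proving that lemma in the first place. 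The saddle-connection-persistence issue you flag is exactly the technical heart of \emph{their} argument, and your two-sided bound on persistent holonomies does not by itself control $\lvert\lvert w\rvert\rvert_{\kappa(t)}$: the supremum there runs over $\mathcal{S}_{\kappa(t)}$, and new saddle connections can appear along $\kappa$ even when no old one degenerates. Gesturing at ``short integer combinations'' does not close this; the paper avoids the problem entirely by citing Lemma~\ref{lemma2.5}, which is the intended economy.

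For item 5 your argument is genuinely different and contains a second gap. You Gronwall on holonomies along an arbitrary near-minimizing path $\gamma$ to get $\lvert v(\mathbf{s})\rvert\leq e^{l(\gamma)}l(\gamma)\lvert\mathrm{Hol}_x(\mathbf{s})\rvert$ and take a supremum. The paper instead lifts $\gamma$ to $\tilde\gamma$ in the fixed period chart at $x$, applies item 4 to obtain $\lvert\lvert\tilde\gamma'(t)\rvert\rvert_x\leq C_1\lvert\lvert\tilde\gamma'(t)\rvert\rvert_{\gamma(t)}$, and integrates to get $\lvert\lvert\tilde\gamma(t)\rvert\rvert_x\leq C_1 l(\gamma)\leq C_1/(C_1+1)^2<1/C_1$; this inequality is precisely what guarantees, inductively along $t$, that the lift stays in $B(0,1/C_1)$, so that item 4 is legitimately applicable and $\tilde\gamma(1)=v$ is well-defined in that chart. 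Your identity $\lvert v(\mathbf{s})\rvert=\lvert\mathrm{Hol}_{\gamma(1)}(\mathbf{s})-\mathrm{Hol}_{\gamma(0)}(\mathbf{s})\rvert$ presupposes exactly this single-chart lift, but you only assert that $C_2$ is ``calibrated'' for it without deriving it; the paper's integration step is the derivation. Aside from those two gaps (saddle-connection persistence, well-definedness of the lift of an arbitrary minimizing path), your argument would give a correct alternative proof, and it is conceptually closer to how Lemma~\ref{lemma2.5} itself is established in \cite{AvilaExp}.
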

\begin{proof}
For 1., we have

\begin{align*}
l(s\kappa) &= \int_0^1 \lvert \lvert s\kappa'(t) \rvert \rvert_{s\kappa(t)} \ dt \\
&= \int_0^1 \underset{\mathbf{s} \in \mathcal{S}} {\mathrm{sup}} \Bigg\lvert \frac{s\kappa'(t)(\mathbf{s})}{\mathrm{Hol}_{s\kappa(t)}(\mathbf{s})}\Bigg\rvert dt \\
&= \int_0^1 \underset{\mathbf{s} \in \mathcal{S}} {\mathrm{sup}} \Bigg\lvert \frac{s\kappa'(t)(\mathbf{s})}{s\mathrm{Hol}_{\kappa(t)}(\mathbf{s})}\Bigg\rvert dt \\
&= l(\kappa).
\end{align*}
For 2., let $\kappa:[0,1] \rightarrow \Omega_1\mathcal{M}_g(\sigma)$ be a path with $\kappa(0) = (X,\omega)$ and $\kappa'(t) = v \ \forall t \in [0,1]$. By Lemma $\ref{lemma2.5}$, we have

\begin{equation}\label{equation2.3}
\lvert \lvert \kappa'(t) \rvert\rvert_{\kappa(t)} = \lvert\lvert v \rvert\rvert_{x} e^{\int_0^t \lvert \lvert \kappa'(r) \rvert \rvert_{\kappa(r)} \ dr}.
\end{equation}
Consider the length function $L(t):\mathbb{R} \mapsto \mathbb{R}$ defined by $L(t) = \int_0^t \lvert \lvert \kappa'(r) \rvert \rvert_{\kappa(r)} \ dr$. By Equation $\ref{equation2.3}$, we have $L'(t) \leq e^{L(t)}\lvert\lvert v \rvert\rvert_{x}.$ Integrating both sides of this inequality and rearranging, we have

\begin{displaymath}
\lvert\lvert \kappa'(t) \rvert\rvert_{\kappa(t)} \leq \frac{\lvert\lvert v \rvert\rvert_{x}}{1-t\lvert\lvert v \rvert\rvert_{x}},
\end{displaymath}
which immediately implies that for any $C_1 > 1$, $\mathrm{exp}_\omega(v)$ is well-defined on the ball $B(0,1/C_1)$. Further, note that 

\begin{displaymath}
d_{AGY}(x,\mathrm{exp}_{x}(v)) \leq \int_0^1 \lvert\lvert \kappa'(t) \rvert\rvert_{\kappa(t)} \leq \frac{\lvert\lvert v \rvert\rvert_{x}}{(1-\lvert\lvert v \rvert\rvert_{x})},
\end{displaymath}
which implies $d_{AGY}(x,\mathrm{exp}_{x}(v)) \leq C_1 \lvert\lvert v \rvert\rvert_{x}$ for $v \in B(0,1/C_1)$, establishing 3. Since $\mathrm{exp}_{\omega}$ is well-defined over $B(0,1/C_1)$ by 2., another application of Lemma \ref{lemma2.5} gives

\begin{displaymath}
e^{-L(1)} \leq \frac{\lvert\lvert w \rvert\rvert_{x}}{\lvert \lvert w \rvert\rvert_{\mathrm{exp}_{x}(v)}} \leq e^{L(1)}
\end{displaymath}
for any $w \in H^1(X,Z(\omega),\mathbb{C})$, proving 4., since $L(1) \leq \mathrm{log}(C_1)$. Finally, we claim that for $C_2 = (C_1 +1)^2C_1 $, we have

\begin{displaymath}
d_{AGY}(x, \mathrm{exp}_{x}(v)) \geq \lvert\lvert v \rvert\rvert_{x}/C_1
\end{displaymath}
so long as $v \in B(0,1/C_2)$. To see this, consider a (nearly) length-minimzing path $\kappa$ connecting $(X,\omega)$ and $\mathrm{exp}_{\omega}(v)$. By part 3., we have

\begin{equation}\label{equation2.4}
l(\kappa) \leq C_1 \lvert\lvert v \rvert\rvert_{x} \leq \frac{1}{(C_1 + 1)^2}.
\end{equation}
Consider a lift of $\kappa$, $\tilde{\kappa}$ to the tangent space to the stratum identified with $H^1(X,Z(\omega),\mathbb{C})$. By part 4., we have $\lvert\lvert \tilde{\kappa}'(t) \rvert\rvert_{x} \leq C_1 \lvert\lvert \tilde{\kappa}'(t) \rvert\rvert_{\kappa(t)}$. This inequality can be integrated from $0$ to $t$ to obtain

\begin{equation}\label{equation2.5}
\lvert\lvert \tilde{\kappa}(t) \rvert\rvert_{x} \leq C_1l(\kappa) \leq C_1 \frac{1}{(C_1+1)^2} < 1/C_1.
\end{equation}
We conclude that $\tilde{\kappa}(t) \in B(0,1/C_1) \ \forall t \in [0,1]$ and in particular $\tilde{\kappa}(1)$ is well-defined and we have
\begin{align*}
\lvert\lvert \tilde{\kappa}(1)\rvert\rvert_{x} = \lvert\lvert v \rvert\rvert_{x} &\leq  C_1 l(\kappa)\\
&\leq C_1 d_{AGY}(x,\mathrm{exp}_{x}(v)),
\end{align*}
proving 5.
\end{proof}

\end{subsection}

\subsection{Unstable and stable foliations}
\label{unstable_stable_foliations}

In contrast to the homogeneous setting, the unstable and stable foliations for the geodesic flow are only locally defined. We review these definitions and their relationships to our objects of interest.

\begin{definition}[Proposition 4.1 \cite{Avila2010SmallEO}]
\label{Definition2.7}
Recall that for any $x = (X,\omega) \in \Omega\mathcal{M}_g(\sigma)$ there is an open neighborhood $U \subset \Omega\mathcal{M}_g(\sigma)$ containing it so that the period map $\phi_{T,\omega}: U \rightarrow H^1(X,Z(\omega),\mathbb{C}) \cong \mathbb{C}^{2g + Z(\omega) - 1}$ is an open analytic embedding. Henceforth, we suppress the notation for the triangulation $T$ and write $\phi_{\omega}$. Further, recall that the tangent space to $(X,\omega)$ in the stratum can be identified with $\newline$ $T_{\omega}\Omega\mathcal{M}_g(\sigma) \cong H^1(X,Z(\omega),\mathbb{C}) \cong H^1(X,Z(\omega),\mathbb{R}) \otimes_{\mathbb{R}} \mathbb{C} \cong H^1(X,Z(\omega),\mathbb{R}) \oplus H^1(X,Z(\omega),i\mathbb{R})$. Then, there is a decomposition of the tangent space

\begin{displaymath}
T_{x}\Omega\mathcal{M}_g(\sigma) \cong \mathbf{v}(x) \oplus E^{u}(x) \oplus E^{s}(x)
\end{displaymath}
where $\mathbf{v}(x)$ is the direction of the Teichm{\"u}ller geodesic flow, and

\begin{align*}
E^{u}(x) =& T_{x}\Omega\mathcal{M}_g(\sigma) \cap D\phi_{\omega}^{-1}(H^1(X,Z(\omega),\mathbb{R})), \\
E^{s}(x) =& T_{x}\Omega\mathcal{M}_g(\sigma) \cap D\phi_{\omega}^{-1}(H^1(X,Z(\omega),i\mathbb{R})).
\end{align*}
These subspaces are integrable, and depend smoothly on $(X,\omega)$. We will refer to the integral leaves of $E^{u}(x)$ and $E^{s}(x)$, respectively, as the unstable and stable manifolds $W^{u}(x)$ and $W^{s}(x)$. These are affine submanifolds of $\Omega\mathcal{M}_g(\sigma)$ in the sense of Definition $\ref{definition2.1}$,
\end{definition}

It follows from Lemma 3.1 in $\cite{smillie2023horospherical}$ that the foliations $W^{u}(x)$ and $W^{s}(x)$ are well-defined on the unit-area locus $\Omega_1\mathcal{M}_g(\sigma)$. 

\begin{definition}[Period boxes]
Let $(\tilde{X},\tilde{\omega}) \in \Omega_1\mathcal{T}_g(\sigma)$. Define, for every $r>0$,

\begin{displaymath}
R_r(\tilde{X},\tilde{\omega}):= \left\{\phi_{\omega}(\tilde{x},\tilde{\omega}) + u + iv \ : \ u,v \in H^1(X,Z(\omega),\mathbb{R} \ , \lvert\lvert u + iv \rvert\rvert \leq r \right\}.
\end{displaymath}
where we are again using the identification $H^1(X,Z(\omega),\mathbb{C}) \cong H^1(X,Z(\omega),\mathbb{R} \oplus H^1(X,Z(\omega),i\mathbb{R})$.
\end{definition}
Note that we may choose $r > 0$ small enough so that 

\begin{displaymath}
\phi^{-1}_{\omega}: R_r(\tilde{X},\tilde{\omega}) \cap \phi_{\omega}(\Omega_1\mathcal{T}_g(\sigma)) \mapsto \Omega_1\mathcal{T}_g(\sigma)
\end{displaymath}
is a homeomorphism. Set $B_r(\tilde{X},\tilde{\omega})= \phi^{-1}_{\omega}\left(R_r(\tilde{X},\tilde{\omega})\right)$. Observe that, by Lemma $\ref{exponential_map}$, $B_r(\tilde{X},\tilde{\omega})$ is well-defined for all $0< r< 1/C_1$ and all $(\tilde{X},\tilde{\omega}) \in \Omega_1\mathcal{T}_g(\sigma)$. Further, for any $(X,\omega) \in \Omega_1\mathcal{M}_g(\sigma)$, there exists $0 < r(\omega) < 1/C_1$ so that the restriction of the projection map from the Teichm{\"u}ller space to moduli space $\pi|_{B_{r(\omega)}(\tilde{X},\tilde{\omega}) } : \Omega_1\mathcal{T}_g(\sigma) \rightarrow \Omega_1\mathcal{M}_g(\sigma)$ is injective. For all $t, s \in \mathbb{R}$, let $a_t = \begin{bmatrix}
    e^{t/2} & 0 \\
    0 & e^{-t/2}
\end{bmatrix}$ and $u_s = \begin{bmatrix}
    1 & s \\
    0 & 1
\end{bmatrix}$. The action of the transpose of $u_s$ is given by $u_s^{\intercal} = \begin{bmatrix}
    1 & 0 \\
    s & 1
\end{bmatrix}$. Recall that the two one-parameter subgroups $a_t$ and $u_s$ of $\mathrm{SL}(2,\mathbb{R})$ define, respectively, the Teichm{\"u}ller geodesic flow, and horocycle flow on $\Omega_1\mathcal{M}_g(\sigma)$. The horocycle flow $u_s$ preserves the leaves of the unstable foliation $W^{u}(\omega)$ and the tranpose $u_s^{\intercal}$ preserves the leaves of the stable foliation $W^{s}(\omega)$. To see this explicitly, let $z = \begin{bmatrix}
x_1 & ... & x_n\\
y_1 & ... & y_n
\end{bmatrix}$, and $z' = \begin{bmatrix}
x_1' & ... & x_n'\\
y_1' & ... & y_n'
\end{bmatrix}$ be two points in the same local period coordinate chart. If we write $\phi_{\omega}(X,\omega) = u + iv$, the stable leaves of $W^{s}(x)$ are locally identified with $\phi_{\omega}(X,\omega) + iw$ and $w$ is some row $n$-vector, by Definition $\ref{Definition2.7}$. Therefore, we see that $z$ and $z'$ are in the same stable leaf if 

\begin{displaymath}
\begin{bmatrix}
x_1 & ... & x_n\\
y_1 & ... & y_n
\end{bmatrix} =  \begin{bmatrix}
x_1' & ... & x_n'\\
y_1' & ... & y_n'
\end{bmatrix} + \begin{bmatrix}
0 & ... & 0\\
w_1 & ... & w_n
\end{bmatrix}.
\end{displaymath}
The analogous observation is true for the unstable leaves. Let $e^{-0.01t} < \beta < 1$.  For every $t \geq 0$, define

\begin{displaymath}
E_{t} = B_{\beta}^s \cdot a_{t} \cdot \left\{ u_r: r \in [0,1] \right\} \subset \mathrm{SL}(2,\mathbb{R})
\end{displaymath}
where $B_{\beta}:= \left\{u_s^{\intercal}: |s| \leq \beta \right\} \cdot \left\{a_t: |t| \leq \beta \right\}$. In words, this is a "smearing" by a small amount of $\mathrm{SL}(2,\mathbb{R})$ of a geodesic push of length $t$ of a length-1 horocycle segment. As we have just observed, this is in fact akin to a Margulis thickening in the unstable direction, but only along the $\mathrm{SL}(2,\mathbb{R})$-orbit, rather than the ambient space. We recall that for any such $E_t$ as above, and any $(X,\omega) \in \Omega_1\mathcal{M}_g(\sigma)$, we define a map $E_t \rightarrow \Omega_1\mathcal{M}_g(\sigma)$ given by $h \mapsto h\cdot (X,\omega)$, since any $h \in E_t$ is in particular an element of \ $\mathrm{SL}(2,\mathbb{R})$. We see that this map is injective if, for any lift of $(X,\omega)$, $(\tilde{X},\tilde{\omega})$, the restricted projection map $\pi|_{B_r(\tilde{X},\tilde{\omega}) \cap E_t \cdot (\tilde{X},\tilde{\omega}) } : \Omega_1\mathcal{T}_g(\sigma) \rightarrow \Omega_1\mathcal{M}_g(\sigma)$ is injective.

\section{Quantitative non-divergence}
\label{Section3}

We collect quantitative non-divergence results in a form convenient for our application.

\begin{definition}[Injectivity radius]
\label{inj_radius_def}
For $(X,\omega) \in \Omega_1\mathcal{T}_2(2)$, let $\mathcal{S}_{\omega}$ denote the set of saddle connections of $\omega$ on $X$. We will use $l_{\omega,\mathbf{s}}=\lvert \mathrm{Hol}_{\omega}(\mathbf{s})\rvert$ to denote the length of $\mathbf{s}$ in the conical flat metric $|\omega|$. Recall that we have a quotient map $\pi:\Omega_1\mathcal{T}_2(2) \rightarrow \Omega_1\mathcal{M}_2(2)$. We define the injectivity radius of $(X,\omega) \in \Omega_1\mathcal{T}_2(2)$, denoted by $\mathrm{inj}(X,\omega)$ to be the infinum 

\begin{displaymath}
\mathrm{inj}(X,\omega) = \underset{s \in \mathcal{S}_{\omega}}{\mathrm{inf}}\lvert \mathrm{Hol}_{\omega}(\mathbf{s}) \rvert.
\end{displaymath}
Define for $\epsilon > 0$,

\begin{displaymath}
\Omega_1\mathcal{M}_{\epsilon} = \pi \left(\left\{(X,\omega) \in \Omega_1\mathcal{T}_2(2) \ | \ \mathrm{inj}(X,\omega) \geq \epsilon \right\}\right).
\end{displaymath}
If $(X,\omega) \in \Omega_1\mathcal{M}_{\epsilon}$ (resp. $(X,\omega) \notin \Omega_1\mathcal{M}_{\epsilon})$, we will also say $\mathrm{inj}(X,\omega) \geq \epsilon$ (resp. $\mathrm{inj}(X,\omega) < \epsilon)$.
\end{definition}

\begin{theorem}[Minsky-Weiss \cite{minsky2002nondivergence}]\label{theorem4.2}
There are positive constants $\kappa_1$, $\kappa_2$, $\alpha$ ($\alpha \leq 1)$ depending only on the stratum $\Omega_1\mathcal{M}_g(\sigma)$ such that if every $(X,\omega) \in \Omega_1\mathcal{M}_g(\sigma)$, $s \in \mathcal{S}_{\omega}$, an interval $I \subset \mathbb{R}$, and \ $\rho' > 0$ satisfy:

\begin{displaymath}
\underset{s \in I}{\mathrm{sup}} \ l_{u_s\omega,\mathbf{s}} \geq \rho',
\end{displaymath}
then for any $0 < \epsilon < \kappa_1\rho'$ we have
\begin{displaymath}
{\big|\left\{r \in I : \mathrm{inj}(u_s (X,\omega)) < \epsilon \right\}\big|} < \kappa_2 \cdot \left(\frac{\epsilon}{\rho'}\right)^{\alpha}|I|.
\end{displaymath}
\end{theorem}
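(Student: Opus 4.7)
The plan is to follow the Kleinbock-Margulis strategy as adapted by Minsky-Weiss to strata of Abelian differentials. The fundamental analytical input is that for any fixed saddle connection $\mathbf{s}$ with holonomy $\mathrm{Hol}_{\omega}(\mathbf{s}) = x + iy$, the squared length under the horocycle flow
$$l_{u_s\omega,\mathbf{s}}^2 = (x+sy)^2 + y^2$$
is a polynomial of degree at most $2$ in $s$. The cornerstone fact is that polynomials of bounded degree are \emph{$(C,\alpha)$-good} on intervals: for any polynomial $p$ of degree $\leq d$ and any interval $J$,
$$\bigl|\{s \in J : |p(s)| < \epsilon\}\bigr| \leq 2d \left(\frac{\epsilon}{\sup_J |p|}\right)^{1/d} |J|.$$
Applied directly to the distinguished saddle connection $\mathbf{s}$ of the hypothesis, with $\sup_{s \in I} l_{u_s\omega,\mathbf{s}} \geq \rho'$, this immediately yields a bound of the desired form for the measure of times when $\mathbf{s}$ itself is short:
$$\bigl|\{s \in I : l_{u_s\omega,\mathbf{s}} < \epsilon\}\bigr| \ll (\epsilon/\rho')^{1/2}|I|.$$

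The essential difficulty is that the event $\mathrm{inj}(u_s\omega) < \epsilon$ may be witnessed by \emph{any} saddle connection, not merely $\mathbf{s}$, and there are infinitely many. The key structural input needed is a combinatorial constraint on short saddle connections on a flat surface: at any given time $r$, the collection of saddle connections of $u_r\omega$ shorter than a fixed threshold spans a proper subspace of $H_1(X, Z(\omega); \mathbb{R})$ and admits a \emph{short basis} whose cardinality is controlled by the stratum. I would therefore set up an inductive covering argument: partition $I$ into subintervals on each of which a fixed short basis is dominant, apply the $(C,\alpha)$-good property to the length polynomial of each basis element, and sum up the estimates. The saddle connection $\mathbf{s}$ hypothesized to be long somewhere in $I$ provides the initial ``protected'' direction ensuring that the entire basis cannot remain below threshold throughout $I$.

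The main obstacle is the combinatorial step: to show that short saddle connections are organized into admissible systems of bounded complexity, and that the inductive covering can be carried out without blowing up the exponent. In the Minsky-Weiss approach this is done by induction on the complexity of the stratum, reducing to subsurfaces bounded by short saddle connections, and using the triangle inequality on a flat surface together with Siegel-Veech type arguments to control how short saddle connections proliferate. The exponent $\alpha \leq 1$ that emerges is determined both by the degree $2$ of the length polynomials and by the depth of this combinatorial reduction; the constants $\kappa_1$, $\kappa_2$ absorb both the $(C,\alpha)$-good constants and the uniform bounds on the cardinality of admissible short systems. Once this combinatorial scaffolding is established, aggregating the polynomial measure estimates across the finite (stratum-controlled) family of basis elements gives the stated inequality.
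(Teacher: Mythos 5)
The paper does not prove this statement: Theorem~\ref{theorem4.2} is quoted verbatim as a black box from Minsky--Weiss, and the subsequent material in Section~\ref{Section3} (Definitions~\ref{Definition4.4}--\ref{Definition4.5}, Lemmas~\ref{Lemma4.8}--\ref{Lemma4.9}) consists of reformulations and downstream consequences, not a proof. So there is no ``paper's own proof'' against which to compare; the appropriate benchmark is the actual Minsky--Weiss argument, of which your sketch is a reasonable high-level reconstruction.

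Two points merit attention. First, there is a small but concrete arithmetic error in the single-saddle-connection estimate. Since $l_{u_s\omega,\mathbf{s}}^2 = (x+sy)^2 + y^2$ is a polynomial of degree $2$ in $s$, and you want to bound the set $\{l < \epsilon\} = \{l^2 < \epsilon^2\}$ with $\sup_I l^2 \geq (\rho')^2$, the $(C,1/2)$-good property of degree-$2$ polynomials gives $\lvert \{l < \epsilon\}\rvert \ll (\epsilon^2/(\rho')^2)^{1/2} \lvert I\rvert = (\epsilon/\rho')\lvert I\rvert$; the square roots cancel, and the correct exponent for a single length function is $1$, not $1/2$. This is consistent with the paper's Proposition~3.7, which records that the family of length functions is $(2,1)$-good. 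The exponent $\alpha$ in the theorem is then subsequently degraded by the covering argument, not by the polynomial estimate itself.

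Second, your description of the combinatorial input is not quite the one Minsky--Weiss use. You invoke ``short saddle connections span a proper subspace of $H_1(X, Z(\omega); \mathbb{R})$ and admit a short basis,'' which is the natural phrasing in the Dani--Kleinbock--Margulis lattice setting; Minsky--Weiss instead organize short saddle connections into \emph{systems} of pairwise disjoint (non-crossing) simple saddle connections, whose maximal cardinality is bounded by the topological complexity of the stratum, and they prove the measure estimate via an induction on systems using their abstract sparse-covering framework (cf.~Definitions~\ref{Definition4.4}--\ref{Definition4.5} and Lemma~\ref{Lemma4.8} in this paper). Your sketch correctly isolates that this combinatorial step is where the real work lies, and that the hypothesized long saddle connection supplies the anchor preventing the whole configuration from simultaneously becoming short, but it leaves the induction entirely unexecuted. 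As a proposal it has the right architecture; as a proof it is incomplete at exactly the step that makes the theorem nontrivial.
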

For clarity, we note that the condition $\underset{s \in I}{\mathrm{sup}} \ l_{u_s\omega,\mathbf{s}} \geq \rho'$ just means that there is no saddle connection on the surface $(X,\omega)$ whose length in the conical flat metric remains below $\rho'$ over a time-length $|I|$ flow from $(X,\omega)$. A consequence of the arguments in this section is a proof of the following lemma.

\begin{lemma}
\label{Main_quant_divergence}
There exist constants $\alpha, C_3, C_4 >0$ (depending only on the stratum) so that the following property holds. Let $0 < \epsilon, \eta < 1$ and $(X,\omega) \in \Omega_1\mathcal{M}_g(\sigma).$ Let $I \subset [-10,10]$ be an interval and assume $|I|\geq \eta$. Then, 

\begin{displaymath}
{\big|\left\{s \in I : \mathrm{inj}\left(a_t u_s (X,\omega)\right) < {\epsilon}\right\}\big|} < C_4  {\epsilon}^{\alpha}|I|,
\end{displaymath}
so long as $t \geq \big\lvert \mathrm{log}\left(\eta  \ \mathrm{inj}(X,\omega)\right)^{-1}\big\rvert + C_3$. The constants are explicit; $\alpha$ is the $\alpha$ appearing in Theorem $\ref{theorem4.2}$, $C_3 = \mathrm{log}\left(\frac{1}{\kappa_1}\right) + \mathrm{log} (\sqrt{2})$, and $C_4 = \frac{\kappa_2}{\kappa_1^{\alpha}}$ where $\kappa_1,\kappa_2$ are the constants appearing in Theorem $\ref{theorem4.2}$.
\end{lemma}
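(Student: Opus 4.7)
The plan is to reduce to Theorem~\ref{theorem4.2} using the commutation relation between the geodesic and horocycle one-parameter subgroups. Adopting the convention $a_t = \mathrm{diag}(e^t, e^{-t})$, one has $a_t u_s = u_{e^{2t}s}\, a_t$, so the change of variables $s' = e^{2t}s$ and $I' := e^{2t}I$ converts the geodesic-pushed horocycle orbit through $(X,\omega)$ into a pure horocycle orbit through $a_t(X,\omega)$:
\[
\bigl|\{s \in I : \mathrm{inj}(a_t u_s(X,\omega)) < \epsilon\}\bigr| \;=\; e^{-2t}\bigl|\{s' \in I' : \mathrm{inj}(u_{s'} a_t(X,\omega)) < \epsilon\}\bigr|.
\]
Applying Theorem~\ref{theorem4.2} to $a_t(X,\omega)$ on $I'$ with $\rho' = \kappa_1$ yields the bound $\kappa_2(\epsilon/\kappa_1)^\alpha |I'|$ on the right-hand side, which after multiplication by $e^{-2t}$ is exactly the conclusion of the lemma with $C_4 = \kappa_2/\kappa_1^\alpha$. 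All of the work is therefore in verifying the sup-hypothesis of Theorem~\ref{theorem4.2} for every saddle connection of $(X,\omega)$.

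For a saddle connection $\mathbf{s}$ with $\mathrm{Hol}_\omega(\mathbf{s}) = (x,y)$, its length on $a_t u_s(X,\omega)$ is $L(s) = \sqrt{e^{2t}(x+sy)^2 + e^{-2t}y^2}$, giving the two elementary lower bounds $L(s) \geq e^t |x+sy|$ and $L(s) \geq e^{-t}|y|$, subject to $x^2 + y^2 \geq \mathrm{inj}(X,\omega)^2$. The verification of $\sup_{s \in I} L(s) \geq \kappa_1$ splits according to the size of $|y|$. If $|y| \geq 2\kappa_1/(e^t \eta)$, then $s \mapsto x + sy$ has slope $|y|$, so its maximum modulus on $I$ (attained at an endpoint) is at least $|y|\,|I|/2 \geq |y|\eta/2$, giving $\sup_s L(s) \geq e^t|y|\eta/2 \geq \kappa_1$. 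If instead $|y| < 2\kappa_1/(e^t\eta)$, the hypothesis $t \geq \log(\eta\, \mathrm{inj}(X,\omega))^{-1} + C_3$ with $C_3 = \log(1/\kappa_1) + \log\sqrt{2}$ forces $|y|$ to be much smaller than $\mathrm{inj}(X,\omega)$, so that $x^2+y^2 \geq \mathrm{inj}(X,\omega)^2$ yields both $|x| \geq \mathrm{inj}(X,\omega)/\sqrt{2}$ and $|y| \leq |x|/20$; at any fixed $s_0 \in I \subset [-10,10]$ one then has
\[
L(s_0) \geq e^t |x + s_0 y| \geq e^t \bigl(|x| - 10|y|\bigr) \geq \tfrac{1}{2} e^t |x| \geq \tfrac{1}{2\sqrt{2}}\, e^t\, \mathrm{inj}(X,\omega),
\]
which by the same hypothesis exceeds $\kappa_1$.

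The main obstacle is the nearly horizontal case $|y| \ll \mathrm{inj}(X,\omega)$: the linear variation of $x+sy$ across $I$ becomes negligible, and one is forced to use the pointwise horizontal bound, which succeeds only because $I$ is confined to the bounded ambient interval $[-10,10]$ so that $|s_0|\,|y|$ cannot overwhelm $|x|$. If $I$ were allowed to extend to scales comparable to $e^{2t}$, the quantity $|s_0|\,|y|$ would swamp $|x|$ and the argument would collapse. The joint hypothesis $|I| \geq \eta$ with $I \subset [-10,10]$, together with the explicit form of $C_3$, is precisely calibrated to balance these two regimes. Once the sup-hypothesis is verified, Theorem~\ref{theorem4.2} combined with the change of variables of the first paragraph yields the lemma with the explicit constants $\alpha$, $C_3 = \log(1/\kappa_1) + \log\sqrt{2}$, and $C_4 = \kappa_2/\kappa_1^\alpha$.
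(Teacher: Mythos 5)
Your reduction via the commutation relation $a_t u_s = u_{e^{2t}s}a_t$ is the same starting move as the paper (which routes through its Lemma~\ref{Lemma4.9}), and your case split on the holonomy components is in the same spirit as the split on $\max\{|\mathrm{Re}_\omega(\mathbf{s})|,|\mathrm{Im}_\omega(\mathbf{s})|\}\geq\sqrt{\mathrm{inj}/2}$ used there; your Case~2, which exploits $I\subset[-10,10]$ to control $|s_0y|$, handles non-symmetric intervals a bit more cleanly than the paper's, which tacitly assumes a symmetric interval containing~$0$ so that $s=0$ can be plugged in.

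However, there is a genuine gap: you apply Theorem~\ref{theorem4.2} with $\rho'=\kappa_1$, but the hypothesis of that theorem restricts the conclusion to $0<\epsilon<\kappa_1\rho'=\kappa_1^2$, whereas the lemma asserts the bound for all $0<\epsilon<1$. The paper's Lemma~\ref{Lemma4.9} verifies $\sup_s l_{a_tu_s\omega,\mathbf{s}}\geq 1/\kappa_1$, so that $\rho'=1/\kappa_1$ makes $\kappa_1\rho'=1$ and the full range of $\epsilon$ is admissible. Your Case~2 in fact already produces a lower bound of order $1/(2\kappa_1\eta)\geq 1/(2\kappa_1)$, so that half is fine; it is Case~1's threshold $|y|\geq 2\kappa_1/(e^t\eta)$ that is calibrated to the wrong target. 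If you raise the threshold to $|y|\geq 2/(\kappa_1 e^t\eta)$ so Case~1 also yields $\geq 1/\kappa_1$, then with the stated $C_3=\log(\sqrt{2}/\kappa_1)$ the complementary bound in Case~2 becomes only $|y|<\sqrt{2}\,\mathrm{inj}(X,\omega)$, which is no longer small relative to $\mathrm{inj}(X,\omega)$, and the Pythagorean step $|x|\geq\mathrm{inj}(X,\omega)/\sqrt{2}$ together with $|y|\leq|x|/20$ breaks down. The paper's split, with both regimes gated on $\sqrt{\mathrm{inj}/2}$ rather than a $t$-dependent threshold, sidesteps this: in either case the required lower bound on $\sup L$ follows directly from $e^t\geq e^{C_3}/(\eta\,\mathrm{inj})$. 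You would either need to adopt that split or to enlarge $C_3$ (beyond what the lemma states) to make your threshold-based argument close.
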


Although we only require Lemma $\ref{Main_quant_divergence}$ for subsequent use in this paper, we record an auxiliary statement, to emphasize the analogy to quantitative non-divergence results of this kind in the homogeneous setting. We refer the reader to Section 3 of \cite{lindenstrauss2022polynomial1}, where the corresponding statements are proven for the cases of $\mathrm{SL}(2,\mathbb{R}) \times \mathrm{SL}(2,\mathbb{R})$ and $\mathrm{SL}(2,\mathbb{C})$. We recall the following definitions due to Minsky-Weiss.

\begin{definition}[Good functions, Definition 3.1 \cite{minsky2002nondivergence}]
\label{Definition4.4}
Let $\mathcal{F}$ be a collection of continuous functions $\mathbb{R} \rightarrow \mathbb{R}_{+}$, and $I \subset \mathbb{R}$ some interval. For $\theta > 0$ and $f \in \mathcal{F}$, make the following definitions:
\begin{align*}
I_{f,\theta} &= \left\{s \in I: f(s) < \theta \right\} \\
I_{\mathcal{F},\theta} &= \left\{s \in I : \exists f \in \mathcal{F},\ f(s) < \theta \right\} \\
\lvert \lvert f \rvert\rvert_{I} &= \underset{s \in I}{\mathrm{sup}} \ f(s).
\end{align*}
Let $\kappa, \alpha, \rho > 0$. We designate $\mathcal{F}$ as $(\kappa,\alpha,\rho)$-good if it satisfies the following property. For any interval $I \subset \mathbb{R}$ and any $f \in \mathcal{F}$, we have, for $0 < \epsilon < \rho$,
\begin{displaymath}
\frac{\big\lvert I_{f,\epsilon}\big\rvert}{\lvert I \rvert} \leq \kappa \left(\frac{\epsilon}{\lvert\lvert f \rvert\rvert_{I}}\right)^{\alpha}.
\end{displaymath}
We call $\mathcal{F}$ $(\kappa,\alpha)$-good if it is $(\kappa,\alpha,\rho)$-good for all $\rho$.
\end{definition}

\begin{definition}[Sparse-covering property]
\label{Definition4.5}
We say a pair consisting of a collection of continuous functions $\mathcal{F}$ and an interval $I$ as above, $(\mathcal{F},I)$, satisfies the sparse-covering property if the following conditions hold. There exist $\kappa, \alpha, \rho, M > 0$ such that

\begin{enumerate}
\item $\mathcal{F}$ is $(\kappa,\alpha,\rho)$-good.
\item For every $f \in \mathcal{F}$, $\lvert\lvert f \rvert\rvert_{I} \geq \rho$.
\item For every $s \in I$,
\begin{displaymath}
\# \left\{f \in \mathcal{F}: f(s) < \rho \right\} \leq M.
\end{displaymath}
\end{enumerate}
\end{definition}

\begin{prop}
Let $(F,I)$ satisfy the sparse-covering property in the sense of Definition \ref{Definition4.5} for the constants $\kappa, \alpha, \rho, M > 0$. Then, for every $0 < \epsilon < \rho$,

\begin{displaymath}
\frac{\big\lvert I_{\mathcal{F},\epsilon}\big\rvert}{\lvert I \rvert} \leq \kappa M \left(\frac{\epsilon}{\lvert\lvert f \rvert\rvert_{I}}\right)^{\alpha}.
\end{displaymath}
\end{prop}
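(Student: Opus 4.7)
The plan is to combine the union bound $\lvert I_{\mathcal{F},\epsilon} \rvert \leq \sum_{f \in \mathcal{F}} \lvert I_{f,\epsilon} \rvert$ with a per-function refinement that replaces the ambient $\lvert I \rvert$ on the right-hand side by $\lvert I_{f,\rho} \rvert$. Once this is available, summing over $f \in \mathcal{F}$ and applying Tonelli together with the pointwise multiplicity hypothesis $\#\{f : f(s) < \rho\} \leq M$ closes the argument. Naively applying the $(\kappa,\alpha,\rho)$-good property on the full interval $I$ gives only $\lvert I_{f,\epsilon}\rvert \leq \kappa(\epsilon/\rho)^{\alpha} \lvert I \rvert$ per function, and since $\lvert \mathcal{F} \rvert$ is not bounded by $M$ a priori, an unweighted sum over $f$ need not converge; the refinement is what couples the per-function estimate to the multiplicity bound at scale $\rho$.

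The key step is to apply the $(\kappa,\alpha,\rho)$-good property not on $I$ but on each connected component of the open set $I_{f,\rho}$. First I would observe that continuity of $f$ together with $\lvert\lvert f \rvert\rvert_{I} \geq \rho$ prevents any such component $J$ from coinciding with all of $I$, so $J$ has at least one endpoint interior to $I$ at which $f = \rho$; by continuity this forces $\lvert\lvert f \rvert\rvert_{J} = \rho$. Applying $(\kappa,\alpha,\rho)$-good on each such $J$ gives $\lvert J \cap I_{f,\epsilon} \rvert \leq \kappa (\epsilon/\rho)^{\alpha} \lvert J \rvert$, and summing over the components of $I_{f,\rho}$ produces the per-function inequality $\lvert I_{f,\epsilon} \rvert \leq \kappa (\epsilon/\rho)^{\alpha} \lvert I_{f,\rho} \rvert$. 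Summing over $f \in \mathcal{F}$ and applying Tonelli then yields
\begin{equation*}
\sum_{f \in \mathcal{F}} \lvert I_{f,\epsilon} \rvert \leq \kappa (\epsilon/\rho)^{\alpha} \int_{I} \#\{f \in \mathcal{F} : f(s) < \rho\}\, ds \leq \kappa M (\epsilon/\rho)^{\alpha} \lvert I \rvert,
\end{equation*}
which combined with the union bound produces the claimed inequality. I read the $\lvert\lvert f \rvert\rvert_{I}$ on the right-hand side of the stated proposition as a typographical slip for $\rho$; since $\lvert\lvert f \rvert\rvert_{I} \geq \rho$ for every $f \in \mathcal{F}$, the bound with $\rho$ in place of $\lvert\lvert f \rvert\rvert_{I}$ is at least as strong.

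The main obstacle is the component-wise reduction, specifically verifying that $\lvert\lvert f \rvert\rvert_J = \rho$ on each connected component $J$ of $I_{f,\rho}$. This is where the global hypothesis $\lvert\lvert f \rvert\rvert_I \geq \rho$ is essential: it rules out the degenerate possibility that one component fills out all of $I$, after which continuity of $f$ forces $f \equiv \rho$ on the interior boundary of every $J$ and hence $\sup_J f = \rho$. The remaining steps are bookkeeping: a union bound, decomposition of $I_{f,\rho}$ into its connected components, and a Tonelli exchange of summation and integration.
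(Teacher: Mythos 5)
The paper states this proposition without proof, so there is no in-text argument to compare against; your proof is the natural one and is correct. The decomposition of $I_{f,\rho}$ into its relatively open connected components $J$, the observation that $\lvert\lvert f \rvert\rvert_J = \rho$ on each such $J$ (so that the $(\kappa,\alpha,\rho)$-good estimate applied on $J$ yields $\lvert J \cap I_{f,\epsilon}\rvert \le \kappa(\epsilon/\rho)^\alpha\lvert J\rvert$), the re-assembly $\lvert I_{f,\epsilon}\rvert \le \kappa(\epsilon/\rho)^\alpha\lvert I_{f,\rho}\rvert$, and the Tonelli exchange against the pointwise multiplicity bound $\#\{f : f(s)<\rho\}\le M$ are exactly the ingredients of the Kleinbock--Margulis/Minsky--Weiss sparse-covering argument, and you have also correctly identified why the naive union bound applied directly on $I$ fails. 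Two small points. First, your claim that $\lvert\lvert f\rvert\rvert_I \ge \rho$ ``prevents any such component $J$ from coinciding with all of $I$'' is only guaranteed when $I$ is compact (or the supremum is attained); if $I$ is open and $\lvert\lvert f\rvert\rvert_I = \rho$ without the supremum being achieved, one can have $J = I$. This does not harm the argument, because in that degenerate case $f<\rho$ on all of $J = I$ together with $\lvert\lvert f\rvert\rvert_I \ge \rho$ still forces $\lvert\lvert f\rvert\rvert_J = \rho$; the cleaner statement is simply that $\lvert\lvert f\rvert\rvert_J = \rho$ on every component $J$, whether or not $J$ fills $I$. Second, your remark that replacing $\lvert\lvert f\rvert\rvert_I$ by $\rho$ yields a bound that is ``at least as strong'' has the direction reversed: since $\lvert\lvert f\rvert\rvert_I \ge \rho$, one has $(\epsilon/\lvert\lvert f\rvert\rvert_I)^\alpha \le (\epsilon/\rho)^\alpha$, so the $\rho$-version is the weaker inequality. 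The interpretation that $\lvert\lvert f\rvert\rvert_I$ in the displayed conclusion is a slip for $\rho$ is nonetheless almost certainly correct (the $f$ is unbound in the statement, and $\rho$ is what the argument actually produces), so your proof establishes the proposition as it was surely intended; just be aware that the comparison you offered to justify the substitution is backwards.
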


\begin{prop}[\cite{minsky2002nondivergence}]
For any $(X,\omega) \in \Omega_1\mathcal{T}_g(\sigma)$, and $\mathbf{s} \in \mathcal{S}_{\omega}$, define a function $l_{\omega,\mathbf{s}}(s): \mathbb{R} \rightarrow \mathbb{R}_{+}$ by $l_{\omega,\mathbf{s}}(s) =  l_{u_s \omega,\mathbf{s}}$. That is, for a point in moduli space, and a saddle connection on the surface, consider the function measuring the length of the saddle connection along a horocycle segment. Consider the collection of functions 

\begin{displaymath}
\mathcal{F} = \left\{l_{\omega,\mathbf{s}} : (X,\omega) \in \Omega_1\mathcal{T}_g(\sigma), \mathbf{s} \in \mathcal{S}_{\omega}\right\}.
\end{displaymath}
$\mathcal{F}$ is $(2,1)$-good in the sense of Definition $\ref{Definition4.4}$.
\end{prop}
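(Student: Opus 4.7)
The plan is to reduce the statement to a single one-variable inequality. For a saddle connection $\mathbf{s}$ with holonomy vector $\mathrm{Hol}_\omega(\mathbf{s}) = (x, y) \in \mathbb{R}^2$, the action of $u_s$ on holonomy gives $\mathrm{Hol}_{u_s \omega}(\mathbf{s}) = (x+sy, y)$, so
\begin{displaymath}
l_{\omega, \mathbf{s}}(s) = \sqrt{(x+sy)^2 + y^2}.
\end{displaymath}
Since the property of being $(\kappa, \alpha)$-good is invariant under positive rescaling of the function and under translation of the variable, I would first handle the degenerate case $y = 0$, in which $l_{\omega, \mathbf{s}}$ is the constant $|x|$ and the sublevel sets are either empty or all of $I$, so $(2,1)$-goodness is immediate. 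When $y \neq 0$, factoring out $|y|$ and translating by $s_0 = -x/y$ reduces the claim to showing that the single universal function $g(s) := \sqrt{s^2+1}$ is $(2, 1)$-good.

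For $g$, I would proceed by an elementary case analysis on an arbitrary interval $I = [a, b]$. Since $g$ is even and convex with its minimum value $1$ attained at the origin, $\lvert\lvert g \rvert\rvert_I = \sqrt{L^2+1}$ where $L := \max(\lvert a \rvert, \lvert b \rvert)$, while the sublevel set $\{g < \epsilon\}$ equals $\emptyset$ when $\epsilon \leq 1$ (making the inequality trivial) and equals $(-M, M)$ with $M := \sqrt{\epsilon^2 - 1}$ when $\epsilon > 1$. The goal therefore reduces to showing
\begin{displaymath}
\bigl| I \cap (-M, M) \bigr| \cdot \sqrt{L^2 + 1} \leq 2\epsilon \cdot |I|.
\end{displaymath}
I would split on whether $0 \in I$ or not, and further on how $M$ compares with the distances from the endpoints of $I$ to $0$. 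In each of the finitely many resulting configurations, squaring reduces the estimate to a polynomial inequality in the endpoint positions, $L$, and $M$; applying $\epsilon^2 = M^2 + 1$ each time turns this into a sum of manifestly non-negative terms.

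The main obstacle is purely bookkeeping rather than conceptual: $g$ is $1$-Lipschitz with its unique minimum at $0$, so all of its sublevel sets are intervals around $0$, and the estimate could alternatively be packaged via the Lipschitz inequality $g(s) \geq \lvert\lvert g \rvert\rvert_I - \mathrm{dist}(s, \mathrm{argmax}_I g)$. The tightness of the elementary quadratic inequalities appearing after squaring is what pins down the constant as $2$ rather than something larger. Once $g$ has been shown to be $(2,1)$-good, translation- and scaling-invariance immediately transfer $(2,1)$-goodness to every element $l_{\omega,\mathbf{s}} \in \mathcal{F}$, completing the proof.
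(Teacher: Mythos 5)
The paper states this proposition with a citation to Minsky--Weiss and supplies no proof, so there is nothing in the paper itself for you to match; your proposal is a self-contained verification of a quoted fact. That said, the verification is essentially correct. The reduction is exactly right: $\mathrm{Hol}_{u_s\omega}(\mathbf{s}) = (x+sy,\,y)$ so $l_{\omega,\mathbf{s}}(s) = \sqrt{(x+sy)^2+y^2}$; the $y=0$ case is a nonzero constant (note $(x,y)\neq(0,0)$ since $\mathbf{s}$ is a saddle connection), for which $(2,1)$-goodness is immediate; for $y\neq 0$ you factor out $|y|$ and translate in $s$, both operations preserving $(\kappa,\alpha)$-goodness, and land on the single model function $g(s)=\sqrt{s^2+1}$. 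The remaining casework on $I$ versus the sublevel interval $(-M,M)$ with $M=\sqrt{\epsilon^2-1}$ does close with constant $2$: the key elementary inequality $(\epsilon^2-1)(L^2+1) \leq \epsilon^2 L^2$ when $M\leq L$ (equivalently $M\sqrt{L^2+1}\leq \epsilon L$) handles the symmetric case, and the asymmetric cases (including $0\notin I$, where $g$ is monotone and $\tfrac{M-a}{b-a} < \tfrac{M}{b}$) reduce to it with the extra factor of $2$ to spare. The one loose remark is the suggested "Lipschitz packaging" $g(s)\geq \lVert g\rVert_I - \mathrm{dist}(s,\mathrm{argmax}_I g)$: as written, this bounds the measure of the sublevel set by $|I| - (\lVert g\rVert_I-\epsilon)$, which does not by itself yield the constant $2$ without also exploiting convexity and the location of the minimum; so the explicit casework you sketch is the load-bearing argument, not the Lipschitz one-liner.
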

Unfortunately, there are $I \subset \mathbb{R}$ for which $(\mathcal{F},I)$ doesn't satisfy the sparse-covering property. However, the arguments in Section 6 of Minsky-Weiss \cite{minsky2002nondivergence} show:

\begin{lemma}
\label{Lemma4.8}
There exists a subset $\mathcal{F}_0 \subset \mathcal{F} = \left\{l_{\omega,\mathbf{s}} : (X,\omega) \in \Omega_1\mathcal{T}_g(\sigma), \mathbf{s} \in \mathcal{S}_{\omega}\right\}. $ so that $(\mathcal{F}_0,I)$ satisfies the sparse-covering property for any $I \subset \mathbb{R}$ and for which the following property holds. There are positive constants $\kappa_1$, $\kappa_2$, $\alpha$ depending only on the stratum $\Omega_1\mathcal{M}_g(\sigma)$ such that if every $f(s) \in \mathcal{F}_0$, an interval $I \subset \mathbb{R}$, and \ $\rho' > 0$ satisfy:

\begin{displaymath}
\underset{s \in I}{\mathrm{sup}} \ f(s) \geq \rho',
\end{displaymath}
then for any $0 < \epsilon < \kappa_1\rho'$ we have
\begin{displaymath}
{\big|\left\{s \in I : \mathrm{inj}(u_s (X,\omega)) < \epsilon \right\}\big|} < \kappa_2 \cdot \left(\frac{\epsilon}{\rho'}\right)^{\alpha}|I|.
\end{displaymath}
\end{lemma}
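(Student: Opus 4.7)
The plan is to import the construction from Section~6 of Minsky-Weiss \cite{minsky2002nondivergence}, recast in the current notation. The geometric input is a uniform Margulis-type bound on short saddle connections: there exist $\rho_0, M_0 > 0$, depending only on the stratum, such that for any $(X,\omega) \in \Omega_1\mathcal{M}_g(\sigma)$, the number of saddle connections with $l_{\omega,\mathbf{s}} < \rho_0$ is at most $M_0$. This is a standard collar-type bound: two short saddle connections on a translation surface must be essentially disjoint or nearly parallel, and a Euler-characteristic count on $S_g$ bounds the number of such configurations.

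Given this short-saddle bound, I would construct $\mathcal{F}_0$ by fixing, for each $x = (X,\omega) \in \Omega_1\mathcal{T}_g(\sigma)$, a finite set $\mathcal{E}(x) \subset \mathcal{S}_\omega$ of saddle connections triangulating the underlying flat surface (for instance, the Delaunay triangulation of $(X,|\omega|)$), and setting
\begin{displaymath}
\mathcal{F}_0 := \{ l_{\omega,\mathbf{s}} : x \in \Omega_1\mathcal{T}_g(\sigma),\ \mathbf{s} \in \mathcal{E}(x)\} \subset \mathcal{F}.
\end{displaymath}
Each $f \in \mathcal{F}_0$ is the Euclidean norm $|u_s \cdot \mathrm{Hol}_\omega(\mathbf{s})| = \sqrt{(a+bs)^2 + b^2}$ of a linear function of $s$, so condition~(1) of the sparse-covering property, namely $(2,1)$-goodness, is inherited from the preceding proposition. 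For condition~(2), $\|f\|_I \geq \rho$ for an appropriate $\rho = \rho(\sigma) > 0$: a triangulation of a unit-area surface with a stratum-bounded number of triangles must contain an edge of length bounded below, and after the horocycle acts over any nondegenerate $I$ some edge becomes (or remains) long.

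The principal difficulty lies in condition~(3), the local finiteness: for every $s \in I$ at most $M$ functions $f \in \mathcal{F}_0$ take a value less than $\rho$ at $s$. This is precisely where the short-saddle-connection bound enters: if $f = l_{\omega,\mathbf{s}} \in \mathcal{F}_0$ satisfies $f(s) < \rho < \rho_0$, then $\mathbf{s}$ is a short saddle connection on $u_s \cdot (X,\omega)$, and the uniform bound yields at most $M_0$ such $\mathbf{s}$. The ambiguity in the choice of triangulation $\mathcal{E}(x)$ (it may jump as $x$ varies) contributes at most a stratum-dependent multiplicity, which is the combinatorial crux of Minsky-Weiss and is handled by observing that Delaunay triangulations of short-saddle-free surfaces are locally canonical. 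This gives a stratum-dependent $M$.

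Once $(\mathcal{F}_0, I)$ is shown to satisfy the sparse-covering property, the inequality on $\{s : \mathrm{inj}(u_s(X,\omega)) < \epsilon\}$ follows from the preceding sparse-covering proposition: any saddle connection realizing the injectivity radius at $u_s \cdot x$ is dominated (up to a stratum-dependent constant) by some edge of the Delaunay triangulation, so the injectivity-radius sublevel set is contained in $I_{\mathcal{F}_0, C\epsilon}$ for some $C = C(\sigma)$. The hard part throughout is the combinatorial accounting ensuring that $\mathcal{E}(x)$ varies in a controlled fashion along the horocycle orbit; this is the substance of the inductive argument of Minsky-Weiss, and aside from that reduction the statement is essentially a restatement of Theorem~\ref{theorem4.2} packaged in the $(\kappa,\alpha,\rho)$-good formalism.
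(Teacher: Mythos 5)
The paper offers no proof of Lemma~\ref{Lemma4.8}; it attributes the result wholesale to Section~6 of Minsky--Weiss \cite{minsky2002nondivergence}. Your proposal attempts to reconstruct that argument, and the broad outline --- build $\mathcal{F}_0$ from a combinatorial skeleton of saddle connections, verify the sparse-covering conditions, then feed the result into the preceding proposition --- is faithful to the source. However, your stated geometric input is incorrect, and the verifications of conditions~(2) and~(3) leave gaps that are not merely deferred bookkeeping.

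The claim that there are stratum-dependent $\rho_0, M_0 > 0$ so that \emph{every} unit-area surface in the stratum has at most $M_0$ saddle connections of length below $\rho_0$ is false. Consider a degenerating family in which a small flat torus of area $\epsilon^2$ is grafted onto the rest of the surface. The saddle connections supported in the small piece are $\epsilon$ times primitive lattice vectors, so the number of them of length below a fixed $\rho_0$ grows like $(\rho_0/\epsilon)^2$, which is unbounded as $\epsilon \to 0$. What is actually true --- and what Minsky--Weiss use --- is a bound on the number of \emph{pairwise non-parallel} short saddle connections, equivalently on the combinatorial complexity of the subcomplex they span; the collar-type disjointness argument you invoke bounds homotopy classes, not individual saddle connections. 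Your verification of condition~(3) leans on the false count. Condition~(2) has a related gap: if $\mathcal{F}_0$ restricted to the orbit of a fixed $(X,\omega)$ contains the Delaunay edges of $u_t x$ for \emph{all} $t$, then an edge that is Delaunay only at times $t$ outside $I$ has no reason to satisfy $\|f\|_I \geq \rho$, so the whole family fails~(2) for short intervals $I$. The resolution in Minsky--Weiss is to construct, inductively and adapted to the interval, a hierarchy of saddle-connection complexes whose elements are tracked by boundedly many length functions, each of which attains a controlled maximum on $I$. This inductive construction, which you correctly identify as the crux, is the substance of the lemma and is not recovered by the Delaunay-triangulation shortcut.
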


\begin{lemma}
\label{Lemma4.9}
There exists a subset $\mathcal{F}_0 \subset \mathcal{F} = \left\{l_{\omega,\mathbf{s}} : (X,\omega) \in \Omega_1\mathcal{T}_g(\sigma), \mathbf{s} \in \mathcal{S}_{\omega}\right\}. $ so that $(\mathcal{F}_0,I)$ satisfies the sparse-covering property for any $I \subset \mathbb{R}$ and for which the following property holds. There are positive constants $\kappa_1$, $\kappa_2$, $\alpha$ depending only on the stratum $\Omega_1\mathcal{M}_g(\sigma)$ such that if every $f(s) \in \mathcal{F}_0$, a symmetric interval $I \subset \mathbb{R}$ with $\lvert I \rvert > \eta$, and \ $\rho' > 0$ satisfy:

\begin{displaymath}
\underset{f \in \mathcal{F}_0}{\mathrm{inf}} \ f(0) \geq \rho',
\end{displaymath}
then for any $0 < \epsilon, \eta < 1$ we have
\begin{displaymath}
{\big|\left\{r \in I : \mathrm{inj}(a_t u_s (X,\omega)) < \epsilon \right\}\big|} < \kappa_2 \cdot \left({\epsilon}\right)^{\alpha}|I|,
\end{displaymath}
provided $t \geq \mathrm{max} \left\{ \Big\lvert \mathrm{log}\left(\sqrt{\frac{\rho'}{2}}\right)^{-1} \Big\rvert,   \Big\lvert \mathrm{log} \ \left(\eta \sqrt{\frac{\rho'}{2}}\right)^{-1} \Big\rvert \right\}$.
\end{lemma}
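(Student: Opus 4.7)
The plan is to derive Lemma~\ref{Lemma4.9} directly from Lemma~\ref{Lemma4.8} via the commutation relation $a_t u_s = u_{e^t s}\, a_t$ in $\mathrm{SL}(2,\mathbb{R})$. The subset $\mathcal{F}_0 \subset \mathcal{F}$ is taken to be the one already produced by Lemma~\ref{Lemma4.8}, so the sparse-covering assertion is inherited, and only the quantitative injectivity bound requires proof. Setting $y := a_t(X,\omega)$ and $s' := e^t s$, the commutation gives $a_t u_s (X,\omega) = u_{s'}\, y$ and the bijection $s \mapsto s'$ carries the symmetric interval $I$ onto the symmetric interval $I' := e^t I$, with $|I'| = e^t |I| \geq e^t \eta$. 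Consequently, controlling the measure of $\{s \in I : \mathrm{inj}(a_t u_s(X,\omega)) < \epsilon\}$ is equivalent to controlling the measure of $\{s' \in I' : \mathrm{inj}(u_{s'}\, y) < \epsilon\}$, which is exactly the form required as input to Lemma~\ref{Lemma4.8}.

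Next, I would convert the pointwise hypothesis $\inf_{f \in \mathcal{F}_0} f(0) \geq \rho'$ into the supremum hypothesis of Lemma~\ref{Lemma4.8}. For a saddle connection $\mathbf{s}$ with holonomy $(x,w)^{\intercal}$ at $(X,\omega)$ satisfying $x^2 + w^2 \geq \rho'^{\,2}$, its length at $u_{s'}\, y$ is
\begin{displaymath}
l_{y,\mathbf{s}}(s')^2 \;=\; \bigl(e^{t/2} x + s' e^{-t/2} w\bigr)^2 + e^{-t} w^2.
\end{displaymath}
Since $I' \supset [-e^t \eta/2,\, e^t \eta/2]$, evaluation at the endpoint whose sign matches that of $xw$ gives
\begin{displaymath}
\sup_{s' \in I'} l_{y,\mathbf{s}}(s')^2 \;\geq\; e^t\bigl(|x| + \eta |w|/2\bigr)^2 \;\geq\; e^t\bigl(x^2 + \eta^2 w^2/4\bigr) \;\geq\; e^t \eta^2 \rho'^{\,2} / 4,
\end{displaymath}
where the final step uses $\eta \leq 1$. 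Thus $\rho'' := e^{t/2}\eta \rho'/2$ is a uniform lower bound on $\sup_{s' \in I'} l_{y,\mathbf{s}}$ across the saddle connections indexing $\mathcal{F}_0$.

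Invoking Lemma~\ref{Lemma4.8} at the point $y$ over $I'$ with threshold $\rho''$ then yields, for $\epsilon < \kappa_1 \rho''$,
\begin{displaymath}
\bigl|\{s' \in I' : \mathrm{inj}(u_{s'}\, y) < \epsilon\}\bigr| \;<\; \kappa_2\, (\epsilon/\rho'')^{\alpha} |I'|,
\end{displaymath}
and pulling back through $s' = e^t s$ cancels the factor $e^t$ on both sides to produce
\begin{displaymath}
\bigl|\{s \in I : \mathrm{inj}(a_t u_s (X,\omega)) < \epsilon\}\bigr| \;<\; \kappa_2\, (\epsilon/\rho'')^{\alpha} |I|.
\end{displaymath}
The lower bound on $t$ appearing in Lemma~\ref{Lemma4.9} is calibrated precisely so that $\rho'' \geq 1$ (up to an absolute constant which may be absorbed into $\kappa_2$); then $(\epsilon/\rho'')^{\alpha} \leq \epsilon^{\alpha}$ and the desired estimate follows.

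The only mildly delicate step is the supremum estimate: one must verify that even when $|x|$ is tiny compared to $|w|$, so the saddle connection is almost horizontal at $y$, the horocycle horizon $e^t\eta$ still rotates the connection enough to force $\sup l_{y,\mathbf{s}}(s') \gtrsim e^{t/2}\eta\rho'$. The $\eta$-dependence in the hypothesized lower bound on $t$ is exactly what compensates for this near-horizontal regime; no further ingredients beyond those already in the excerpt are needed, and the estimate runs uniformly through both regimes $|x| \gtrsim |w|$ and $|x| \lesssim |w|$.
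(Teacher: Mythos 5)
Your proof is correct and takes essentially the same route as the paper: conjugate $a_tu_s$ to a pure horocycle push at the renormalized base point $a_t(X,\omega)$, bound $\sup_{s'\in I'} l_{y,\mathbf{s}}$ from below in terms of $e^{t/2}\eta\rho'$, then feed that threshold into Lemma~\ref{Lemma4.8}. The paper handles the near-horizontal and near-vertical regimes as two explicit cases, whereas you fold them into a single inequality $(|x|+\eta|w|/2)^2\geq \eta^2(x^2+w^2)/4$; this is cleaner (and avoids a sign typo in the paper's case 2) at the cost of a slightly larger, but constant-factor-absorbable, lower bound on $t$ than the one advertised in the lemma's statement.
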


\begin{proof}
We first note that since $\underset{f \in \mathcal{F}_0}{\mathrm{inf}} \ f(0) \geq \rho'$, we have $\underset{f \in \mathcal{F}_0}{\mathrm{sup}} \ f(0) \geq \rho'$. That is, the longest saddle connection in the family $\mathcal{F}_0$ on $(X,\omega)$ is of length greater than $\rho'$. Let $\lvert \mathrm{Re}_{\omega}(\mathbf{s}) \rvert$, $\lvert \mathrm{Im}_{\omega}(\mathbf{s}) \rvert$, denote, respectively the horizontal and vertical lengths of $s$ with respect to $\omega$. Then,

\begin{align}
\label{equation4.1}
\underset{r \in I}{\mathrm{sup}} \ \lvert \mathrm{Re}_{a_tu_r\omega}(\mathbf{s}) \rvert &\geq e^{t} \lvert \mathrm{Re}_{\omega}(\mathbf{s}) \rvert.
\end{align}
Since $\lvert \mathrm{Hol}_{\omega}(\mathbf{s}) \rvert \geq \rho'$, $\mathrm{max} \left\{\lvert \mathrm{Re}_{\omega}(\mathbf{s})\rvert, \lvert \mathrm{Im}_{\omega}(\mathbf{s})\rvert \right\} \geq \sqrt{\frac{\rho'}{2}}$. Suppose $\lvert \mathrm{Re}_{\omega}(\mathbf{s}) \rvert \geq  \sqrt{\frac{\rho'}{2}}$. Then, we have,

\begin{align}
\underset{s \in I}{\mathrm{sup}} \ l_{a_tu_s\omega,\mathbf{s}} \geq
\underset{s \in I}{\mathrm{sup}} \ \lvert \mathrm{Re}_{a_tu_s\omega}(\mathbf{s}) \rvert &\geq e^{t} \lvert \mathrm{Re}_{\omega}(\mathbf{s}) \rvert \\
&\geq e^{t} \sqrt{\frac{\rho'}{2}}.
\end{align}
Hence, $\underset{s \in I}{\mathrm{sup}} \ l_{a_tu_s\omega,\mathbf{s}} \geq \frac{1}{\kappa_1}$, so long as  $t \geq \Big\lvert \left(\mathrm{log}\sqrt{\frac{\rho'}{2}}\right)^{-1} \Big\rvert + \big\lvert \mathrm{log}\left(\frac{1}{\kappa_1}\right)\big\rvert$. On the other hand, suppose $\lvert \mathrm{Im}_{\omega}(\mathbf{s}) \rvert \geq  \sqrt{\frac{\rho'}{2}}$. Then, 

\begin{align}
\underset{s \in I}{\mathrm{sup}} \ \lvert \mathrm{Im}_{a_tu_s\omega}(\mathbf{s}) \rvert &\geq e^{t} \eta \lvert \mathrm{Im}_{\omega}(\mathbf{s}) \rvert.
\end{align}
Therefore, $\underset{s \in I}{\mathrm{sup}} \ l_{a_tu_s\omega,\mathbf{s}} \geq \frac{1}{\kappa_1}$, so far as  $t \geq \Big\lvert \left(\mathrm{log} \ \eta \sqrt{\frac{\rho'}{2}}\right)^{-1} \Big\rvert + \big\lvert \mathrm{log}\left(\frac{1}{\kappa_1}\right)\big\rvert$. By Lemma $\ref{Lemma4.8}$, and the fact that $a_tu_s = u_{se^{2t}}a_t$, we have

\begin{displaymath}
\frac{{\big|\left\{s \in I : \mathrm{inj}(a_t u_s (X,\omega)) < \epsilon \right\}\big|}}{\lvert I \rvert} = \frac{{\big|\left\{r \in I' : \mathrm{inj}(u_sa_t (X,\omega)) < \epsilon \right\}\big|}}{\lvert I' \rvert} < \frac{\kappa_2}{\kappa_1^{\alpha}} \cdot \left({\epsilon}\right)^{\alpha},
\end{displaymath}
where $I':= e^{2t}I$, given that $t \geq \Big\lvert \left(\mathrm{log}\sqrt{\frac{\rho'}{2}}\right)^{-1} \Big\rvert + \big\lvert \mathrm{log}\left(\frac{1}{\kappa_1}\right)\big\rvert$ (recall $\eta$ < 1).
\end{proof}

\begin{proof}[Proof of Lemma \ref{Main_quant_divergence}]
Note that, by the definition of injectivity radius, if $\mathrm{inj}(X,\omega) \geq \rho'$, then using the notation of Lemma $\ref{Lemma4.9}$, $\underset{f \in \mathcal{F}_0}{\mathrm{inf}} \ f(0) \geq \rho'$. Lemma \ref{Main_quant_divergence} follows by the expressions given for the explicit constants $C_3$ and $C_4$ in the statement of the Lemma.
\end{proof}

\section{Margulis function}
\label{Section4}

\subsection{Non-divergence and injectivity estimates}

We combine some results in the literature into a proposition quantifying the injectivity of the projection map from Teichm{\"u}ller space to moduli space as a point varies along an orbit.

\begin{theorem}
[\cite{Eskin2001AsymptoticFO},\cite{athreya2006quantitative}]
\label{athreya_function}
There exists a continuous, proper, $\mathrm{SO}(2)$-invariant function $u: \Omega_1\mathcal{M}_g(\sigma) \rightarrow [2,\infty)$ such that there exists a constant $\kappa_3$ depending only on the stratum so that for all $x = (X,\omega) \in \Omega_1\mathcal{M}_g(\sigma)$ and all $t > 0$,
\begin{equation}
\label{athreya_inquality}
e^{-\kappa_3 t}u(x) \leq u(a_tx) \leq e^{\kappa_3 t}u(x).
\end{equation}

\end{theorem}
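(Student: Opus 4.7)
The plan is to construct $u$ as (a regularization of) the reciprocal of the shortest saddle connection length, following the strategy of Eskin--Masur and Athreya. Concretely, set
\begin{displaymath}
\alpha(X,\omega) := \sup_{\mathbf{s} \in \mathcal{S}_\omega} \frac{1}{|\mathrm{Hol}_\omega(\mathbf{s})|} = \frac{1}{\mathrm{inj}(X,\omega)},
\end{displaymath}
and then put $u(x) := \max(2, \alpha(x))$ (after a smoothing described below). The $\mathrm{SO}(2)$-invariance is immediate because rotations are isometries of the flat metric $|\omega|$ and hence preserve the lengths of all saddle connections pointwise. Properness follows from Mumford's compactness criterion in its stratum incarnation (Masur--Smillie): a sequence $x_n$ leaves every compact set of $\Omega_1\mathcal{M}_g(\sigma)$ if and only if $\mathrm{inj}(x_n) \to 0$, which is equivalent to $\alpha(x_n) \to \infty$.

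The log-Lipschitz bound is the heart of the matter, and it is a purely linear-algebraic computation on holonomy vectors. If $\mathbf{s}$ is any saddle connection with $\mathrm{Hol}_\omega(\mathbf{s}) = (p,q) \in \mathbb{R}^2$, then $\mathrm{Hol}_{a_t\omega}(\mathbf{s}) = (e^{t/2}p, e^{-t/2}q)$, so
\begin{displaymath}
e^{-t/2} |\mathrm{Hol}_\omega(\mathbf{s})| \leq |\mathrm{Hol}_{a_t\omega}(\mathbf{s})| \leq e^{t/2}|\mathrm{Hol}_\omega(\mathbf{s})|.
\end{displaymath}
Taking reciprocals and then suprema over $\mathbf{s} \in \mathcal{S}_\omega$ (which is preserved setwise under the $\mathrm{SL}(2,\mathbb{R})$-action) yields $e^{-t/2}\alpha(x) \leq \alpha(a_tx) \leq e^{t/2}\alpha(x)$, which transfers immediately to $u$ and gives the claimed inequality with $\kappa_3 = 1/2$.

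The main obstacle is continuity. The naive supremum $\alpha$ is only lower semi-continuous: as one crosses a codimension-one locus where a new saddle connection becomes the shortest, $\alpha$ can jump. Eskin--Masur handle this by replacing $\alpha$ with a finite maximum, over a carefully chosen collection of "primitive subsurface configurations" (or, equivalently, a Siegel--Veech-type function that sums reciprocals of holonomies over an indexed family rather than taking the sup), and proving that on any compact set only finitely many configurations can achieve the maximum, so the resulting function is continuous. Athreya sharpens this to obtain the precise form stated here. Crucially, each individual configuration still consists of lengths of integer-period vectors transforming under $a_t$ as above, so the log-Lipschitz estimate with constant $\kappa_3$ (now depending on the combinatorics of the stratum through the number of configurations involved, but not on $x$) is inherited from the scalar computation. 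Finally, taking the maximum with $2$ only enforces the lower bound on the range $[2,\infty)$ and is compatible with every property we need.
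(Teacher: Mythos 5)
The paper does not prove this statement; Theorem~\ref{athreya_function} is quoted as a citation to Eskin--Masur and Athreya, so there is no in-paper argument to compare against. Judged on its own merits, your proposal is on the right track, and in fact it is \emph{simpler} than you seem to believe: the detour you take to address continuity is a detour around a problem that isn't there.

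Your computation of the log-Lipschitz bound for $\alpha = 1/\mathrm{inj}$ is correct (with $\kappa_3 = 1/2$ for $a_t = \mathrm{diag}(e^{t/2}, e^{-t/2})$; the paper is inconsistent about the normalization of $a_t$, but this only rescales $\kappa_3$), and so are the $\mathrm{SO}(2)$-invariance and the properness via Masur--Smillie compactness of the thick part of a stratum. The place where you go wrong is the claim that $\alpha$ ``is only lower semi-continuous'' and ``can jump.'' The shortest saddle connection length \emph{is} continuous on a stratum. When one saddle connection overtakes another as the shortest, the infimum has a corner, not a jump. When a saddle connection $\mathbf{s}$ ceases to exist along a path (a zero moves onto its interior), it degenerates into a concatenation of shorter saddle connections, so the infimum does not jump up; when $\mathbf{s}$ reappears, it does so with length at least the sum of the lengths of its former pieces, so the infimum does not jump down. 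Locally, on a compact neighborhood, the infimum is attained among a finite set of candidate classes whose holonomies vary continuously, which is the standard reason $\mathrm{inj}$ is continuous and used freely in the literature (indeed the paper uses $\mathrm{inj}$ as a continuous function throughout, e.g.\ in Lemma~\ref{Main_quant_divergence}). Hence $u = \max(2, 1/\mathrm{inj})$ already has all four properties asked for, and the statement as written is elementary.

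The reason Eskin--Masur and Athreya build a more intricate function out of ``complexes'' (or sums of reciprocal lengths over indexed configurations) is not continuity at all: it is to obtain the Margulis-type averaging inequality $\frac{1}{2\pi}\int_0^{2\pi} u(a_t r_\theta x)\,d\theta \leq c(t)\,u(x) + b$ with $c(t)\to 0$, which $1/\mathrm{inj}$ does \emph{not} satisfy (two transverse short saddle connections can stay simultaneously short under the circle average of $a_t$). That averaging inequality drives their nondivergence and large-deviations estimates, but it is not part of the statement quoted here, and the paper uses Theorem~\ref{athreya_function} only through Proposition~\ref{inj_radius_proposition}, i.e.\ only through the log-Lipschitz bound. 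So your appeal to the full Eskin--Masur machinery is citing strictly more than this theorem requires; the part of your argument before the ``main obstacle'' paragraph already constitutes a proof.
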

Recall that for $\tilde{x} = (\tilde{X},\tilde{\omega})$ we defined in Subsection \ref{unstable_stable_foliations} the restricted projection map $\pi|_{B_{r}(\tilde{X},\tilde{\omega}) } : \Omega_1\mathcal{T}_g(\sigma) \rightarrow \Omega_1\mathcal{M}_g(\sigma)$. By non-divergence statements close to those in Section \ref{Section3}, the following is proven in \cite{EMM_effective_simple}.

\begin{lemma}[Lemma 2.6, \cite{EMM_effective_simple}]
There exists a constant $C_5 > 0$ depending only on the stratum so that for all $\tilde{x} = (\tilde{X},\tilde{\omega}) \in \Omega_1\mathcal{T}_g(\sigma)$ and every $0 < r \leq u(x)^{-C_5}$ the restricted projection map $\pi|_{B_{r}(\tilde{x}) }$ is injective.
\end{lemma}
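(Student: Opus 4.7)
The plan is a proof by contradiction: a failure of injectivity produces a non-trivial mapping class element that moves $\tilde{x}$ a short AGY-distance, and a quantitative form of Mumford compactness converts small displacement into a short saddle connection, contradicting the bound on $u(x)$.

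\textbf{Step 1.} Suppose $\pi|_{B_r(\tilde{x})}$ is not injective. Then there exist distinct $\tilde{y}_1,\tilde{y}_2 \in B_r(\tilde{x})$ and a non-trivial $\gamma \in \mathrm{Mod}(S_g)$ with $\gamma \cdot \tilde{y}_1 = \tilde{y}_2$. The mapping class group acts isometrically on $\Omega_1\mathcal{T}_g(\sigma)$ for the AGY metric, because the collection of saddle-connection holonomies defining the AGY norm is equivariant. By Lemma \ref{exponential_map}.3, each point of $B_r(\tilde{x})$ lies within AGY-distance $C_1 r$ of $\tilde{x}$ (using that for $r<1/C_1$ the period ball is contained in the image of the exponential map of the AGY-ball of radius $C_1 r$). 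The triangle inequality then gives
\begin{displaymath}
d_{AGY}(\tilde{x},\gamma\tilde{x}) \leq d_{AGY}(\tilde{x},\tilde{y}_1) + d_{AGY}(\gamma\tilde{y}_1,\gamma\tilde{x}) \leq 2C_1 r.
\end{displaymath}

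\textbf{Step 2.} The heart of the proof is a quantitative properness statement: there exist constants $c,A > 0$ depending only on the stratum so that for every non-trivial $\gamma \in \mathrm{Mod}(S_g)$,
\begin{displaymath}
d_{AGY}(\tilde{x},\gamma\tilde{x}) \geq c \cdot \mathrm{inj}(X,\omega)^{A}.
\end{displaymath}
This should be proved by case analysis on $\gamma$, using that it acts on $H^1(X,Z(\omega),\mathbb{C})$ through a non-trivial element of $\mathrm{Sp}(2g,\mathbb{Z}) \ltimes \mathbb{Z}^{h}$ via the Kontsevich--Zorich cocycle. If $\gamma$ acts non-trivially on absolute cohomology, then $\gamma - I$ is a nonzero integral operator, and smallness of $|(\gamma-I)\phi_\omega(\tilde{x})|$ is a Diophantine condition forcing $\phi_\omega(\tilde{x})$ near a proper rational subspace; some integer combination of edge periods --- hence the holonomy of a saddle connection --- must then be small. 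If $\gamma$ lies in the Torelli group, it is a product of Dehn twists along a multicurve whose absolute classes cancel, and its relative-cohomology action is translation by an integer cocycle supported on the twisted curves, whose AGY-smallness forces those curves to be short. Either way, a saddle connection of length $\ll r^{1/A}$ appears.

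\textbf{Step 3.} Combine with the standard estimate $u(x) \leq c' \cdot \mathrm{inj}(x)^{-B}$ for some $B > 0$ depending only on the stratum, which is part of the construction of the Athreya function in \cite{Eskin2001AsymptoticFO}, \cite{athreya2006quantitative}. Together with Step 2, a failure of injectivity at radius $r$ forces $r \geq c'' \cdot u(x)^{-1/(AB)}$; choosing $C_5 > 1/(AB)$ large enough that additionally $u(x)^{-C_5} < 1/C_1$ (automatic since $u \geq 2$) contradicts $r \leq u(x)^{-C_5}$ and proves the lemma. The main obstacle is Step 2, especially the Torelli subcase, where one must produce an explicit polynomial comparison between the AGY norm of the relative cocycle implementing a multi-twist and the flat lengths of the curves it twists --- the polynomial nature of this bound is what guarantees that $C_5$ is an absolute stratum constant rather than dependent on auxiliary data of $\tilde{x}$.
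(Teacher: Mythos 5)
The paper does not supply its own proof of this lemma; it is cited verbatim as Lemma 2.6 of \cite{EMM_effective_simple}, so there is no internal argument to compare against. Evaluated on its own, your sketch has the right shape (non-injectivity $\Rightarrow$ a non-trivial $\gamma \in \mathrm{Mod}(S_g)$ with small AGY displacement of $\tilde{x}$ $\Rightarrow$ short saddle connection $\Rightarrow$ contradiction via the Athreya function) but two real problems remain.

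First, Step 2 is the entire content of the lemma and is currently a hand-wave rather than a proof. The claimed systolic-type inequality $d_{AGY}(\tilde{x},\gamma\tilde{x}) \geq c\,\mathrm{inj}(X,\omega)^{A}$ with $c,A$ depending only on the stratum needs a genuine argument, and the two subcases you sketch are each nontrivial. In the absolute-cohomology case, ``smallness of $|(\gamma-I)\phi_\omega(\tilde{x})|$ is a Diophantine condition forcing a short integer period'' is not automatic: one must control which integer combination of periods becomes small and show it is actually represented by a saddle connection (a relative cycle need not have a geodesic representative of comparable length without care). In the Torelli case, passing from ``$\gamma$ acts by translation by an integer relative cocycle'' to ``the twisted curves are short'' requires a quantitative comparison between the AGY norm of that cocycle and the flat lengths of the multicurve; you flag this yourself, but it is precisely the step that must be carried out to get a stratum-only constant $C_5$. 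As written, the proposal asserts the conclusion of Step 2 rather than proving it.

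Second, the inequality in Step 3 is in the wrong direction for the argument to close. From Steps 1--2 you obtain an upper bound $\mathrm{inj}(x) \leq (2C_1 r/c)^{1/A}$. To contradict $r \leq u(x)^{-C_5}$ you need a lower bound on $\mathrm{inj}(x)$ in terms of $u(x)$, i.e.\ you need $u(x) \geq c''\,\mathrm{inj}(x)^{-B}$ (small injectivity radius forces $u$ large), not $u(x) \leq c'\,\mathrm{inj}(x)^{-B}$ as stated. The latter bounds $u$ from above when $\mathrm{inj}$ is small, which is the opposite of what is needed and, combined with Step 2, produces two upper bounds on $\mathrm{inj}$ and no contradiction. (The exponent $1/(AB)$ should also be $A/B$, but that is cosmetic.) With the corrected direction, one gets $r \geq c''' u(x)^{-A/B}$ for some stratum constants, and then choosing $C_5 > A/B$ with room to absorb constants (using $u \geq 2$) gives the contradiction you want.
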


As a consequence, we have the following proposition.

\begin{prop}
\label{inj_radius_proposition}
    For any $t>0$ and any $\tilde{x} = (\tilde{X},\tilde{\omega}) \in \Omega_1\mathcal{T}_g(\sigma)$, the restricted projection map $\pi |_{B_{r}({a_t \tilde{x}})}$ is injective for $r < e^{-C_5\kappa_3 t}u(x)$.
\end{prop}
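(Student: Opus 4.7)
The plan is to derive this proposition as a direct corollary by applying the preceding lemma at the shifted point $a_t \tilde{x}$ rather than at $\tilde{x}$, and then converting the resulting bound on $u(a_t x)$ into a bound involving $u(x)$ using Theorem \ref{athreya_function}.

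First, I would invoke the preceding lemma (Lemma 2.6 of \cite{EMM_effective_simple}) at the point $a_t \tilde{x} \in \Omega_1\mathcal{T}_g(\sigma)$. This immediately yields that the restricted projection $\pi|_{B_r(a_t \tilde{x})}$ is injective for every $0 < r \leq u(a_t x)^{-C_5}$, where $a_t x := \pi(a_t \tilde{x})$. Thus the only thing left is to produce a lower bound on $u(a_t x)^{-C_5}$ that depends on the original basepoint $x$ and the flow time $t$.

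For this I would use the upper half of the inequality \eqref{athreya_inquality}, namely $u(a_t x) \leq e^{\kappa_3 t} u(x)$, which upon taking the $(-C_5)$-th power gives
\begin{displaymath}
u(a_t x)^{-C_5} \;\geq\; e^{-C_5 \kappa_3 t}\, u(x)^{-C_5}.
\end{displaymath}
Hence any $r$ satisfying the bound in the statement automatically falls within the regime $r \leq u(a_t x)^{-C_5}$ required by the preceding lemma, and the injectivity of $\pi|_{B_r(a_t \tilde{x})}$ follows. There is no real obstacle: the proposition is essentially a bookkeeping consequence, recording that the injectivity radius of the projection at a flowed-forward point degrades at most exponentially in $t$ with explicit rate $C_5 \kappa_3$, and it is stated in this form so that it can be invoked uniformly along orbits of the Teichm\"uller geodesic flow in later arguments.
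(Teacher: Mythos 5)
Your proposal is exactly the intended (implicit) argument: apply Lemma 2.6 of \cite{EMM_effective_simple} at the shifted point $a_t\tilde{x}$ and convert $u(a_t x)$ to $u(x)$ via the upper bound in \eqref{athreya_inquality}. One detail worth flagging: your chain of inequalities correctly produces the threshold $r < e^{-C_5\kappa_3 t}\,u(x)^{-C_5}$, whereas the proposition as printed says $r < e^{-C_5\kappa_3 t}\,u(x)$; since $u \geq 2$, the printed bound is strictly weaker than what the argument gives and appears to be a typographical slip for $u(x)^{-C_5}$. Your derivation is the correct one, but you assert that ``any $r$ satisfying the bound in the statement'' lands in the allowed regime without noting that the statement's exponent must be $-C_5$, so be explicit about that correction rather than letting it pass silently.
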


\subsection{Non-uniformly hyperbolic dynamics of the geodesic flow}
\label{non-uniform-dynamics}

Recall the Kontsevich-Zorich cocyle defined in Subsection $\ref{transversal_separation}$, $A: \mathrm{SL}(2,\mathbb{R}) \times \Omega_1\mathcal{M}_g(\sigma) \rightarrow \mathrm{Sp}(2g,\mathbb{Z}) \ltimes \mathbb{R}^h$. For $x = (X,\omega) \in \Omega_1\mathcal{M}_g(\sigma)$ and $v \in H^1(X,\mathbb{R}) $, its Lyapunov exponents $\lambda_i$ are  

\begin{displaymath}
\lambda_i = \lim\limits_{t \rightarrow \infty}\frac{1}{t}\mathrm{log}\frac{||A(a_t,x)v||}{||v||}.
\end{displaymath}
For $\mu_{MV}$ the Masur-Veech measure on a connected component $\mathcal{C}$ of a stratum $\Omega_1\mathcal{M}_g(\sigma)$, a foundational result of Forni \cite{forni2002deviation} gives 

\begin{theorem}
\label{Forni_spectral_gap}
The Lyapunov exponents of the Kontsevich-Zorich cocycle for $\mu_{MV}$ are all non-zero and distinct.
\end{theorem}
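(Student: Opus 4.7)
The plan is to prove this in two stages: first non-vanishing of all Lyapunov exponents (due to Forni), and then simplicity (distinctness), which is the substantially harder input (due to Avila-Viana). For non-vanishing, I would exploit the Hodge norm on the fibers of $H^1$ induced by the pairing $\langle \omega_1,\omega_2\rangle = \tfrac{i}{2}\int_X \omega_1\wedge\overline{\omega_2}$ from Definition \ref{Hodge_inner_product}. Parallel transport by the Gauss-Manin connection is in general not Hodge-isometric, so differentiating the log of the Hodge norm of a flat section $v(t) = A(a_t,x)v$ along the Teichmüller geodesic flow produces a formula involving the second fundamental form of the Hodge subbundle $H^{1,0}\subset H^1\otimes\mathbb{C}$. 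The key identity one aims at is the Kontsevich-Forni formula expressing this derivative pointwise in terms of a symmetric form whose operator norm I shall call $\Lambda(X,\omega)$.

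Once the Kontsevich-Forni formula is in hand, the proof of non-vanishing reduces to showing that $\Lambda < 1$ almost everywhere with respect to $\mu_{MV}$. I would establish this by a pointwise analysis: $\Lambda(X,\omega) = 1$ forces an algebraic degeneracy of the second fundamental form, and this degeneracy locus is a proper real-analytic subvariety in each stratum and hence $\mu_{MV}$-null (since $\mu_{MV}$ is in the Lebesgue class of period coordinates). Together with ergodicity of the Teichmüller geodesic flow on connected components of strata (Masur-Veech), the Oseledets theorem then upgrades the pointwise bound to strict positivity of the top and, applied inductively to exterior powers, to non-vanishing of every Lyapunov exponent.

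For distinctness, the approach I would use is the Avila-Viana simplicity criterion. One models the Teichmüller flow on a connected component as a suspension of Rauzy-Veech induction on a stratum of interval exchange transformations; the Kontsevich-Zorich cocycle pulls back to a locally constant integer-symplectic cocycle over this symbolic Markov system. The Avila-Viana criterion says that if the cocycle over the shift is both \emph{pinching} (there is some loop in the Rauzy diagram whose associated matrix has simple spectrum and no eigenvalue coincidences among the isotropic subspaces forced by the symplectic form) and \emph{twisting} (some other loop moves any fixed isotropic subspace off all the invariant subspaces of the pinching element), then the Lyapunov spectrum is simple. I would verify these two properties combinatorially by exhibiting explicit Rauzy loops, stratum by stratum; the existence of suitable loops is where the proof becomes genuinely difficult.

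The hardest step is clearly the twisting/pinching verification for all connected components of all strata: it is a delicate, non-formal combinatorial input to an otherwise abstract dynamical argument, and it is here that one must genuinely use the specific geometry of abelian differentials rather than general properties of symplectic cocycles. Non-vanishing, by contrast, is essentially a calculus computation leveraging the sign of curvature of the Hodge bundle, and is comparatively routine once the Kontsevich-Forni formula is set up.
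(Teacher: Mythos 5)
The paper does not prove this statement; it cites it as a foundational external result attributed to Forni \cite{forni2002deviation}. Your outline correctly reconstructs the two-part structure of the actual proof from the literature, and in doing so it is in fact more precise than the paper's own attribution: Forni's 2002 paper establishes only the non-vanishing (hyperbolicity) of the Kontsevich-Zorich cocycle via the variational formula for the Hodge norm and the second fundamental form of the Hodge bundle, exactly as you describe, whereas the distinctness (simplicity) of the Lyapunov spectrum is the Avila-Viana resolution of the Zorich-Kontsevich conjecture via the pinching-and-twisting criterion applied to the Rauzy-Veech symbolic model. Crediting the whole theorem to Forni alone, as the paper's citation suggests, slightly understates the Avila-Viana contribution. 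One small caveat on your non-vanishing sketch: the passage from ``$\Lambda < 1$ on the complement of a proper real-analytic subvariety'' to strict positivity of all exponents is not a single calculus step; Forni's argument requires a more delicate recursive use of the variational formula on exterior powers together with the $\mathrm{SL}(2,\mathbb{R})$-invariance and ergodicity of $\mu_{MV}$, because the degeneracy locus analysis alone does not immediately control the lower exponents. But as a high-level roadmap your proposal is faithful to how the theorem is actually proved.
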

Theorem $\ref{Forni_spectral_gap}$ implies non-uniform hyperbolicity for the geodesic flow for the Hodge norm, which we will now define. Recall the inner product from Definition \ref{Hodge_inner_product} given by $\langle{\omega_1,\omega_2\rangle}:= \frac{i}{2}\int_X \omega_1 \wedge \overline{\omega_2}$, and the forgetful map $p:H^1(X,Z(\omega),\mathbb{R}) \rightarrow H^1(X,\mathbb{R})$ from relative to absolute homology. Denote by $||\cdot||_H$ the norm induced by this inner product on $H^1(X,\mathbb{R})$ and use the same notation to define a norm on $H^1(X,Z(\omega),\mathbb{R})$ via

\begin{displaymath}
||v||_H = ||p(v)||_H + \sum\limits_{(p,p') \in Z(\omega) \times Z(\omega)} \bigg\lvert \int_{\gamma_{p,p'}} (v - \mathbf{h}) \bigg\rvert
\end{displaymath}
where $\mathbf{h}$ is the harmonic 1-form representing $p(v)$ and $\gamma_{p,p'}$ is any path connecting $p$ to $p'$. The norm is independent of the choice of path. We will denote the Hodge norm distance by $d_{H}(\cdot,\cdot)$.

\begin{corollary}
\label{nonuniform_hyperbolicity}
For $\mu_{MV}$-a.e. $x = (X,\omega) \in \Omega_1\mathcal{M}_g(\sigma)$ and the decomposition of the tangent bundle $T_{x}\Omega\mathcal{M}_g(\sigma) \cong \mathbf{v}(x) \oplus E^{u}(x) \oplus E^{s}(x)$, we have the following. There is an $a_t$-invariant function $\lambda:\Omega_1\mathcal{M}_g(\sigma) \rightarrow \mathbb{R}^{+}$ with $\lambda(x) < 1$, so that for every $e_0 > 0$, there is an $a_t$-invariant function $E:\Omega_1\mathcal{M}_g(\sigma) \rightarrow (0,\epsilon_0) $ and a function $C(x)>0$ on $\Omega_1\mathcal{M}_g(\sigma)$ where

\begin{displaymath}
\label{non-uniform_condition}
||D_{a_t}(x)v||_{H} \leq C(x)(\lambda(x))^{t}||v||_{H}
\end{displaymath}
for any $v \in E^{s}(x)$, and $D$ is the derivative. Further, $C(a_t x) \leq C(x)e^{E(x)t}$.

\end{corollary}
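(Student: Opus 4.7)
The plan is to deduce the statement from Oseledec's multiplicative ergodic theorem (MET) applied to the derivative cocycle $Da_t$ on $T\Omega_1\mathcal{M}_g(\sigma)$, measured by the Hodge norm, using Theorem \ref{Forni_spectral_gap} to guarantee that all Lyapunov exponents on the stable bundle $E^s$ are strictly negative, and then invoking the standard tempering construction from nonuniformly hyperbolic (Pesin) theory to produce the multiplicative constant $C(x)$ with subexponential growth along orbits.

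First, since $\mu_{MV}$ is $a_t$-ergodic (Masur, Veech), Oseledec's MET applied to $Da_t$ yields, for $\mu_{MV}$-a.e. $x$, a measurable $Da_t$-invariant splitting of the tangent space with well-defined Lyapunov exponents, constant almost everywhere. The decomposition $T\cong \mathbf{v}\oplus E^u\oplus E^s$ of Definition \ref{Definition2.7} is $Da_t$-invariant, and the Oseledec splitting further refines $E^s(x)$. In period coordinates, $E^s(x)$ sits in the imaginary direction $H^1(X,Z(\omega),i\mathbb{R})$, so $Da_t$ on $E^s(x)$ factors as the explicit scalar contraction from the $2\times 2$ action composed with the Kontsevich-Zorich cocycle on relative cohomology. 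Under the forgetful map $p:H^1_{rel}\to H^1$, the kernel carries only the scalar contraction (the KZ cocycle acts trivially there), while on the image the exponents are shifts of the KZ exponents on $H^1(X,\mathbb{R})$. By Theorem \ref{Forni_spectral_gap}, the KZ exponents are all nonzero, so the explicit scalar contraction is never cancelled by a neutral direction: every Lyapunov exponent on $E^s(x)$ is strictly negative.

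Let $\eta(x) > 0$ denote the smallest absolute value among the Lyapunov exponents on $E^s(x)$; by Oseledec, $\eta$ is $a_t$-invariant. Set $\lambda(x) = e^{-\eta(x)} < 1$. For any fixed $\epsilon_0 > 0$, Pesin's construction furnishes an $a_t$-invariant measurable function $E(x)\in(0,\epsilon_0)$ and a Lyapunov-adapted (slow) norm $\|\cdot\|'_x$ on $E^s(x)$ in which $Da_t$ contracts cleanly at rate $\lambda(x)^t$. The measurable distortion $C(x)\geq 1$ comparing this adapted norm to the Hodge norm yields the estimate of the corollary, and the tempering property $C(a_t x)\leq C(x)e^{E(x)t}$ is the standard fact that angles between Oseledec subspaces and their norms, measured against any fixed reference norm (here the Hodge norm), vary subexponentially along generic orbits.

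The only point where nontrivial geometric input beyond abstract ergodic theory is required is the nonvanishing of the KZ exponents on $H^1(X,\mathbb{R})$, supplied by Forni's Theorem \ref{Forni_spectral_gap}; without this, a KZ-neutral direction could exactly compensate the explicit scalar contraction and destroy hyperbolicity on $E^s$. Once nonvanishing is in hand, the remainder is a direct application of the Oseledec-Pesin machinery to a nonuniformly hyperbolic ergodic flow, and the main obstacle is bookkeeping: correctly tracking how the exponents of the derivative cocycle on $E^s$ decompose into the explicit scalar contribution from $a_t$ and the Lyapunov contribution from the KZ cocycle on $H^1_{rel}$, separating the $\ker p$ and $p(E^s)$ parts.
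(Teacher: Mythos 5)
The paper states this corollary without any proof, simply asserting that Theorem \ref{Forni_spectral_gap} (Forni's non-vanishing and simplicity of KZ exponents) implies non-uniform hyperbolicity for the geodesic flow in the Hodge norm. So there is no paper proof to compare against; your task was to supply the standard argument, and in outline you do: Oseledec's multiplicative ergodic theorem, identification of the exponents of $Da_t$ on $E^s$ in terms of the scalar contraction and the KZ exponents, Forni for non-degeneracy, and a Pesin tempering kernel to produce $C(x)$ and $E(x)$ with $C(a_t x) \leq C(x)e^{E(x)t}$. This is the correct and expected route.

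However, there is a genuine confusion in the step where you deduce negativity of the exponents on $E^s$ from Forni's theorem. You write that the KZ exponents being ``all nonzero'' means ``the explicit scalar contraction is never cancelled by a neutral direction.'' But the stable exponents are of the form $-1 + \lambda_i$ (in the $e^t$ normalization), so the danger is not a KZ exponent equal to $0$ --- that would give $-1 + 0 = -1 < 0$, which is fine --- but a KZ exponent equal to $+1$, which produces a zero total exponent. The only KZ exponent equal to $1$ is the top exponent $\lambda_1 = 1$ carried by the tautological plane $\mathrm{span}(\mathrm{Re}\,\omega, \mathrm{Im}\,\omega)$, and the corresponding directions in $E^s$ are the $\mathrm{SL}(2,\mathbb{R})$-orbit direction (stable horocycle) and the real-rescaling direction, the latter of which is transverse to the unit-area locus. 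What you actually need from Theorem \ref{Forni_spectral_gap} is the spectral gap $\lambda_2 < 1$, which comes from the \emph{distinctness} clause (since $\lambda_1 = 1$ is always present), not from the non-vanishing clause. Non-vanishing of $\lambda_g$ is neither necessary nor sufficient for hyperbolicity of the geodesic flow on $E^s$. You should also be careful with the phrase ``the KZ cocycle acts trivially'' on $\ker p$; it acts through a finite permutation group (permuting the zeros), so its Lyapunov exponents vanish there, but the action is not literally trivial. With these corrections --- invoke the simplicity of the top exponent, not non-vanishing, and observe that the zero exponent $-1 + \lambda_1 = 0$ lives in the directions excluded by restricting to the unit-area locus and removing the $\mathrm{SL}(2,\mathbb{R})$-orbit tangent --- the argument is sound and matches the intent of the paper's citation of Forni.
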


\subsection{Norm comparisons and closing lemma}
\label{Norm_comparisons_closing}

It will be helpful for our purposes to know that the Teichm{\"u}ller norm $||\cdot||_{\mathcal{T}}$ defined in subsection $\ref{Section2.1}$ is commensurable with the AGY-norm $||\cdot||$ defined in Definition $\ref{defn_AGYnorm}$ and the Hodge norm defined in Section \ref{non-uniform-dynamics}, on compact sets. We would like a quantitative comparison of these norms, with explicit dependence on the compact set. For this, we will use the main results in \cite{SU2024108839} due to Su and Zhang and the main results in \cite{Hodge_Teich} due to Kahn and Wright.\\

 For a quadratic differential $q$ on a Riemann surface $X$, there exists a canonical double cover $\hat{X}$ and an Abelian differential $\omega$ on $\hat{X}$ so that the pullback of $q$ to $\hat{X}$ is $\omega^2$. The set of regular points includes the even zeroes of $q$, but the cover has ramification at the odd zeroes and poles. An even zero of $q$ of order $n_i$ corresponds to two zeroes of $\omega$ of orders $n_i /2$, and an odd zero of $q$ of order $n_i$ corresponds to a zero of order $n_i + 1$. The Deck group of $\hat{X}$ is an involution $\tau$; a conformal automorphism of $\hat{X}$ such that $\tau^{*}(\omega) = -\omega$. For $g \geq 2$, we will denote by $\mathcal{Q}_1\mathcal{M}_g(1,...,1)$ the principal stratum of unit-area quadratic differentials; that is, the space of quadratic differentials $(X,q)$ with $4g-4$ simple zeroes. The tangent space of $\mathcal{Q}_1\mathcal{M}_g(1,...,1)$ at $(X,q)$ can be identified with $H^1_{-1}(\hat{X},Z(q),\mathbb{C})$, the $-1$ eigenspace for the involution action on $H^1(\hat{X},Z(q),\mathbb{C})$, where $Z(q)$ denotes the zeros of $q$. When $q$ has only odd zeroes, $H^1_{-1}(\hat{X},Z(q),\mathbb{C})$ can be identified with $H^1_{-1}(\hat{X},\mathbb{C})$. Observe that in this context, $\mathrm{inj}(X,q)$ is still well-defined, as is the norm in Definition \ref{defn_AGYnorm}. Let $\psi:\mathcal{Q}_1\mathcal{M}_g(1,...,1) \rightarrow \mathcal{M}_g$ be the natural map $(X,q) \mapsto X$, and denote by $D$ its derivative.

 \begin{theorem}[Kahn-Wright, \cite{Hodge_Teich}]
 \label{Hodge_Teich_norm_comparison}
 Let $(X,q) \in \mathcal{Q}_1\mathcal{M}_g(1,...,1)$, and $P:\hat{X}\rightarrow X$ such that $P^{*}q = \omega^2$. Then, for any $v \in H^1_{-1}(\hat{X},\mathbb{C})$, we have
\begin{equation}
 \label{norm_comparison_equation_H&T}
 ||v||_H \leq ||D\psi(v)||_{\mathcal{T}} \leq \frac{4}{\sqrt{\mathrm{inj}(X,q)}}||v||_H.
 \end{equation}

 \end{theorem}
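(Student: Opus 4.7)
My plan is to prove the two inequalities separately, first identifying the Beltrami differential $D\psi(v)$ explicitly in terms of the harmonic representative of $v$. Since $v$ lies in the $(-1)$-eigenspace of $\tau^{*}$, its harmonic representative $\omega_v$ also lies in the $(-1)$-eigenspace, and the Hodge norm $||v||_H$ agrees (up to a fixed constant comparison between the Hodge and flat $L^2$ norms on holomorphic pieces) with $||\omega_v||_{L^2(\hat X, |\omega|^2)}$. The quotient $\overline{\omega_v}/\omega$ is $\tau^{*}$-invariant and descends to a Beltrami differential $\mu_v$ on $X$ representing $D\psi(v)$. Lifting the Teichm\"uller pairing from $X$ to $\hat X$ via $P^{*}q' = \eta^2$ for a holomorphic $1$-form $\eta$ on $\hat X$, one obtains $\langle \mu_v, q'\rangle = \tfrac{1}{2}\int_{\hat X}\overline{(\omega_v/\omega)}\,\eta^2$, and in particular $||\eta||_2^2 = 2||q'||_1$ with respect to the flat measure $|\omega|^2$.

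For the lower bound $||v||_H \leq ||D\psi(v)||_{\mathcal{T}}$, I would evaluate the pairing against the explicit test quadratic differential $\tilde q := \omega_v \cdot \omega$, which is $\tau^{*}$-invariant on $\hat X$ and hence descends to $X$. A direct computation gives $\langle \mu_v, \tilde q\rangle$ proportional to $||\omega_v||_2^2$, while Cauchy--Schwarz yields $||\tilde q||_1 \lesssim ||\omega_v||_2 \cdot ||\omega||_2$, which is a constant multiple of $||\omega_v||_2$ by the unit-area normalization. Normalizing $\tilde q$ to have unit $L^1$ norm produces the Hodge-norm lower bound.

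For the upper bound $||D\psi(v)||_{\mathcal{T}} \leq \tfrac{4}{\sqrt{\mathrm{inj}(X,q)}}||v||_H$, the naive Cauchy--Schwarz on the lifted integral yields $|\langle \mu_v, q'\rangle| \leq ||\omega_v||_2 \cdot ||\eta^2/\omega||_2$, reducing matters to controlling the pointwise quotient $|\eta/\omega|$, which can blow up near zeros of $\omega$, equivalently near short saddle connections of $q$. For each $p \in \hat X$, the mean-value inequality for holomorphic functions on an embedded $\omega$-flat disk of radius $r$ gives $|(\eta/\omega)(p)|^2 \leq (\pi r^2)^{-1}\int_{B(p,r)}|\eta|^2$, and taking $r$ of order $\mathrm{inj}(X,q)$ combined with $||\eta||_2^2 \leq 2$ gives a first-pass bound. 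To obtain the sharp dependence $1/\sqrt{\mathrm{inj}(X,q)}$ stated in the theorem, one must further exploit the fact that $\eta$ itself cannot concentrate its $L^1$ mass on the thin part, effectively improving the mean-value $r$ to $\sqrt{r}$ in the relevant estimate.

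The main obstacle is precisely this upgrade from the naive $1/\mathrm{inj}(X,q)$ dependence produced by a direct holomorphic mean-value estimate to the sharp $1/\sqrt{\mathrm{inj}(X,q)}$ dependence of the theorem. This requires a careful $L^1 \to L^2$ interpolation that simultaneously respects the conformal structure on $\hat X$ and the flat structure $|\omega|^2$ near the thin part, and it is the technical heart of Kahn--Wright \cite{Hodge_Teich}. Once established, tracking absolute multiplicative constants through the Cauchy--Schwarz step and the mean-value inequality yields the explicit numeral $4$ in the statement.
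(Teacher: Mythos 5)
This theorem is quoted verbatim from Kahn--Wright \cite{Hodge_Teich}; the paper under review supplies no proof of it, so there is no internal argument to compare your sketch against. Evaluating your proposal on its own terms: the framework for the lower bound is the right one (identify $D\psi(v)$ with a Beltrami differential built from the harmonic representative, pair against an adapted quadratic differential, apply Cauchy--Schwarz and the unit-area normalization of $\omega$), and this is essentially the Forni-type computation. One technical slip: you propose $\tilde q := \omega_v\cdot\omega$ as a test quadratic differential, but $\omega_v$ is merely harmonic, so $\omega_v\cdot\omega$ is not a holomorphic quadratic differential in general; you should instead pair against $\alpha\cdot\omega$ where $\alpha$ is the $(1,0)$-part of $\omega_v$, and then argue separately (using the Hodge decomposition and anti-invariance under $\tau^{*}$) that the $(0,1)$-part is controlled.

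The genuine gap is in the upper bound, and you name it yourself: the naive mean-value inequality on $\omega$-flat disks of radius comparable to $\mathrm{inj}(X,q)$ gives the pointwise bound $|\eta/\omega| \lesssim \mathrm{inj}(X,q)^{-1}$, which after Cauchy--Schwarz yields $\|D\psi(v)\|_{\mathcal{T}} \lesssim \mathrm{inj}(X,q)^{-1}\|v\|_H$, not the asserted $\mathrm{inj}(X,q)^{-1/2}$. The improvement to the square-root exponent is the whole content of Kahn--Wright's theorem (and, as the present paper notes, they spend Subsection 3.3 of \cite{Hodge_Teich} establishing that $1/\sqrt{\mathrm{inj}}$ is sharp). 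Writing that this step ``is the technical heart of Kahn--Wright'' is not a proof of it; as written, your proposal only recovers a weaker estimate with exponent $-1$. To actually close the gap one needs the finer analysis near the thin part -- roughly, controlling the $L^1$ mass of quadratic differentials on annular neighborhoods of short curves/saddle connections so that the mean-value radius can be taken on the $\sqrt{\mathrm{inj}}$ scale rather than the $\mathrm{inj}$ scale -- and your sketch does not supply this. Absent that step, and absent the explicit constant-tracking needed to produce the numeral $4$, the proposal is an outline of the statement rather than a proof.
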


 \begin{theorem}[Su-Zhang,\cite{SU2024108839}]
 \label{theorem_norm_comparison}
 Let $(X,q) \in \mathcal{Q}_1\mathcal{M}_g(1,...,1)$, and $P:\hat{X}\rightarrow X$ such that $P^{*}q = \omega^2$. Then, for any $v \in H^1_{-1}(\hat{X},\mathbb{C})$, we have

 \begin{equation}
 \label{norm_comparison_equation}
 \frac{\mathrm{inj}({X},q)}{4\sqrt{2}}||v|| \leq ||D\psi(v)||_{\mathcal{T}} \leq \frac{16}{\sqrt{\mathrm{inj}(X,q)\pi}}||v||.
 \end{equation}
 \end{theorem}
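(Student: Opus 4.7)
The strategy I would pursue is to exploit the flat geometry of $(X,q)$ to relate the AGY supremum over saddle connections to the Teichm{\"u}ller norm, which is obtained by pairing Beltrami differentials with unit $L^1$ holomorphic quadratic differentials. For the upper bound, a saddle-connection triangulation converts the Teichm{\"u}ller pairing into a sum of local pairings directly controlled by $\|v\|$; for the lower bound, a holomorphic test quadratic differential concentrated near a single maximising saddle connection detects the corresponding period of $v$ and hence $\|v\|$.

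To establish the upper bound $\|D\psi(v)\|_{\mathcal{T}} \leq \frac{16}{\sqrt{\mathrm{inj}(X,q)\pi}}\|v\|$, I would fix a Delaunay-type triangulation $\mathcal{T}$ of $(\hat{X},|\omega|)$ whose edges are saddle connections, chosen to be equivariant under the involution $\tau$, and whose triangles have circumradius controlled from above in terms of $\mathrm{inj}(X,q)$. Given any holomorphic quadratic differential $q_0$ on $X$ with $\|q_0\|_1 = 1$, I would lift it to $\hat{X}$ and write the Teichm{\"u}ller pairing as a sum of triangle integrals. On each triangle $T \in \mathcal{T}$, the class $v$ is determined by its periods on the three bounding saddle connections $s_i$, which satisfy $|v(s_i)| \leq \|v\|\,|\mathrm{Hol}_\omega(s_i)|$ by definition of the AGY norm. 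The Euclidean bound $\mathrm{area}(T) \leq \pi\,(\mathrm{circumradius})^2$, combined with the Delaunay control of the circumradius in terms of the $|\mathrm{Hol}_\omega(s_i)|$ and of $\mathrm{inj}(X,q)$, produces the factor $1/\sqrt{\mathrm{inj}(X,q)\pi}$; summing over $T$ and taking the supremum over admissible $q_0$ yields the claimed inequality.

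For the lower bound $\frac{\mathrm{inj}(X,q)}{4\sqrt{2}}\|v\| \leq \|D\psi(v)\|_{\mathcal{T}}$, I would pick a saddle connection $s^{*}$ realising the AGY supremum to within a factor close to one, and build a holomorphic test quadratic differential $q_{s^{*}}$ on $X$ whose $L^1$ mass is concentrated in a flat parallelogram of width $|\mathrm{Hol}_\omega(s^{*})|$ and height comparable to $\mathrm{inj}(X,q)$ transverse to $s^{*}$; the injectivity hypothesis is precisely what ensures such a parallelogram embeds isometrically in $(X,|q|)$. Normalising to $\|q_{s^{*}}\|_1 = 1$ divides by an area factor comparable to $|\mathrm{Hol}_\omega(s^{*})| \cdot \mathrm{inj}(X,q)$, while the pairing $\langle \mu_v, q_{s^{*}}\rangle$ of the Beltrami representative of $v$ against $q_{s^{*}}$ is, up to a bounded constant, the normalised period $v(s^{*})/|\mathrm{Hol}_\omega(s^{*})|$. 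Combining these gives $|\langle \mu_v, q_{s^{*}}\rangle| \gtrsim \mathrm{inj}(X,q)\cdot \|v\|$ with explicit constant $1/(4\sqrt{2})$.

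The main obstacle is tracking the sharp numerical constants $16/\sqrt{\pi}$ and $1/(4\sqrt{2})$: this requires the optimal Euclidean geometry of Delaunay triangles and the sharp packing constants for embedded parallelograms around short saddle connections in the thin part, and it is in these optimisations that the injectivity radius enters with its precise power. A secondary but technically delicate issue is the bookkeeping required to pass between $X$ and its canonical double cover $\hat{X}$: since $v \in H^1_{-1}(\hat{X},\mathbb{C})$ must be anti-invariant under $\tau$, the triangulations and test quadratic differentials must be taken equivariantly (respectively anti-equivariantly) so that $D\psi$ correctly intertwines the two norms, and one must verify that $\mathrm{inj}(X,q)$ rather than $\mathrm{inj}(\hat{X},\omega)$ governs the final estimates.
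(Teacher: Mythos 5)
The paper does not prove this statement: it is quoted verbatim, with attribution, from Su--Zhang \cite{SU2024108839}, so there is no in-text argument to compare your proposal against. Evaluating the proposal on its own merits, the overall shape — Delaunay triangulations for the upper bound, concentrated test quadratic differentials for the lower bound — is a plausible skeleton, but two central links are missing and a third premise looks off.

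First, the proposal never makes explicit what $D\psi(v)$ actually is as a Beltrami differential, and this is where the theorem lives. The AGY norm is a supremum of normalised periods of the cohomology class $v$; the Teichm\"uller norm is the dual-$L^1$ norm of a (harmonic) Beltrami differential on $X$. To estimate one by the other you must write down, in period coordinates, the Ahlfors--Bers formula expressing the pairing $\langle D\psi(v),q_0\rangle$ as an integral over $X$ involving $v$ and $q_0/q$ (or equivalently the deformation of the flat structure). Without this bridge both of your estimates are estimates of the wrong object; the triangle-by-triangle decomposition of ``the Teichm\"uller pairing'' has no meaning until this formula is in hand, and the constants $16/\sqrt{\pi}$ and $1/(4\sqrt{2})$ depend on it. Second, the lower bound as stated asks for a \emph{holomorphic} quadratic differential whose $L^1$ mass is concentrated in a single flat parallelogram; holomorphic objects do not localise, and the usual workaround (Poincar\'e series, or pairing against $q$ itself, or pairing against an explicit basis of $Q(X)$ adapted to the flat structure) is not indicated, so the claimed explicit constant cannot be recovered from what you write. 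Third, the assertion that Delaunay circumradii on a unit-area flat surface are controlled \emph{by} $\mathrm{inj}(X,q)$ is not right: by Masur--Smillie the circumradius is bounded by an absolute constant depending only on the area, independent of the injectivity radius, and in the thin part Delaunay triangles can remain large even as $\mathrm{inj}\to 0$. The $\mathrm{inj}^{-1/2}$ in the upper bound does not come from circumradius growth; it enters through the $|q_0|/|q|$ weight in the pairing, which can concentrate near a short saddle connection, and that mechanism is not captured by your triangle-area bookkeeping.
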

 
We remark that every $(X,\omega) \in \Omega_1\mathcal{M}_2(2)$ is the holonomy double cover of a unique surface in the principal stratum (with possibly simple poles) $\mathcal{Q}_1\mathcal{M}_g(1,-1,-1,-1,-1,-1)$, and Theorems $\ref{Hodge_Teich_norm_comparison}$ and $\ref{theorem_norm_comparison}$ apply in this case. See Subsection 3.3 in \cite{Hodge_Teich} for an application of this fact to the sharpness of the bounds in Theorem \ref{Hodge_Teich_norm_comparison}. Denote the Teichm{\"u}ller distance by $d_{\mathcal{T}}(\cdot,\cdot)$. With this notion of distance, we require a closing lemma for the hyperbolic action of $a_t$ on a stratum. This statement weakly generalizes the usual Anosov closing lemma for the geodesic flow $a_t$ on the unit tangent bundle of a hyperbolic surface $T^{1}M = \mathrm{PSL}(2,\mathbb{R})/\Gamma$. Roughly, the Anosov closing lemma in this context states that if for $x \in T^{1}M$ and $t>0$, $d(x,a_t(x))< \epsilon$, then there exists an $x_0 \in T^{1}M$ lying on a periodic orbit, such that $d(x,x_0)\ll \epsilon$. Since the geodesic flow on a stratum isn't uniformly hyperbolic, but only uniformly hyperbolic in compact sets in the sense of Corollary \ref{nonuniform_hyperbolicity}, this requires a slight modification. This following is a consequence of the arguments due to Eskin-Mirzakhani-Rafi \cite{eskin2019counting}, Lemma 8.1. and to Hamenst{\"a}dt \cite{Bowen_T}

\begin{theorem}[\cite{eskin2019counting},Lemma 8.1.,\cite{Bowen_T}]
\label{Anosov_closing_lemma}
Let $\mathcal{K} \subset \Omega_1\mathcal{M}_g(\sigma)$ be a compact set. Given $(X,\omega) \in \mathcal{K}$ and $\delta > 0$, there exists $t_0 > 0$ so that the following holds. Suppose that $a_t(X,\omega)$ stays in $\mathcal{K}$ for all $t \geq 0$ and $d_{H}((X,\omega),a_t(X,\omega))< \delta$ for $t > t_0$. Then, there exists nearby $(X',\omega')$ lying on a closed Teichm{\"u}ller geodesic of length within \ $\delta$ of $t$, and $d_{\mathcal{T}}((X,\omega),(X',\omega')) < \delta$.
\end{theorem}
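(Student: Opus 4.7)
The plan is to exploit the uniform hyperbolicity afforded by the assumption that the forward orbit remains in $\mathcal{K}$, and run a standard Anosov closing argument in a Poincar{\'e} cross-section. First, since $a_s(X,\omega) \in \mathcal{K}$ for all $s \geq 0$, Corollary \ref{nonuniform_hyperbolicity} supplies constants $\lambda_0 < 1$ and $C_0 > 0$, depending only on $\mathcal{K}$, such that
\begin{displaymath}
||D a_s(x) v||_H \leq C_0 \lambda_0^s ||v||_H \quad \text{for all } v \in E^s(x), \; x \in \mathcal{K}, \; s \geq 0,
\end{displaymath}
with the symmetric expansion estimate on $E^u$. I would then use the exponential map of Lemma \ref{exponential_map}, together with the norm comparisons in Theorems \ref{Hodge_Teich_norm_comparison} and \ref{theorem_norm_comparison}, to identify a Hodge-ball of radius $\delta$ around $(X,\omega)$ with a neighborhood in the local splitting $E^s(X,\omega) \oplus E^u(X,\omega) \oplus \mathbf{v}(X,\omega)$; this is legitimate because $\mathrm{inj}(\cdot)$ is bounded below on $\mathcal{K}$, making all three metrics boundedly equivalent there.

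Next, I would choose a small transversal disk $\Sigma$ to the flow at $(X,\omega)$, modeled on $E^s(X,\omega) \oplus E^u(X,\omega)$, and consider the first-return map $P : \Sigma \to \Sigma$ obtained by flowing for a time $\tau$ close to $t$. Because $d_H((X,\omega), a_t(X,\omega)) < \delta$, the displacement $c := P(X,\omega) - (X,\omega)$ inside $\Sigma$ has norm $O(\delta)$. Choosing $t_0$ so that $C_0 \lambda_0^{t_0}$ is much less than $1$, the derivative $DP_{(X,\omega)}$ is uniformly hyperbolic on $\Sigma$, contracting $E^s$ by a factor $\leq C_0 \lambda_0^t$ and expanding $E^u$ by the reciprocal; after restricting to a ball of radius $O(\delta)$, the nonlinear remainder of $P$ acquires arbitrarily small Lipschitz constant.

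Then I would solve $P(y) = y$ by the standard hyperbolic fixed-point argument. Writing $P(y) = L y + N(y) + c$ with $L = DP_{(X,\omega)}$, I would split $y = (y_s, y_u)$: on $E^s$ the equation $y_s = L_s y_s + N_s(y) + c_s$ is a Banach contraction because $L_s$ contracts by $C_0 \lambda_0^t$; on $E^u$ I rearrange to $y_u = L_u^{-1}(y_u - N_u(y) - c_u)$, which is also a contraction because $L_u^{-1}$ contracts $E^u$ by the same factor. Running the two jointly gives a unique fixed point $(X',\omega') \in \Sigma$, which is necessarily periodic with period $\tau$; the flow-direction component of $c$ forces $|\tau - t| < \delta$, and the norm comparisons translate the $\delta$-closeness in $\Sigma$ into $d_{\mathcal{T}}((X,\omega), (X',\omega')) < \delta$ (after possibly enlarging $t_0$ to absorb the comparison constants).

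The main obstacle is not the fixed-point argument itself but the careful bookkeeping required by non-uniformity: one must guarantee that the exponential map, the local product structure, and the Hodge-vs-AGY-vs-Teichm{\"u}ller comparisons all remain valid on balls of radius $\delta$ with constants depending only on $\mathcal{K}$, and, more subtly, that the constructed periodic orbit itself stays in a slightly enlarged compact set so that the hyperbolicity estimates persist along it. The first point is handled directly by Lemma \ref{exponential_map} and Theorems \ref{Hodge_Teich_norm_comparison}--\ref{theorem_norm_comparison}; the second is the content of the shadowing portion of Hamenst{\"a}dt's and Eskin--Mirzakhani--Rafi's treatments, and is where the hypothesis that the \emph{entire} forward orbit lies in $\mathcal{K}$ (rather than merely its endpoints) is essential.
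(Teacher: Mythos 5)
Your proposal takes essentially the same approach as the paper: both convert the Oseledets-type non-uniform hyperbolicity of Corollary \ref{nonuniform_hyperbolicity} into a uniform contraction rate valid once $t$ exceeds a threshold controlled by the injectivity radius on $\mathcal{K}$ (the paper's condition $C(x)e^{\epsilon_x t}<1$ is your ``$C_0\lambda_0^{t_0}$ much less than $1$''), then run the standard contraction-mapping Anosov closing argument, deferring the technical shadowing step to the cited treatments of Eskin--Mirzakhani--Rafi and Hamenst{\"a}dt. Your Poincar{\'e}-section fixed-point outline is a correct expansion of what the paper leaves as a brief sketch after the theorem statement.
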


 The proof relies on following the argument of the usual Anosov closing lemma for uniformly hyperbolic flows, which itself relies on the contraction mapping principle. Note that by Corollary \ref{nonuniform_hyperbolicity}, if one chooses $x \in \mathcal{K} = \Omega_1\mathcal{M}_{\epsilon}$, $\epsilon_0$ small enough,  and $\lambda(x) = \lambda(a_t x) = e^{\kappa t}$, we may set $\epsilon_0 + \kappa = \epsilon_x + \kappa'$ where $\epsilon_x$ and $\kappa'$ are both negative. Using the fact that $C(a_t x) \leq C(x)e^{\epsilon_0 t}$ we have 

\begin{displaymath}
||D_{a_t}(x)v||_H \leq e^{\kappa't}||v||_H
\end{displaymath}
if $C(x)e^{\epsilon_x t} < 1$, i.e., $t > \mathrm{log}(1/C(x))\frac{1}{\epsilon_x}$, which is hyperbolicity with hyperbolic factor $e^{\kappa't}$. Observe that $C(x)$ and the size of an $\epsilon_x$ one can take depend only on the injectivity radius of $x$. We also remark that Theorem \ref{Anosov_closing_lemma} is stated for the geodesic flow $a_t$, but is valid for any one-parameter hyperbolic action. The following theorem of McMullen is crucial to our method, and is true only in stratum $\Omega_1\mathcal{M}_2(2)$. 

\begin{theorem}[\cite{McMullen_hyperbolic}, Theorem 5.8]
\label{Veech_hyperbolic}
Let $(X,\omega) \in \Omega_1\mathcal{M}_2(2)$ and suppose $\gamma \in \mathrm{SL}(X,\omega)$ is a hyperbolic element of \ $\mathrm{SL}(2,\mathbb{R})$. Then, $(X,\omega)$ lies on a Teichm{\"u}ller curve.
\end{theorem}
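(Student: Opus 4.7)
The plan is to use the fact that in genus $2$ the existence of a pseudo-Anosov in $\mathrm{Aff}^{+}(X,\omega)$ forces $\mathrm{Jac}(X)$ to admit real multiplication by a real quadratic order with $\omega$ as an eigenform, and then to invoke McMullen's classification of such eigenform loci as Weierstrass curves, which project to Teichm{\"u}ller curves in $\mathcal{M}_{2}$.

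First, lift the hyperbolic $\gamma \in \mathrm{SL}(X,\omega)$ to an affine pseudo-Anosov $\phi \in \mathrm{Aff}^{+}(X,\omega)$ with derivative $D\phi = \gamma$. Then $\phi$ induces an integer symplectic automorphism $\phi^{*}$ of $H^{1}(X,\mathbb{Z})$. Using the decomposition in Proposition~\ref{prop2.2}, the plane $\mathcal{H}(x) = \mathbb{R}\mathrm{Re}(\omega) \oplus \mathbb{R}\mathrm{Im}(\omega) \subset H^{1}(X,\mathbb{R})$ is $\phi^{*}$-invariant, and $\phi^{*}$ acts on it with real eigenvalues $\lambda, \lambda^{-1}$ where $\lambda$ is the dilatation of $\phi$. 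Since $\dim_{\mathbb{R}} H^{1}(X,\mathbb{R}) = 4$ in genus $2$ and $\phi^{*}$ preserves the cup product pairing, the symplectic orthogonal $\mathcal{H}(x)^{\perp}$ is also a two-dimensional $\phi^{*}$-invariant subspace, on which $\phi^{*}$ has eigenvalues $\mu, \mu^{-1}$ (real reciprocals or a conjugate pair on the unit circle).

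Next I would form the self-adjoint integer endomorphism $T := \phi^{*} + (\phi^{*})^{-1}$ of $H^{1}(X,\mathbb{Z})$. By construction, $T$ acts as $(\lambda + \lambda^{-1})\,\mathrm{Id}$ on $\mathcal{H}(x)$ and as $(\mu + \mu^{-1})\,\mathrm{Id}$ on $\mathcal{H}(x)^{\perp}$ (the latter holds even in the parabolic subcase since $J + J^{-1} = 2\,\mathrm{Id}$ for a unipotent Jordan block $J$). In the generic case $\lambda + \lambda^{-1} \neq \mu + \mu^{-1}$, the subring $\mathbb{Z}[T] \subset \mathrm{End}(H^{1}(X,\mathbb{Z}))$ is an order in the real quadratic field $\mathbb{Q}(\lambda + \lambda^{-1})$, and since $T$ is self-adjoint for the Poincar{\'e} pairing it descends to real multiplication by an order $\mathcal{O}_{D}$ on $\mathrm{Jac}(X)$. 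The form $\omega$ is an eigenform because $\omega = \mathrm{Re}(\omega) + i\,\mathrm{Im}(\omega) \in \mathcal{H}(x) \otimes_{\mathbb{R}} \mathbb{C}$, which is the $(\lambda + \lambda^{-1})$-eigenspace of $T$ on $H^{1}(X,\mathbb{C})$.

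Finally I would apply McMullen's theorem on eigenform loci in genus $2$: the locus of $(X',\omega') \in \Omega_{1}\mathcal{M}_{2}(2)$ for which $\mathrm{Jac}(X')$ has real multiplication by some $\mathcal{O}_{D}$ with $\omega'$ an eigenform is a closed $\mathrm{GL}^{+}(2,\mathbb{R})$-invariant subvariety whose image in $\mathcal{M}_{2}$ is a finite union of Teichm{\"u}ller curves (the Weierstrass curves $W_{D}$). Applying this to the $(X,\omega)$ produced above yields the conclusion. The main obstacle I would expect is handling the degenerate case $\lambda + \lambda^{-1} = \mu + \mu^{-1}$, in which $T$ collapses to a scalar and gives no new endomorphism. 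Resolving it requires using the specific structure of $\Omega_{1}\mathcal{M}_{2}(2)$: either ruling out the coincidence by an arithmetic constraint on the characteristic polynomial of the integer symplectic matrix $\phi^{*}$, or replacing $\phi$ by a suitable element of the centralizer of $\gamma$ in $\mathrm{Aff}^{+}(X,\omega)$ that splits the eigenvalues; this is exactly where the restriction to the single-double-zero stratum in genus $2$ is essential, as the existence of non-Veech pseudo-Anosovs in higher strata (see Hubert--Schmidt, McMullen) precludes a naive generalization.
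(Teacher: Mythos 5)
The paper imports this statement verbatim as Theorem~5.8 of McMullen's work and offers no proof of its own, so there is no internal argument to compare yours against. What you have written is an outline of McMullen's actual strategy (real multiplication via the self-adjoint endomorphism $T = \phi^{*} + (\phi^{*})^{-1}$, with $\omega$ an eigenform, then the classification of eigenform loci in $\Omega\mathcal{M}_2(2)$ as Weierstrass curves), and the generic case is set up correctly: $T$ preserves $H^{1,0}(X)$ because it acts as a scalar on each of $\mathcal{H}(x)\otimes\mathbb{C}$ and $\mathcal{H}(x)^{\perp}\otimes\mathbb{C}$, so $T$ descends to $\mathrm{End}(\mathrm{Jac}\,X)$.

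Two issues remain. First, a small imprecision: when $\lambda+\lambda^{-1}\in\mathbb{Z}$ but $\lambda+\lambda^{-1}\neq\mu+\mu^{-1}$, the ring $\mathbb{Z}[T]$ is an order in $\mathbb{Q}\times\mathbb{Q}$, not in a real quadratic field. This is still covered by McMullen's formalism (discriminants $D$ that are perfect squares, corresponding to square-tiled/arithmetic surfaces), but your phrase ``order in the real quadratic field $\mathbb{Q}(\lambda+\lambda^{-1})$'' does not apply here. Second, and more seriously, the degenerate case $\lambda+\lambda^{-1}=\mu+\mu^{-1}$ is a genuine unresolved gap. In that case the characteristic polynomial of $\phi^{*}$ is $(x^2-nx+1)^2$ for an integer $n\geq 3$, $T=n\cdot\mathrm{Id}$ produces no new endomorphism, and your two suggested repairs are both too vague to carry the weight. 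There is no obvious arithmetic obstruction ruling this out: integer symplectic matrices with characteristic polynomial $(x^2-nx+1)^2$ exist, and one can construct square-tiled surfaces in $\Omega_1\mathcal{M}_2(2)$ whose Veech group contains hyperbolic elements lifting to exactly such $\phi^{*}$. ``Replacing $\phi$ by a centralizer element'' would need a proof that such an element exists and has split spectrum, which is not supplied. Note also that in the degenerate situation the conclusion is still expected to hold (these surfaces are square-tiled, hence Veech), so what is actually needed is a separate argument for this case rather than an argument ruling it out. As written, the proposal does not close the theorem.
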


\subsection{Finding elements of the Veech group}
\label{finding_elements}

Henceforth, for $v \in H^1(X,Z(\omega),\mathbb{C})$, we will suppress the reference to the tangent space point, and simply write $||v||$. Recall that we have a decomposition

\begin{displaymath}
T_{\mathbb{C}}(\Omega_1\mathcal{M}_g(2)) = T_{\mathbb{C}}^{st}(\Omega_1\mathcal{M}_g(2)) \oplus T_{\mathbb{C}}^{bal}(\Omega_1\mathcal{M}_g(2)).
\end{displaymath}
 We also remind the reader that for $e^{-0.01t} < \beta < 1$ and every $t \geq 0$, we consider the "smeared" geodesic push

\begin{displaymath}
E_{t} = B_{\beta} \cdot a_{t} \cdot \left\{ u_s: s \in [0,1] \right\} \subset \mathrm{SL}(2,\mathbb{R})
\end{displaymath}
where $B_{\beta}:= \left\{u_s^{\intercal}: |s| \leq \beta \right\} \cdot \left\{a_t: |t| \leq \beta \right\}$ and $u_s^{\intercal}$ is the transpose of $u_s$. For any such $E_t$ as above, and any $(X,\omega) \in \Omega_1\mathcal{M}_g(2)$, the map $E_t \rightarrow \Omega_1\mathcal{M}_g(2)$ given by $h \mapsto h\cdot (X,\omega)$ is well-defined, since any $h \in E_t$ is in particular an element of \ $\mathrm{SL}(2,\mathbb{R})$. It is is injective when, for any lift of $(X,\omega)$, $(\tilde{X},\tilde{\omega})$, the restricted projection map
$\pi|_{B_r(\tilde{X},\tilde{\omega}) \cap E_t \cdot (\tilde{X},\tilde{\omega}) } : \Omega_1\mathcal{T}_g(2) \rightarrow \Omega_1\mathcal{M}_g(2)$ is injective for any $r >0$. To make precise the definition of a "small dimension transverse direction", we need a notion of transverse energy and a Margulis function. Let $t \geq 0$, and $(X,\omega) \in \Omega_1\mathcal{M}_g(2)$, and assume that the map  $E_t \rightarrow \Omega_1\mathcal{M}_g(2)$ described above is injective. For every $z \in E_{t} \cdot (X,\omega)$, set

\begin{displaymath}
I_{bal,t}(z):= \left\{w \in T_{z}^{bal}: 0 < ||w|| < r(z) 
 \ \textrm{and} \ \mathrm{exp}_z(w) \in E_t\cdot (X,\omega) \right\}
\end{displaymath}
where $T_{z}^{bal}$ is the balanced tangent space at $z$, $\mathrm{exp}$ is the exponential map in Lemma $\ref{exponential_map}$ and $0 < r(z) < 1/C_1$ is defined in the following way. Suppose $\tilde{r}(z) > 0$ is the maximum $r$ so that the restriction of the projection map from the Teichm{\"u}ller space to moduli space $\pi|_{B_{r}(\tilde{X},\tilde{\omega}) } : \Omega_1\mathcal{T}_g(2) \rightarrow \Omega_1\mathcal{M}_g(2)$ is injective. Let $0< \theta < 0.01$ and $B^{\mathrm{SL}(2,\mathbb{R})}_{\theta} := \left\{u_s^{\intercal}: |s| \leq \theta \right\} \cdot \left\{a_t: |t| \leq \theta \right\} \cdot \left\{u_s: |s| \leq \theta \right\}$. Assume $\theta(z)$ is the supremum over all $\theta$ so that the following holds; if $z' = h \cdot \mathrm{exp}_z(w)$, and $h \in B^{\mathrm{SL}(2,\mathbb{R})}_{\theta}$ is a non-identity element, there exists no $w' \in T_{z}^{bal}$ such that $\mathrm{exp}_z(w') = z'$. Define 

\begin{displaymath}
r(z):= \textrm{min} \left\{\theta(z), \tilde{r}(z), \mathrm{inj}(z) \right\}
\end{displaymath}
where $\mathrm{inj}(z)$ is defined as in Section \ref{Section3}. Note that for a fixed $z$, $I_{bal}(z)$ is a finite subset of $T_{\omega}^{bal}$, by definition and because $E_t$ is bounded. Let $0 < \nu < 1 $, and define the following Margulis function $f_{t}: E_{t}\cdot (X,\omega) \rightarrow (0,\infty)$ by 

\begin{equation}
f_{t}(z) =
\begin{cases}

\sum_{0 \neq w \in I_{bal}(z)} ||w||^{-\nu} & \text{if} \ I_{bal}(z) \neq 0 \\
{r(z)}^{-\nu} & otherwise. 

\end{cases}
\end{equation}
We would like a quantitative bound on the number of elements of $I_{bal,t}(z)$ in terms of $t$. It is more delicate to obtain such a bound in our setting, due to the inhomogeneous nature of moduli space. We require the following Lemma due to Avila-Gou{\"e}zel-Yoccoz.

\begin{lemma}[Lemma 5.2, \cite{AvilaExp}]
\label{AGY_pushforward}
Let $x=(X,\omega) \in \Omega_1\mathcal{M}_g(\sigma)$. For $v \in H^1(X,Z(\omega),\mathbb{C})$ a tangent vector, $t \geq 0$, and $s \in [0,1]$, we have the following inequalities

\begin{equation}
\label{AGY_inequality}
e^{-2-2t}||v|| \leq ||(a_tu_s)_{*}v|| \leq e^{2+2t}||v||
\end{equation}
where $(a_tu_s)_{*}v$ is the pushforward of the vector $v$ under the $a_tu_s$ action.
\end{lemma}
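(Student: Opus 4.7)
The plan is to unwind the definition of the AGY norm from Definition~\ref{defn_AGYnorm} and reduce the claim to an elementary matrix norm estimate on $\mathbb{R}^2 \cong \mathbb{C}$. Write $g = a_t u_s$ and $g \cdot x = (X',\omega')$, so that $\omega' = g \cdot \omega$. The key geometric observation is that the $\mathrm{SL}(2,\mathbb{R})$-action is affine in period coordinates, so there is a natural bijection between the saddle connections $\mathcal{S}$ of $\omega$ and those of $\omega'$: a saddle connection $\mathbf{s}$ with $\mathrm{Hol}_\omega(\mathbf{s}) = q \in \mathbb{C}$ corresponds to a saddle connection of $\omega'$ with holonomy $g \cdot q$, where $g$ acts on $q$ as a vector in $\mathbb{R}^2$. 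Under the identification of tangent spaces with relative cohomology, the pushforward satisfies $(g_* v)(\mathbf{s}) = g \cdot v(\mathbf{s})$ for the same reason: the $\mathrm{SL}(2,\mathbb{R})$-action is postcomposition of charts by $g$, which applies the same $\mathbb{R}$-linear operator to every relative cohomology class.

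With this in hand, the AGY norm at $g \cdot x$ becomes
\begin{displaymath}
\lvert\lvert g_* v \rvert\rvert_{gx} = \sup_{\mathbf{s} \in \mathcal{S}} \frac{|g \cdot v(\mathbf{s})|}{|g \cdot \mathrm{Hol}_\omega(\mathbf{s})|}.
\end{displaymath}
Invoking the elementary matrix norm facts from Subsection~\ref{subsection_matrix_norm}, for any $g \in \mathrm{SL}(2,\mathbb{R})$ and vectors $p, q \in \mathbb{R}^2$ one has $|g \cdot p| \leq \lvert\lvert g \rvert\rvert \cdot |p|$ and $|g \cdot q| \geq \sigma_{\min}(g) \cdot |q|$. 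Using $\sigma_{\max}(g)\sigma_{\min}(g) = 1$ together with $\lvert\lvert g \rvert\rvert = \lvert\lvert g^{-1} \rvert\rvert$ (fact 1 of Subsection~\ref{subsection_matrix_norm}), it follows that $|g \cdot p|/|g \cdot q| \leq \lvert\lvert g \rvert\rvert^2 \cdot |p|/|q|$, and taking the supremum over saddle connections yields $\lvert\lvert g_* v \rvert\rvert_{gx} \leq \lvert\lvert g \rvert\rvert^2 \lvert\lvert v \rvert\rvert_x$.

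It remains to estimate $\lvert\lvert a_t u_s \rvert\rvert^2$ for $t \geq 0$ and $s \in [0,1]$. Directly computing the entries of $a_tu_s$ (or combining $\lvert\lvert a_tu_s \rvert\rvert \leq \lvert\lvert a_t \rvert\rvert \cdot \lvert\lvert u_s \rvert\rvert$ with the fact that $\lvert\lvert u_s \rvert\rvert$ is bounded by a universal constant for $s \in [0,1]$) produces a bound of the form $\lvert\lvert a_t u_s \rvert\rvert \leq e^{1+t}$, hence $\lvert\lvert a_tu_s \rvert\rvert^2 \leq e^{2+2t}$ as required. The lower bound is then obtained by symmetry: applying the already-proven upper bound to $g^{-1} = u_{-s}a_{-t}$ and to the vector $g_* v$ at the point $gx$, and using the identity $(g^{-1})_*(g_* v) = v$ together with the identical operator norm bound for $g^{-1}$, delivers $e^{-(2+2t)}\lvert\lvert v\rvert\rvert \leq \lvert\lvert g_* v\rvert\rvert$.

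The only step that is not purely mechanical is the compatibility of the pushforward of relative cohomology classes with the $\mathrm{SL}(2,\mathbb{R})$-action on holonomy vectors; this is what lets us pull $g$ through both numerator and denominator in the supremum. Once this naturality is granted, the proof reduces to the matrix norm computation sketched above, which is genuinely elementary and makes no use of the stratum structure beyond the definition of the AGY norm.
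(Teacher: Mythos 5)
The paper does not prove this lemma; it is quoted verbatim from Avila--Gou\"ezel--Yoccoz (Lemma 5.2 of \cite{AvilaExp}), so there is no internal argument to compare against. That said, your reconstruction is correct and follows the argument one would expect. The three ingredients are all sound: (i) linearity of the $\mathrm{SL}(2,\mathbb{R})$-action in period coordinates gives the naturality $\mathrm{Hol}_{g\omega}(\mathbf{s}) = g\cdot\mathrm{Hol}_\omega(\mathbf{s})$ and $(g_*v)(\mathbf{s}) = g\cdot v(\mathbf{s})$, so that $\lvert\lvert g_*v\rvert\rvert_{gx}$ is a supremum over the \emph{same} index set of saddle connections with $g$ applied to numerator and denominator; (ii) since $\det g = 1$, the ratio $|g\cdot p|/|g\cdot q|$ is pinched between $\sigma_{\min}(g)^2$ and $\sigma_{\max}(g)^2$, and your bound $\lvert\lvert g\rvert\rvert^2$ dominates $\sigma_{\max}(g)^2$; (iii) for $g = a_tu_s$ with $t\geq 0$, $s\in[0,1]$, one has $\sigma_{\max}(g)^2 \leq \lvert\lvert g\rvert\rvert_F^2 = e^{2t}(1+s^2)+e^{-2t} \leq 3e^{2t} < e^{2+2t}$, and the lower bound follows from applying the upper bound to $g^{-1}$ together with $\lvert\lvert g^{-1}\rvert\rvert = \lvert\lvert g\rvert\rvert$. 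The only genuinely geometric step is point (i), which you correctly flag as resting on the affine structure of period coordinates (Proposition \ref{prop2.2} and the discussion in Subsection \ref{transversal_separation}); everything after that is elementary linear algebra. A minor stylistic refinement is that the invocation of $\lvert\lvert g\rvert\rvert = \lvert\lvert g^{-1}\rvert\rvert$ in your step (ii) is a slight detour --- $1/\sigma_{\min}(g) = \sigma_{\max}(g) \leq \lvert\lvert g\rvert\rvert$ suffices directly --- but the conclusion is unaffected.
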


\begin{lemma}
\label{sheet_estimate}
Let $z \in E_t \cdot (X,\omega) \in \Omega_1\mathcal{M}_2(2)$, and assume $(X,\omega) \in \Omega_1\mathcal{M}_c$. That is, that $\mathrm{inj}(X,\omega) \geq c$. Then, $\#I_{bal,t}(z) \ll e^{6\kappa_4 t}$ for some $\kappa_4$ depending only on the stratum.
\end{lemma}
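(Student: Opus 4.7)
I will count the elements of $I_{bal,t}(z)$ by lifting to the Teichm\"uller cover and organizing the count by mapping-class-group fibers. By the assumed injectivity of $E_t \to \Omega_1\mathcal{M}_2(2)$, each $w \in I_{bal,t}(z)$ corresponds bijectively to an $h \in E_t$ with $hy = \exp_z(w)$, where $y = (X,\omega)$, so it suffices to bound the cardinality of
\[
\mathcal{I} := \{h \in E_t : hy \in \exp_z(B^{bal}_{r(z)})\}.
\]
Fix lifts $\tilde{y}, \tilde{z}$ of $y, z$ in $\Omega_1\mathcal{T}_2(2)$. Since $r(z) \leq \tilde{r}(z)$, the ball $\exp_{\tilde{z}}(B^{bal}_{r(z)})$ projects injectively into moduli; hence each $h \in \mathcal{I}$ determines a unique $\gamma_h \in \mathrm{Mod}(S_2)$ with $\gamma_h h \tilde{y} \in \exp_{\tilde{z}}(B^{bal}_{r(z)})$, using that the $\mathrm{SL}(2,\mathbb{R})$-action and the $\mathrm{Mod}(S_2)$-action on $\Omega_1\mathcal{T}_2(2)$ commute.

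Write $|\mathcal{I}| = \sum_\gamma |\mathcal{I}_\gamma|$ with $\mathcal{I}_\gamma = \{h \in \mathcal{I} : \gamma_h = \gamma\}$. For the fiber bound, if $h_1 \neq h_2$ both lie in $\mathcal{I}_\gamma$, then $\gamma h_1 \tilde{y}$ and $\gamma h_2 \tilde{y}$ are distinct balanced-direction points at $\tilde{z}$ related by the $\mathrm{SL}(2,\mathbb{R})$-element $h_1 h_2^{-1}$. The defining property of $\theta(z)$ in $r(z) = \min\{\theta(z), \tilde{r}(z), \mathrm{inj}(z)\}$ then forces $h_1 h_2^{-1} \notin B^{\mathrm{SL}(2,\mathbb{R})}_{\theta(z)}$. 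A left-Haar packing argument in $\mathrm{SL}(2,\mathbb{R})$ yields $|\mathcal{I}_\gamma| \lesssim \mathrm{Haar}(E_t)/\theta(z)^3 \leq \beta^2/\theta(z)^3$. Combining with the exponential lower bound $\theta(z) \geq r(z) \gtrsim e^{-C_5 \kappa_3 t}$ coming from the hypothesis $\mathrm{inj}(X,\omega) \geq c$ together with Theorem \ref{athreya_function} and Proposition \ref{inj_radius_proposition}, this gives $|\mathcal{I}_\gamma| \lesssim e^{Ct}$ for some $C$ depending only on the stratum.

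For the number of $\gamma$ with $\mathcal{I}_\gamma \neq \emptyset$, observe that $\gamma h \tilde{y}$ lies AGY-close to $\tilde{z}$; by Theorem \ref{theorem_norm_comparison} and the injectivity-radius bound, it is also Teichm\"uller-close to $\tilde{z}$. Since the Teichm\"uller diameter of $E_t \tilde{y}$ is at most $t/2 + O(1)$, the triangle inequality gives $d_{\mathcal{T}}(\gamma \tilde{y}, \tilde{y}) \leq t + O(1)$. A Teichm\"uller-metric lattice-point count for the $\mathrm{Mod}(S_2)$-action (as in Eskin-Mirzakhani-Rafi \cite{eskin2019counting}) then bounds the number of such $\gamma$ by $e^{\kappa' t}$ for a constant $\kappa'$ depending only on the stratum. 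Multiplying yields $|\mathcal{I}| \lesssim e^{(C+\kappa')t}$, which is absorbed into $e^{6\kappa_4 t}$ by taking $\kappa_4$ sufficiently large.

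The main obstacle is the careful bookkeeping of constants: tracking the dependence of $\theta(z)$ and $\tilde{r}(z)$ on $t$ through the chain of constants $\kappa_3, C_5$ from the injectivity-radius propositions, and identifying the precise exponential growth rate $\kappa'$ for the Teichm\"uller-metric lattice count on the non-homogeneous space $\mathcal{T}_2/\mathrm{Mod}(S_2)$. Both quantities are linear-exponential in $t$, so the exponent $6\kappa_4$ is attainable by choosing $\kappa_4$ to dominate $(C+\kappa')/6$.
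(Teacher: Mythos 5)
Your decomposition into $\mathrm{Mod}(S_2)$-fibers followed by a Teichm\"uller lattice-point count is a genuinely different route, but there is a substantive gap before you ever reach that point. When you assert $\theta(z)\geq r(z)\gtrsim e^{-C_5\kappa_3 t}$ ``coming from $\mathrm{inj}(X,\omega)\geq c$ together with Theorem \ref{athreya_function} and Proposition \ref{inj_radius_proposition},'' you are only controlling the $\tilde{r}(z)$ and $\mathrm{inj}(z)$ terms of $r(z)=\min\{\theta(z),\tilde{r}(z),\mathrm{inj}(z)\}$. The term $\theta(z)$---the ``no small $\mathrm{SL}(2,\mathbb{R})$-element carries one balanced slice point to another'' radius---is the quantity whose decay along $E_t$ is not controlled by injectivity-radius considerations alone. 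Bounding $\theta(hz)\gg e^{-4t}\theta(z)$ is the technical heart of the paper's proof and occupies most of it: one uses $\mathrm{SL}(2,\mathbb{R})$-invariance of the balanced/standard splitting, affineness of period charts, the AGY pushforward estimate (Lemma \ref{AGY_pushforward}), and the Su--Zhang and Kahn--Wright norm comparisons (Theorems \ref{theorem_norm_comparison}, \ref{Hodge_Teich_norm_comparison}). Your packing estimate $|\mathcal{I}_\gamma|\lesssim \mathrm{Haar}(E_t)/\theta(z)^3$ uses $\theta(z)$ directly, so absent a lower bound on $\theta(z)$ your count is not closed.

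Two further remarks. First, the mapping-class-group fibering and the Eskin--Mirzakhani--Rafi lattice count are unnecessary: the definition of $\theta(z)$ is already phrased in moduli space (not Teichm\"uller space), so distinct $w,w'\in I_{bal,t}(z)$ are automatically $\theta(z)$-separated in the $\mathrm{SL}(2,\mathbb{R})$-orbit without conditioning on $\gamma_h$, and a single Haar-measure packing against the slightly thickened set $E_{t+}\supset E_t$ suffices, which is exactly what the paper does. Second, $\mathrm{Haar}(E_t)$ is not $\leq\beta^2$: since $E_t=B_\beta\cdot a_t\cdot\{u_r:r\in[0,1]\}$ and $a_t u_r=u_{re^t}a_t$, the set spans a horocycle segment of $\mathrm{SL}(2,\mathbb{R})$-length $\sim e^t$, so $\mathrm{Haar}(E_t)\sim\beta^2e^{2t}$; this extra exponential factor is compatible with the final estimate only because $\kappa_4$ is taken large, but it should be tracked. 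Repairing the argument requires supplying the $\theta(hz)/\theta(z)$ estimate; once you have it, you may as well drop the $\gamma$-fibering and lattice count altogether and pack directly in $\mathrm{SL}(2,\mathbb{R})$ as in the paper.
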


\begin{proof}
We first show that, for any $h \in E_t$, $r(hz) \gg e^{-\kappa_4 t}r(z)$ for some $\kappa_4$ depending only on our stratum $\Omega_1\mathcal{M}_2(2)$. Note, first, that since the $a_t$ action can only shorten a saddle connection by at most a factor of $e^{-t}$, $\mathrm{inj}(hz) \geq e^{-t}\mathrm{inj}(z)$, and so certainly $\mathrm{inj}(hz) \gg e^{-\kappa_4 t}\mathrm{inj}(z)$ for any $\kappa_4 > 1$. For the case of $\tilde{r}(z)$, we have by Proposition \ref{inj_radius_proposition} that $\tilde{r}(hz) \gg e^{-C_5\kappa_3 t} r(z)$, and consequently, treating the case of $\theta(hz)$ suffices.\\

We now show that $\theta(hz) \gg e^{-4t}\theta(z)$. Specifically, we will show that $\theta(hz) \geq \sqrt{2}/(4\sqrt{c\pi})e^{-2-2t}\theta(z)$. Unraveling the definition of $\theta(z)$, we know that for any $\delta>0$ there exist $w,w' \in T^{bal}_{z}$ so that $\mathrm{exp}_{z}(w) \in E_t \cdot (X,\omega)$ and $\mathrm{exp}_{z}(w') \in E_t \cdot (X,\omega)$, with $\mathrm{exp}_{z}(w') = h \cdot \mathrm{exp}_{z}(w)$ and $h \in B^{\mathrm{SL}(2,\mathbb{R})}_{\theta(z)-\delta}$. Recall that the decomposition of the tangent bundle into balanced and standard subbundles is $\mathrm{SL}(2,\mathbb{R})$-invariant. Therefore, $h_{*}(w)$ and $h_{*}(w') \in T^{bal}_{hz}$. We may assume $\mathrm{exp}_{hz}(h_{*}w)$ and $\mathrm{exp}_{hz}(h_{*}w')$ are well-defined inside a local period coordinate chart; if they are not, then $\theta(hz) > \tilde{r}(hz) \gg e^{-C_5\kappa_3 t} \tilde{r}(z) > e^{-C_5\kappa_3 t}r(z)$, and by the definition of $r(hz):= \mathrm{min} \left\{\theta(hz),\tilde{r}(hz),\mathrm{inj}(hz)\right\}$, $r(hz) \gg e^{-C_5\kappa_3 t}r(z)$. Taking $\kappa_4 = \\ \mathrm{max}\left\{1,C_5\kappa_3\right\}$, we are done. \\

Since $h_{*}(w)$ and $h_{*}(w') \in T^{bal}_{hz}$, therefore are transversal to the $\mathrm{SL}(2,\mathbb{R})$-orbit of $(X,\omega)$, we deduce $\mathrm{exp}_{hz}(h_{*}w)$ and $\mathrm{exp}_{hz}(h_{*}w')$ intersect the orbit $E_t\cdot(X,\omega)$. This is an affine subspace in this local period coordinate chart. The affine geodesic (straight line in local period coordinates) connecting $\mathrm{exp}_{hz}(h_{*}w)$ and $\mathrm{exp}_{hz}(h_{*}w')$ lies inside this affine subspace, and altogether we have $\mathrm{exp}_{hz}(h_{*}w) = h'\cdot \mathrm{exp}_{hz}(h_{*}w')$ with $h' \in E_t$. We would like an upper bound on $||h_{*}(w') - h_{*}(w)||$ in terms of the quantity $\theta(hz)(4/\pi\sqrt{2})$. By Equation $\ref{norm_comparison_equation}$, we have $d_{\mathcal{T}}(\mathrm{exp}_{h\cdot z}(h_{*}w'), \mathrm{exp}_{h\cdot z}(h_{*}w)) \geq (c/4\sqrt{2})||h_{*}(w') - h_{*}(w)||$, and
so 

\begin{align}
\label{AGY_theta}
||h_{*}(w') - h_{*}(w)|| \leq e^{t}(4\sqrt{2}/c)\theta(hz).
\end{align}
We now apply Lemma \ref{AGY_pushforward} to see that

\begin{align}
\label{contraction_vector_difference}
||h_{*}(w')-h_{*}(w)|| &= ||h_{*}(w'-w)|| \nonumber \\
&\geq e^{-2-2t}||w'-w||.
\end{align}
We combine Equations \ref{AGY_theta} and \ref{contraction_vector_difference}, and a similar estimate as Equation $\ref{AGY_theta}$ for $||w'-w||$ to obtain
$\theta(hz) \geq e^{-2-4t}(4\sqrt{2}/c)(\sqrt{c\pi}/16)(\theta(z) -\delta))$. Choosing $\delta$ to be arbitrarily close to $0$, we obtain

\begin{align}
\theta(hz) \geq \sqrt{2}/(4\sqrt{c\pi})e^{-2-4t}\theta(z).
\end{align}
Taking $\kappa_4 = \mathrm{max}\left\{C_5\kappa_3,4\right\}$, we are done. Denote by $m_{\mathrm{SL}(2,\mathbb{R})}$ the Haar measure on $\mathrm{SL}(2,\mathbb{R})$. We observe that for any $z \in E_t \cdot (X,\omega)$ and $w \in I_{bal,t}(z)$, we have

\begin{displaymath}
B^{\mathrm{SL}(2,\mathbb{R})}_{\sqrt{2c}/(4\sqrt{\pi})e^{-2-\kappa_4 t}}\mathrm{exp}_z(w) \subset E_{t+}\cdot (X,\omega)
\end{displaymath}
where $E_{t+}:= B^{\mathrm{SL}(2,\mathbb{R})}_{3\sqrt{2c}/(4\sqrt{\pi})e^{-2-\kappa_4t}}\cdot E_t$. We now have, by the definition of $I_{bal,t}(z)$, that for distinct $w,w' \in I_{bal,t}(z)$

\begin{equation}
\label{size_of_sheets}
B^{\mathrm{SL}(2,\mathbb{R})}_{\sqrt{2c}/(4\sqrt{\pi})e^{-2-\kappa_4 t}}\mathrm{exp}_z(w) \cap B^{\mathrm{SL}(2,\mathbb{R})}_{\sqrt{2c}/(4\sqrt{\pi})e^{-2-\kappa_4 t}}\mathrm{exp}_z(w') = \emptyset.
\end{equation}
Since $m_{\mathrm{SL}(2,\mathbb{R})}(E_{t+}) \ll e^{2t}$ and  $m_{\mathrm{SL}(2,\mathbb{R})}( B^{\mathrm{SL}(2,\mathbb{R})}_{ce^{-2-\kappa_4 t}\pi\sqrt{2}/4}) \gg e^{-3\kappa_4 t}$, the claim is proven.

\end{proof}

The following elementary observation will be convenient for us. \\

\begin{prop}
\label{polynomial_bound}

 \ Let $c \in \mathbb{R}, D > 0, \eta > 0$. Assume $\mathrm{max} \left\{a_i \right\} \gg \eta^2$. The set of all $x \in \mathbb{R}$ such that

\begin{equation}
\label{poly_inequality}
| a_1 e^{-Dt} + a_2(x-c) - a_3 e^{Dt}(x-c)^2| \leq Ce^{(-D+1)t}
\end{equation}
has measure $\ll \eta^{-4}e^{(\frac{-D+1}{2})t}$.

\end{prop}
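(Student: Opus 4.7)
The plan is to reduce the estimate to a sublevel-set bound for a real quadratic polynomial. After the substitution $y = x - c$, the inequality becomes $|p(y)| \leq \epsilon$ with
\[
p(y) \;=\; -a_3 e^{Dt} y^2 + a_2 y + a_1 e^{-Dt}, \qquad \epsilon \;=\; C e^{(1-D)t},
\]
and we must bound the Lebesgue measure of $\mathcal{B} := \{ y \in \mathbb{R} : |p(y)| \leq \epsilon\}$. I would proceed by a pigeonhole on which of the three coefficients realizes the hypothesis $\max_i |a_i| \gg \eta^2$, and invoke in each case the standard sublevel estimate for polynomials of degree at most two.

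The principal case is $|a_3| \gg \eta^2$, where $p$ is genuinely quadratic with leading coefficient of modulus $\gg \eta^2 e^{Dt}$. Factoring $p(y) = -a_3 e^{Dt}(y - r_{+})(y - r_{-})$ over $\mathbb{C}$ and using that $|y - \mathrm{Re}\,r_{\pm}| \leq |y - r_{\pm}|$ for real $y$, the condition $|p(y)| \leq \epsilon$ forces
\[
\min\!\bigl\{ |y - \mathrm{Re}\,r_{+}|,\; |y - \mathrm{Re}\,r_{-}| \bigr\} \;\leq\; \sqrt{\epsilon/(|a_3|e^{Dt})}.
\]
Hence $\mathcal{B}$ is covered by two intervals of length $2\sqrt{\epsilon/(|a_3|e^{Dt})}$, giving
\[
|\mathcal{B}| \;\leq\; 4\sqrt{\epsilon/(|a_3|e^{Dt})} \;\ll\; \eta^{-1} e^{(1-2D)t/2} \;\leq\; \eta^{-4} e^{(1-D)t/2},
\]
since $\eta \leq 1$ and $D > 0$. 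Already in this case the bound is strictly stronger than claimed; the exponent $\eta^{-4}$ is slack.

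When $|a_3| \lesssim \eta^2$, by the pigeonhole either $|a_2| \gg \eta^2$ or $|a_1| \gg \eta^2$. If $|a_2| \gg \eta^2$, I would write $p = L + q$ with $L(y) = a_1 e^{-Dt} + a_2 y$ and $q(y) = -a_3 e^{Dt} y^2$, and split along the interval $J = \{|q(y)| \leq \epsilon\}$. On $J$ the bad set sits inside $\{|L| \leq 2\epsilon\}$, a segment of length at most $4\epsilon/|a_2| \ll \eta^{-2}e^{(1-D)t} \leq \eta^{-4} e^{(1-D)t/2}$, whereas outside $J$ the quadratic term dominates and the principal-case root argument confines $\mathcal{B}$ to two intervals around $\mathrm{Re}\,r_{\pm}$ of total length $\lesssim \epsilon/|p'(r_{\pm})|$; when the discriminant $a_2^2 + 4 a_1 a_3$ is comparable to $a_2^2$, this linearises to $\lesssim \epsilon/|a_2|$, again within the target. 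The final subcase $|a_1| \gg \eta^2$ with $|a_2|, |a_3| \lesssim \eta^2$ is degenerate: either $|a_1|e^{-Dt} > 2\epsilon$, in which case $\mathcal{B}$ is empty on a neighborhood of the origin and is pushed to regions where a subleading term again controls the variation, or $|a_1|e^{-Dt} \leq 2\epsilon$, placing us back in the previous subcase.

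The main obstacle I anticipate is the last subcase, where $p$ is only nominally quadratic and its small values arise from near-cancellation of all three terms; the estimate there rests on choosing the right interval on which to apply either the linear or the quadratic sublevel bound. The generous $\eta^{-4}$ factor on the right-hand side is precisely what affords enough slack to absorb these crude pigeonhole bounds, together with the exponential suppression $e^{(1-D)t/2}$ from the gap $D > 0$.
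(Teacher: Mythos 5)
The paper provides no proof of Proposition~\ref{polynomial_bound}; it is stated as an elementary observation and invoked directly in the proof of Theorem~\ref{main_theorem}, so there is no in-text argument to compare against.

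Your principal case ($|a_3|\gg\eta^2$) is correct and tight: factoring, passing to real parts, and extracting two intervals of radius $\sqrt{\epsilon/(|a_3|e^{Dt})}$ gives a bound $\ll\eta^{-1}e^{(1-2D)t/2}$, strictly stronger than what is claimed (granting $\eta\leq 1$ and $D\geq 0$). The gap is in the degenerate regime $|a_3|\lesssim\eta^2$, $|a_2|\gg\eta^2$, which you flag but do not close. Your split along $J=\{|q|\leq\epsilon\}$ only bounds the piece inside $J$; outside $J$ you invoke the root argument and explicitly restrict to ``when the discriminant $a_2^2+4a_1a_3$ is comparable to $a_2^2$.'' When instead $a_2^2+4a_1a_3\approx 0$ (near double root), $p(y)=-a_3e^{Dt}(y-y_*)^2$ up to a small error, and the sublevel set is an interval of length about $2\sqrt{\epsilon/(|a_3|e^{Dt})}$. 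This is \emph{not} controlled by $\eta^{-4}$: taking $a_2$ fixed, $a_3\to 0$, and $a_1=-a_2^2/(4a_3)\to\infty$ keeps $\max|a_i|\gg\eta^2$ but sends the measure to infinity, so the proposition is actually false as literally stated. The $\eta^{-4}$ slack, which you lean on, does not by itself absorb this.

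What rescues it is hidden in the application: at the point of use (Equations~\ref{matrix_inequality_1}--\ref{polynomial_bound_separation}) the coefficients are matrix entries satisfying $|a_{i,s}|\leq 10e^{2t}$, and $D=7$. The $a_1$-bound forces $|a_3|\gg\eta^4 e^{-2t}$ in the double-root regime, and then $2\sqrt{\epsilon/(|a_3|e^{Dt})}\ll\eta^{-2}e^{(3-2D)t/2}\leq\eta^{-4}e^{(1-D)t/2}$ precisely because $D\geq 2$ and $\eta\leq 1$. To make your proof complete you should add these hypotheses (an upper bound $|a_i|\lesssim e^{2t}$ on all coefficients, and $D$ at least $2$, say) and run the near-double-root estimate through explicitly, rather than appealing to the $\eta^{-4}$ factor as slack; similar caveats ($D\geq 1$, $\eta\leq 1$) are also needed to justify the inequality $\eta^{-2}e^{(1-D)t}\leq\eta^{-4}e^{(1-D)t/2}$ in your linear subcase.
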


\begin{proof}[Proof of Theorem \ref{main_theorem}]
In the proceeding discussion, we take $\beta = e^{-\frac{1}{6\kappa_4 + 100}t}$. Lemma $\ref{Main_quant_divergence}$, guarantees the existence constants $\alpha, C_3, C_4 >0$ so that for any $0 < \epsilon, \beta < 1$ and $(X,\omega) \in \Omega_1\mathcal{M}_g(\sigma)$ the following holds. Let $I \subset [-10,10]$ be an interval and assume $|I|\geq \beta$.
Then,
\begin{equation}
\label{non_divergence_in_proof}
{\big|\left\{s \in I : \mathrm{inj}\left(a_t u_s (X,\omega)\right) < {\epsilon}\right\}\big|} < C_4  {\epsilon}^{\alpha}|I|,
\end{equation}
so long as $t \geq \big\lvert \mathrm{log}\left(\beta  \ \mathrm{inj}(X,\omega)\right)^{-1}\big\rvert + C_3$. We will assume $t \geq \mathrm{max}\left\{ \big\lvert \mathrm{log}\left(\beta  \ \mathrm{inj}(X,\omega)\right)^{-1}\big\rvert + C_3, \mathrm{log}(1/C(x)) \frac{1}{\epsilon_x} \right\}$ where $C(x)$ and $\epsilon_x$ depend only on $(X,\omega)$ as per the discussion in Subsection $\ref{Norm_comparisons_closing}$. We fix such an interval $I$ that contains the subinterval $[0,1]$ and assume $t$ satisfies the above lower bound for the remainder of the proof. Let $s' \in [0,1]$ be such that $(X',\omega') = a_tu_{s'}(X,\omega) \in \Omega_1\mathcal{M}_{\beta}$, using the notation of Definition $\ref{inj_radius_def}$. We will show that if option $2$ in Theorem $\ref{main_theorem}$ fails, then the following holds. For every $(X',\omega')$ there exists an interval $I(\omega') \subset [0,1]$ with $\big|[0,1] \big \backslash I(\omega') \big| \leq 2C_3\beta^{\alpha}$ so that for all $s\in I(\omega')$

\begin{enumerate}[(a)]
\item $a_{7t}u_s(X',\omega') \in \Omega_1\mathcal{M}_{\beta}$,
\item $h \rightarrow a_{7t}u_s(X',\omega')$ is an injective map $E_t \rightarrow \Omega_1\mathcal{M}_2(2)$,
\item for all $z \in E_t\cdot a_{7t}u_s(X',\omega')$, we have $f_t(z) \leq e^{(6\kappa_4 + 100)t}$.
\end{enumerate}
Note that $u_sa_t = a_tu_{se^{-t}}$, so that 

\begin{align*}
a_{7t}u_sa_tu_{s'}(X,\omega) &=\\
&= a_{7t}a_tu_{se^{-t}}u_{s'}(X,\omega)\\ &=
a_{8t}u_{s' + se^{-t}}(X,\omega).
\end{align*}
which implies that option $1$ in Theorem $\ref{main_theorem}$ holds with $D=6\kappa_4 + 100$. Assume to the contrary that there exists $(X',\omega')$ and a subset $I'_{bad} \subset [0,1]$ with $\big|I'_{bad}\big| > 2C_3\beta^{\alpha}$ so that one of $(a), (b), (c)$ is false. By $\ref{non_divergence_in_proof}$, ${\big|\left\{s \in I : \mathrm{inj}\left(a_{7t} u_s (X',\omega')\right) < {\beta}\right\}\big|} < C_3  {\beta}^{\alpha}$, there exists $I_{bad} \subset [0,1]$ with $\big| I_{bad} \big| > 1 - C_3\beta^{\alpha} > 2C_3\beta^{\alpha} - C_3\beta^{\alpha} = C_3\beta^{\alpha}$ so that for all $s \in I_{bad}$, $a_{7t}u_s(X',\omega') \in \Omega_1\mathcal{M}_{\beta}$ but one of $(b)$ and $(c)$ is false. We will show that this implies option $2$ of Theorem $\ref{main_theorem}$.\\

We will prove the statement for option $(c)$ as option $(b)$ is similar. By definition of $I_{bad}$, we have that for any $s\in I_{bad}$, there exists a $z \in E_t\cdot a_{7t}u_s(X',\omega')$ so that $f_t(z) > e^{(6\kappa_4 +100)t}$. Since $a_{7t}u_s(X',\omega') \in \Omega_1\mathcal{M}_{\beta}$, we have, by the proof of Lemma $\ref{sheet_estimate}$, 

\begin{displaymath}
    r(ha_{7t}u_s(X',\omega')\gg \beta e^{-2t}
\end{displaymath}
for all $h\in E_t$. By the definition of $f_t$, if $I_{bal,t}(z) = \left\{0\right\}$, then $f_t(z) \ll \beta^{-1}e^{2t}$. Since we have assumed $t \geq \big\lvert \mathrm{log}\left(\beta  \ \mathrm{inj}(X,\omega)\right)^{-1}\big\rvert + C_3$, we have 

\begin{align*}
f_t(z) &\ll \beta^{-1}e^{2t}\\
&\leq \beta^{-5}e^{2C_3} \ll \beta^{-5}.
\end{align*}
If we take $t$ large enough, this contradicts $f_t(z)> e^{(6\kappa_4 + 100)t}$, so we may assume $I_{bal,t}(z) \neq \left\{0\right\}$. By Lemma $\ref{sheet_estimate}$, $\#I_{bal,t}(z) \ll e^{6\kappa_4 t}$. By the definition of $I_{bal,t}(z)$, there exists $w \in T_{z}^{bal}$ with $||w|| \ll e^{-100t}$. \\
Altogether, there exists some $\bar{z} \in E_t \cdot a_{7t}u_s(X',\omega')$ so that $\bar{z} = \mathrm{exp}_z(\omega)$ with $||w|| \ll e^{-100t}$, and for some $\mathbf{h}_2 \in E_t$ we have $\bar{z} = \mathbf{h}_2 a_{7t}u_s(X',\omega')$. Let $\mathbf{h}_1 \in E_t$ be such that $z = \mathbf{h}_1 a_{7t}u_s(X',\omega')$ and note that $\bar{z} = \mathbf{h}_2 \mathbf{h}^{-1}_1 z$. Henceforth, for such $\mathbf{h}_2$, $\mathbf{h}_2$, we will use the shorthand $\mathbf{h}_s:= \mathbf{h}_2 \mathbf{h}^{-1}_1$.\\

$\textbf{Distinct conjugacy classes of mapping class group elements}$ \\

The structure of the remainder of the proof is an adaptation of the proof of Proposition 6.1 in $\cite{lindenstrauss2022polynomial1}$ to moduli space. The proof is quite similar, though requires modifications significant enough to merit a detailed treatment here. \\

Since $|I_{bad}| > C_3\beta^{\alpha}$, one can find intervals $J, J' \subset [0,1]$ with $d(J,J') \gg \beta^{\alpha}$, $|J|, |J'| \gg \beta^{\alpha}$ and $|J \cap I_{bad}| \geq \beta$, $|J' \cap I_{bad}| \geq \beta$. Set $J_{\beta} = J \cap I_{bad}$. We will further use the shorthand $\mathbf{p}_s:=a_{7t}u_s$. Note that we have the following inequalities.

\begin{align*}
d_{\mathcal{T}}\left((X',\omega'), (\mathbf{p}_s)^{-1}\mathbf{h}_s\mathbf{p}_s(X',\omega')\right) &\ll e^{14t}d_{\mathcal{T}}\left(\mathbf{p}_s(X',\omega'), \mathbf{h}_s\mathbf{p}_s(X',\omega')\right) \\
&\ll e^{-84t}.
\end{align*}
Thus, there exist lifts $(\tilde{X}',\tilde{\omega}')$ of $(X',\omega')$ and, for each lift, an element ${\mathbf{\gamma}}_s$ in the mapping class group so that

\begin{equation}
\label{smallinequality}
d_{\mathcal{T}}\left((\tilde{X}',\tilde{\omega}'),(\mathbf{p}_s)^{-1}\mathbf{h}_s\mathbf{p}_s {\mathbf{\gamma}}_s(\tilde{X}',\tilde{\omega}') \right) \ll e^{-84t}
\end{equation}
using the fact the mapping class group commutes with $\mathrm{SL}(2,\mathbb{R})$. Suppose that $(\tilde{\tilde{X}}', \tilde{\tilde{\omega}}')$ is a different lift of $(X,\omega)$, $\gamma \cdot (\tilde{\tilde{X}}', \tilde{\tilde{\omega}}') = (\tilde{X}',\tilde{\omega}')$, for some $\gamma$. Then,

\begin{displaymath}
d_{\mathcal{T}}\left((\tilde{\tilde{X}}',\tilde{\tilde{\omega}}'), (\mathbf{p}_s)^{-1}\mathbf{h}_s\mathbf{p}_s \gamma {\mathbf{\gamma}}_s (\gamma)^{-1}(\tilde{X}',\tilde{\omega}')\right) = d_{\mathcal{T}}\left((\tilde{X}',\tilde{\omega}'),(\mathbf{p}_s)^{-1}\mathbf{h}_s\mathbf{p}_s {\mathbf{\gamma}}_s(\tilde{X}',\tilde{\omega}') \right).
\end{displaymath}
So, to each $s$, we associate a conjugacy class $[\mathbf{\gamma}_s]$. Suppose that for $s \neq s'$, $[\mathbf{\gamma}_s] = [\mathbf{\gamma}_{s'}]$. Then, for a lift $(\tilde{X}', \tilde{\omega}')$, we have

\begin{equation}
\label{gamma}
d_{\mathcal{T}}\left((\tilde{X}',\tilde{\omega}'),(\mathbf{p}_s)^{-1}\mathbf{h}_s\mathbf{p}_s {\mathbf{\gamma}}_s(\tilde{X}',\tilde{\omega}') \right) = d_{\mathcal{T}}\left((\tilde{X}',\tilde{\omega}'),(\mathbf{p}_{s'})^{-1}\mathbf{h}_{s'}\mathbf{p}_{s'} {\mathbf{\gamma}}_{s}(\tilde{X}',\tilde{\omega}') \right).
\end{equation}
By the triangle inequality, and Equation \ref{smallinequality}  we have

\begin{equation}
\label{fundamental_inequality}
d_{\mathcal{T}}\left((\mathbf{p}_{s'})^{-1}\mathbf{h}_{s'}\mathbf{p}_{s'} {\mathbf{\gamma}}_{s}(\tilde{X}',\tilde{\omega}'),(\mathbf{p}_s)^{-1}\mathbf{h}_s\mathbf{p}_s {\mathbf{\gamma}}_s(\tilde{X}',\tilde{\omega}') \right) \ll e^{-84t}.
\end{equation}
By the definition of Teichm{\"u}ller distance, and using the first order approximation $\mathrm{log}(x) \sim (x-1)$ for $x$ near one, we obtain from Equation $\ref{fundamental_inequality}$

\begin{equation}
\label{4.contradiction_inq}
||(\mathbf{p}_{s'})^{-1}\mathbf{h}_{s'}\mathbf{p}_{s'}(\mathbf{p}_{s})^{-1}\mathbf{h}_{s}\mathbf{p}_{s}|| \ll e^{-68t}.
\end{equation}
By repeated applications of the facts on the matrix norm in Subsection $\ref{subsection_matrix_norm}$, we have 

\begin{equation}
\label{4.contradiction_inq2}
\frac{||\mathbf{p}_{s'}(\mathbf{p}_s)^{-1} \mathbf{h}_{s} \mathbf{p}_{s} (\mathbf{p}_{s'})^{-1}||}{||\mathbf{h}_{s'}||} \ll e^{14t}||(\mathbf{p}_{s'})^{-1}\mathbf{h}_{s'}\mathbf{p}_{s'}(\mathbf{p}_{s})^{-1}\mathbf{h}_{s}\mathbf{p}_{s}||.
\end{equation}
By proper discontinuity of the action of the mapping class group, we may find $\chi > 0$ be such that for any non-identity (conjugacy class) $\gamma_s$ we have for $\mathbf{h}_s = \begin{bmatrix}
a_{1,s} & a_{2,s} \\
a_{3,s} & a_{4,s} 
\end{bmatrix}$,
where $|a_{i,s}| \leq 10e^{2t}$,

\begin{equation}
\label{matrix_inequality_1}
||(\mathbf{p}_s)^{-1}\mathbf{h}_s (\mathbf{p}_s) - I|| = \Bigg\lvert \Bigg\lvert u_{-s} \begin{bmatrix}
a_{1,s} & e^{-7t}a_{2,s} \\
e^{7t}a_{3,s} & a_{4,s}  
\end{bmatrix} u_{s} - I \Bigg\rvert \Bigg\rvert \geq 20 \beta^{2\alpha}\chi.
\end{equation} 
From this, we conclude $\mathrm{max} \left\{e^{7t}|a_{3,s}|, |a_1 - 1|, |a_4 - 1| \right\} \gg \beta^{2\alpha}$. Set $\tau = e^{7t}(s'-s)$. We compute

\begin{align}
\begin{split}
\label{matrix_computation}
u_{\tau}\mathbf{h}_s u_{-\tau} &= \mathbf{p}_{s'}(\mathbf{p}_s)^{-1} \mathbf{h}_{s} \mathbf{p}_{s} (\mathbf{p}_{s'})^{-1} \\
&= \begin{bmatrix}
a_{1,s} + a_{3,s}\tau & a_{2,s} + (a_{4,s} - a_{1,s})\tau - a_{3,s}\tau^2 \\
a_{3,s} & a_{4,s} - a_{3,s}\tau
\end{bmatrix}.
\end{split}
\end{align}
By Proposition \ref{polynomial_bound}, and the fact that $\mathrm{max} \left\{e^{7t}|a_{3,s}|, |a_1 - 1|, |a_4 - 1| \right\} \gg \beta^{\alpha}$, we may conclude that for every $s \in J_{\beta}$, the set of $s' \in J_{\beta}$ so that

\begin{equation}
\label{polynomial_bound_separation}
|a_{2,s}e^{-7t} + (a_4 - a_1)(s'-s) - a_3e^{7t}(s'-s)^2| \leq e^{-6t}
\end{equation}
is of measure $\ll \beta^{-2\alpha}e^{-3t}$. If we denote by $J_{\beta,s}$ the set of $s' \in J_{\beta}$ for which Equation $\ref{polynomial_bound_separation}$ is valid, and we let $s' \in J_{\beta} \setminus J_{\beta,s}$ then for all such $s'$ we compute

\begin{displaymath}
\frac{||u_\tau \mathbf{h}_s u_{-\tau}||}{||\mathbf{h}_{s'}||} \gg e^{-2t}
\end{displaymath}
using Equation $\ref{matrix_computation}$ and the fact that $||\mathbf{h}_{s'}|| \ll e^{2t}$. This is in contradiction to  Equations $\ref{4.contradiction_inq}$ and $\ref{4.contradiction_inq2}$. We conclude that for $s \in J_{\beta}$, the set of $s' \in J_{\beta}$ for which $[\gamma_s] = [\gamma_{s'}]$ has measure $\ll \beta^{-2\alpha}e^{-3t}$. Therefore, the set of $s' \in J_{\beta}$ for which $(\mathbf{p}_s)^{-1} \mathbf{h}_{s} \mathbf{p}_s = (\mathbf{p}_{s'})^{-1} \mathbf{h}_{s'} \mathbf{p}_{s'}$ is of measure $\ll \beta^{-2\alpha}e^{-3t}$. Since $||(\mathbf{p}_s)^{-1} \mathbf{h}_{s} \mathbf{p}_s|| \ll e^{9t}$, the set

\begin{displaymath}
\left\{ (\mathbf{p}_s)^{-1} \mathbf{h}_{s} \mathbf{p}_s, \ s \in J_{\beta} \right\}
\end{displaymath}
has cardinality $\gg e^{2.9t}$. \\

\textbf{Existence of hyperbolic element} \\

We would like to show the group $\mathbf{G}$ generated by 
$\langle (\mathbf{p}_s)^{-1} \mathbf{h}_{s} \mathbf{p}_s, \ s \in I_{bad} \rangle$ cannot be unipotent; proceeding by contradiction, assume that 

\begin{equation}
\label{unipotent_conjugation}
(\mathbf{p}_s)^{-1} \mathbf{h}_s \mathbf{p}_s = u_{-s} \begin{bmatrix}
a_{1,s} & e^{-7t}a_{2,s} \\
e^{7t}a_{3,s} & a_{4,s}  
\end{bmatrix} u_{s} \in gUg^{-1}
\end{equation}
for all $s \in I_{bad}$, where $U = \left\{\begin{bmatrix}
1 & r \\
0 & 1  
\end{bmatrix}, \, r \in \mathbb{R}\right\}$ and $g \in \mathrm{SL}(2,\mathbb{R})$. Since $\mathbf{G}$ is unipotent, we know

\begin{displaymath}
\big\lvert \left\{ \mathbf{p}_s)^{-1} \mathbf{h}_s \mathbf{p}_s: \ ||\mathbf{p}_s)^{-1} \mathbf{h}_s \mathbf{p}_s|| \leq e^{4t/3}\right\}\big\rvert \ll e^{8t/3}.
\end{displaymath}
We have just shown $\big\lvert \left\{ (\mathbf{p}_s)^{-1} \mathbf{h}_{s} \mathbf{p}_s, \ s \in J_{\beta} \right\} \big\rvert \gg e^{2.9t}$ so we can therefore conclude

\begin{equation}
\label{gamma_bound}
\big\lvert \left\{ \mathbf{p}_s)^{-1} \mathbf{h}_s \mathbf{p}_s: \ ||\mathbf{p}_s)^{-1} \mathbf{h}_s \mathbf{p}_s|| \geq e^{4t/3} \ \textrm{and} \ s \in J_{\beta} \right\} \big\rvert \gg e^{t/4}.
\end{equation}
Let $\beta^{2\alpha}\chi \leq \Psi \leq e^{4t/3}$. A computation shows that if 

\begin{displaymath}
||(\mathbf{p}_s)^{-1}\mathbf{h}_s (\mathbf{p}_s) - I|| = \Bigg\lvert \Bigg\lvert u_{-s} \begin{bmatrix}
a_{1,s} & e^{-7t}a_{2,s} \\
e^{7t}a_{3,s} & a_{4,s}  
\end{bmatrix} u_{s} - I \Bigg\rvert \Bigg\rvert \geq 20  \Psi
\end{displaymath}
then, for $t$ large enough 
\begin{equation}
\label{a3_bound}
|a_{3,s}| \geq \Psi e^{-7t}.
\end{equation}
By Equation $\ref{unipotent_conjugation}$, for each $s \in I_{bad}$, there is $r_0 \in \mathbb{R}$ so that

\begin{equation}
\label{first_Matrix_conj}
u_{-s}\begin{bmatrix}
a_{1,s} & e^{-7t}a_{2,s} \\
e^{7t}a_{3,s} & a_{4,s}
\end{bmatrix}u_{s}= g\begin{bmatrix}
1 & r_0 \\
0 & 1
\end{bmatrix}g^{-1}. 
\end{equation}
Let $s_0 \in J' \cap I_{bad}$. Setting $u_{s_0}g = \begin{bmatrix}
a & b \\
c & d
\end{bmatrix}$, Equation $\ref{first_Matrix_conj}$ gives

\begin{equation}
\label{second_Matrix_conj}
\begin{bmatrix}
a_{1,s_{0}} & e^{-7t}a_{2,s_{0}} \\
e^{7t}a_{3,s_{0}} & a_{4,s_{0}}
\end{bmatrix} = \begin{bmatrix}
1-acr_0 & a^2r_0 \\
-c^2r_0 & 1+acr_0
\end{bmatrix} 
\end{equation}
for some $r_0 \in \mathbb{R}$. Applying Equation $\ref{a3_bound}$ with $\Psi = \beta^{2\alpha}\chi$, we have $|a_{3,s_{0}}| \geq e^{-7t}\beta^{2\alpha}\chi$, and, since $g$ is fixed so its entries are bounded by absolute constants, we may compare matrix entries on the right and left hand sides to conclude $|a| \ll e^{-2.9t}$ and $|c| \gg 1$. Now, let $s \in J_{\beta}$. By Equation $\ref{gamma_bound}$, we may assume $||(\mathbf{p}_s)^{-1}\mathbf{h}_s\mathbf{p}_s|| \geq e^{4t/3}$. Set $s_1 = s - s_0$, $a'_{2,s} = e^{-7t}a_{2,s}$, $a'_{3,s} = e^{7t}a_{3,s}$. By Equation $\ref{a3_bound}$ applied with $\Psi = e^{4t/3}$, we deduce $|a'_{3,s}| \geq e^{4t/3}$. For the sake of once again comparing matrix entries, apply Equation $\ref{unipotent_conjugation}$ to obtain 

\begin{align*}
u_{-s_{1}}
\begin{bmatrix}
a_{1,s} & a'_{2,s} \\
a'_{3,s} & a_{4,s}
\end{bmatrix}u_{s_{1}} &= \begin{bmatrix}
a_{1,s} - s_1a'_{3,s} & a'_{2,s} + (a_{4,s} - a_{1,s})s_1 - a'_{3,s}(s_1)^2 \\
a'_{3,s} & a_{4,s} + s_1a'_{3,s}
\end{bmatrix} \\
 &=  \begin{bmatrix}
1-acr & a^2r \\
-c^2r & 1+acr
\end{bmatrix}
\end{align*}
where $r \in \mathbb{R}$. Compare matrix entries using $|a'_{3,s}| \geq e^{4t/3}$ and, by $d(J,J') \gg \beta^{\alpha}$, the inequality $\beta \leq \beta^{\alpha} \leq s_1 \leq 1$. We conclude $|a|^2$ and $|c|^2$ are comparable in size. This contradicts $|a| \ll e^{-2.9t}$ and $|c| \gg 1$ for $t$ large. \\

In light of the discussion in Subsection $\ref{transversal_separation}$, we may assume $(\mathbf{p}_s)^{-1}\mathbf{h}_s\mathbf{p}_s$ are not elliptic, and we have just demonstrated $\langle (\mathbf{p}_s)^{-1}\mathbf{h}_s\mathbf{p}_s, \ s \in I_{bad}\rangle$ is not unipotent. It is, in particular not generated by a single parabolic element. It must therefore contain two transverse parabolic elements, and by the ping-pong lemma must contain a hyperbolic element. Considering traces, there must be an $\mathbf{h}_s$ hyperbolic. \\ 

All in all, there exist $z,\bar{z} \in E_t \cdot a_{7t}u_s \cdot (X',\omega')$, $\bar{z} = \mathbf{h}_s\cdot z$ where $d_{\mathcal{T}}(z,\bar{z}) \ll e^{-80t}$.  By Theorem \ref{Hodge_Teich_norm_comparison}, $d_H(z,\bar{z}) \ll e^{-60t}$. By Theorem \ref{Anosov_closing_lemma}, there exists a point $z'$ fixed by $\mathbf{h}_s$ with $d_{\mathcal{T}}(z',z) \ll e^{-60t}$. In particular, by Equation $\ref{size_of_sheets}$, and the fact that $d_{\mathcal{T}}(z',z) \ll e^{-60t}$, $\gamma_s$ corresponding to $\mathbf{h}_s$ must be non-trivial. The translation length $l(\mathbf{h}_s)$ satisfies $e^{-t/2} \ll \mathrm{log}(l(\mathbf{h}_s)) \ll e^{20t}$. The diagonalizable element $\mathbf{h}_s$ is in the Veech group of $z'$ and by Theorem $\ref{Veech_hyperbolic}$, $z'$ lies on a Teichm{\"u}ller curve. Theorem $\ref{main_theorem}$ follows from this, taking $D \geq 6\kappa_4 + 100$.
\end{proof}

\bibliographystyle{plain}
\bibliography{bibliography.bib}

\end{document}